\font \sevenrm=cmr7
\font \fiverm=cmr5

\documentclass[11pt]{smfart}
\usepackage{txfonts}
\usepackage{amsfonts}
\usepackage{amssymb}
\usepackage{amsmath}
\usepackage{wasysym}
\usepackage{color}
\usepackage{graphicx}
\usepackage{xspace}
\usepackage{axodraw}

 \newcommand{\nc}{\newcommand}
 \nc{\butcher}{{\scriptstyle\circleright}}

\newenvironment{disarray}%
 {\everymath{\textstyle\everymath{}}\array}%
 {\endarray}

\newtheorem{thm}{Th\'eor\`eme}

\newtheorem{lem}[thm]{Lemme}
\newtheorem{prop}[thm]{Proposition}
\newtheorem{defn}{D\'efinition}
\newtheorem{rmk}[thm]{Remarque}

\nc{\comment}[1]{[[{\tt #1}]] }
\nc{\Cal}[1]{{\mathcal {#1}}}
\nc{\mop}[1]{\mathop{\hbox {\rm #1} }\nolimits}
\nc{\gmop}[1]{\mathop{\hbox {\bf #1} }\nolimits}

\nc{\smop}[1]{\mathop{\hbox {\sevenrm #1} }\nolimits}
\nc{\ssmop}[1]{\mathop{\hbox {\fiverm #1} }\nolimits}
\nc{\mopl}[1]{\mathop{\hbox {\rm #1} }\limits}
\nc{\smopl}[1]{\mathop{\hbox {\sevenrm #1} }\limits}
\nc{\ssmopl}[1]{\mathop{\hbox {\fiverm #1} }\limits}
\nc{\frakg}{{\frak g}}
\nc{\g}[1]{{\frak {#1}}}
\def \restr#1{\mathstrut_{\textstyle |}\raise-6pt\hbox{$\scriptstyle #1$}}
\def \srestr#1{\mathstrut_{\scriptstyle |}\hbox to
  -1.5pt{}\raise-4pt\hbox{$\scriptscriptstyle #1$}}
\nc{\wt}{\widetilde} \nc{\wh}{\widehat}

\nc{\redtext}[1]{\textcolor{red}{#1}}
\nc{\bluetext}[1]{\textcolor{blue}{#1}}

\nc\fleche[1]{\mathop{\hbox to #1 mm{\rightarrowfill}}\limits}
\nc{\ignore}[1]{}
\def\semi{\mathrel{\times}\kern -.85pt\joinrel\mathrel{\raise
    1.4pt\hbox{${\scriptscriptstyle |}$}}}
\nc\R{{\mathbb R}}
\nc\N{{\mathbb N}}
\nc\inver{^{-1}}
\nc\point{\hbox{\bf .}}
\nc\un{\hbox{\bf 1}}
\nc{\adi}{\atop\displaystyle}

\def\racine{{\scalebox{0.3}{ 
\begin{picture}(12,12)(38,-38)
\SetWidth{0.5} \SetColor{Black} \Vertex(45,-37){6}
\end{picture}}}\,}
  
 \def\arbrea{\,{\scalebox{0.15}{ 
  \begin{picture}(8,65) (370,-248)
    \SetWidth{2}
    \SetColor{Black}
    \Line(374,-244)(374,-200)
    \Vertex(374,-197){9}
    \Vertex(374,-244){12}
  \end{picture}
}}\,}

 \def\arbreba{\,{\scalebox{0.15}{ 
\begin{picture}(8,106) (370,-197)
    \SetWidth{2}
    \SetColor{Black}
    \Line(374,-193)(374,-146)
    \Vertex(374,-146){9}
    \Vertex(374,-194){12}
    \Line(374,-146)(374,-98)
    \Vertex(374,-98){9}
  \end{picture}
}}\,}

 \def\arbrebb{\,{\scalebox{0.15}{ 
  \begin{picture}(48,48) (349,-255)
    \SetWidth{2}
    \SetColor{Black}
    \Vertex(375,-252){12}
    \Line(375,-252)(395,-211)
    \Line(375,-252)(354,-211)
    \Vertex(353,-211){9}
    \Vertex(395,-211){9}
  \end{picture}
}}\,}

\def\arbreca{\,{\scalebox{0.15}{
\begin{picture}(8,156) (370,-147)
    \SetWidth{2}
    \SetColor{Black}
    \Line(374,-143)(374,-99)
    \Vertex(374,-96){9}
    \Vertex(374,-144){12}
    \Line(374,-96)(374,-45)
    \Vertex(374,-45){9}
    \Line(374,-45)(374,2)
    \Vertex(374,2){9}
  \end{picture}
}}\,}

\def\arbrecb{\,{\scalebox{0.15}{
\begin{picture}(48,94) (349,-255)
\SetWidth{2}
\SetColor{Black}
\Line(374,-205)(395,-165)
\Line(374,-205)(354,-165)
\Vertex(353,-165){9}
\Vertex(395,-165){9}
\Vertex(374,-205){9}
\Line(374,-252)(374,-205)
\Vertex(374,-252){12}
\end{picture}}}\,}

\def\arbrecc{\,{\scalebox{0.15}{
 \begin{picture}(48,98) (349,-205)
    \SetWidth{2}
    \SetColor{Black}
    \Vertex(375,-202){12}
    \Line(375,-202)(395,-163)
    \Line(375,-202)(354,-163)
    \Vertex(353,-163){9}
    \Vertex(395,-163){9}
    \Line(353,-163)(353,-111)
    \Vertex(353,-111){9}
  \end{picture}
}}\,}

\def\arbrecd{\,{\scalebox{0.15}{
\begin{picture}(48,52) (349,-251)
    \SetWidth{2}
    \SetColor{Black}
    \Vertex(376,-248){12}
    \Line(376,-248)(399,-205)
    \Line(376,-248)(353,-205)
    \Vertex(353,-205){9}
    \Vertex(399,-205){9}
    \Line(376,-248)(376,-205)
    \Vertex(376,-203){9}
  \end{picture}
 }}\,}

\def\arbreda{\,{\scalebox{0.15}{
\begin{picture}(8,204) (370,-99)
    \SetWidth{2}
    \SetColor{Black}
    \Line(374,-96)(374,-48)
    \Vertex(374,-48){9}
    \Vertex(374,-96){12}
    \Line(374,-48)(374,3)
    \Vertex(374,3){9}
    \Line(374,3)(374,53)
    \Vertex(374,53){9}
    \Line(374,53)(374,101)
    \Vertex(374,101){9}
  \end{picture}
}}\,}

\def\arbredb{\,{\scalebox{0.15}{
\begin{picture}(48,135) (349,-255)
    \SetWidth{2}
    \SetColor{Black}
    \Line(374,-164)(395,-126)
    \Line(374,-164)(354,-126)
    \Vertex(353,-126){9}
    \Vertex(395,-126){9}
    \Vertex(374,-164){9}
    \Line(374,-205)(374,-168)
    \Vertex(374,-207){9}
    \Line(374,-252)(374,-207)
    \Vertex(374,-252){12}
  \end{picture}
}}\,}

\def\arbredc{\,{\scalebox{0.15}{
 \begin{picture}(48,150) (349,-205)
    \SetWidth{2}
    \SetColor{Black}
    \Line(374,-148)(395,-111)
    \Line(374,-149)(354,-111)
    \Vertex(353,-111){9}
    \Vertex(395,-111){9}
    \Line(353,-111)(353,-59)
    \Vertex(353,-59){9}
    \Line(374,-202)(374,-149)
    \Vertex(374,-149){9}
    \Vertex(374,-202){12}
  \end{picture}
}}\,}

\def\arbredd{\,{\scalebox{0.15}{
 \begin{picture}(48,99) (349,-251)
    \SetWidth{2}
    \SetColor{Black}
    \Line(376,-199)(395,-164)
    \Line(376,-200)(353,-163)
    \Vertex(353,-162){9}
    \Vertex(399,-162){9}
    \Vertex(376,-160){9}
    \Vertex(376,-248){12}
    \Line(376,-245)(375,-200)
    \Line(376,-200)(376,-160)
    \Vertex(376,-200){9}
  \end{picture}
}}\,}

\def\arbrede{\,{\scalebox{0.15}{
 \begin{picture}(48,153) (349,-150)
    \SetWidth{2}
    \SetColor{Black}
    \Vertex(375,-147){12}
    \Line(375,-147)(395,-108)
    \Line(375,-146)(354,-108)
    \Vertex(353,-108){9}
    \Vertex(395,-108){9}
    \Line(353,-108)(353,-56)
    \Vertex(353,-56){9}
    \Line(353,-56)(353,-1)
    \Vertex(353,-1){9}
  \end{picture}
}}\,}

\def\arbredf{\,{\scalebox{0.15}{
\begin{picture}(48,98) (349,-205)
    \SetWidth{2}
    \SetColor{Black}
    \Vertex(375,-202){12}
    \Line(375,-202)(395,-163)
    \Line(375,-202)(354,-163)
    \Vertex(353,-163){9}
    \Vertex(395,-163){9}
    \Line(353,-163)(353,-111)
    \Vertex(353,-111){9}
    \Line(395,-163)(395,-111)
    \Vertex(395,-111){9}
  \end{picture}
}}\,}

\def\arbredz{\,{\scalebox{0.15}{
  \begin{picture}(68,88) (329,-215)
    \SetWidth{2}
    \SetColor{Black}
    \Vertex(375,-212){12}
    \Line(375,-212)(395,-173)
    \Line(375,-212)(353,-173)
    \Vertex(353,-173){9}
    \Vertex(395,-173){9}
    \Line(353,-173)(333,-131)
    \Line(353,-173)(374,-131)
    \Vertex(333,-131){9}
    \Vertex(374,-131){9}
  \end{picture}
}}\,}

\def\arbredg{\,{\scalebox{0.15}{
\begin{picture}(48,98) (359,-205)
    \SetWidth{2}
    \SetColor{Black}
    \Vertex(376,-202){12}
    \Line(376,-202)(399,-160)
    \Line(376,-202)(353,-160)
    \Vertex(353,-160){9}
    \Vertex(399,-160){9}
    \Line(376,-202)(376,-158)
    \Vertex(376,-158){9}
    \Vertex(353,-111){9}
    \Line(353,-160)(353,-111)
  \end{picture}
}}\,}

\def\arbredh{\,{\scalebox{0.15}{
 \begin{picture}(90,46) (330,-257)
    \SetWidth{2}
    \SetColor{Black}
    \Vertex(375,-254){12}
    \Line(375,-254)(395,-215)
    \Vertex(395,-215){9}
    \Line(375,-254)(334,-224)
    \Vertex(334,-224){9}
    \Line(375,-254)(355,-215)
    \Vertex(355,-215){9}
    \Line(375,-254)(418,-225)
    \Vertex(418,-225){9}
  \end{picture}
}}\,}

\def\arbreeb{\,{\scalebox{0.15}{
 \begin{picture}(48,153) (349,-150)
    \SetWidth{2}
    \SetColor{Black}
    \Vertex(375,-147){12}
    \Line(375,-147)(395,-108)
    \Line(375,-147)(354,-108)
    \Vertex(353,-108){9}
    \Vertex(395,-108){9}
    \Vertex(395,-56){9}
    \Line(353,-108)(353,-56)
    \Line(395,-108)(395,-56)
    \Vertex(353,-56){9}
    \Line(353,-56)(353,-1)
    \Vertex(353,-1){9}
  \end{picture}
}}\,}

\nc{\matulab}{\racine}
\nc{\matulac}{\arbrea}
\nc{\matulae}{\arbreba}
\nc{\matulag}{\arbrebb}
\nc{\matulaaa}{\arbreca}
\nc{\matulaag}{\arbrecb}
\def\matulaagmince{\,{\scalebox{0.15}{
\begin{picture}(48,94) (349,-255)
\SetWidth{2}
\SetColor{Black}
\Line(374,-205)(389,-165)
\Line(374,-205)(360,-165)
\Vertex(359,-165){9}
\Vertex(389,-165){9}
\Vertex(374,-205){9}
\Line(374,-252)(374,-205)
\Vertex(374,-252){12}
\end{picture}}}\,}
\nc{\matulaac}{\arbrecc}
\nc{\matulaai}{\arbrecd}
\nc{\matulaca}{\arbreda}
\nc{\matulaei}{\arbredb}
\nc{\matulada}{\arbredc}
\nc{\matulafg}{\arbredd}
\nc{\matulabi}{\arbrede}
\nc{\matulabc}{\arbredf}
\nc{\matuladc}{\arbredz}
\nc{\matulacg}{\arbredg}
\nc{\matulaec}{\arbredh}
\nc{\matuladg}{\arbreeb}

\def\matulafa{\,{\scalebox{0.15}{
\begin{picture}(48,98) (385,-205)
    \SetWidth{2}
    \SetColor{Black}
    \Vertex(375,-202){12}
    \Line(375,-202)(395,-163)
    \Line(375,-202)(353,-163)
    \Vertex(353,-163){9}
    \Vertex(395,-163){9}
    \Line(353,-163)(353,-111)
    \Vertex(353,-111){9}
    \Line(395,-163)(395,-111)
    \Vertex(395,-111){9}
    \Line(375,-202)(430,-163)
    \Vertex(430,-163){9}
\end{picture}
}}\,}

\def\matulaga{\,{\scalebox{0.15}{
\begin{picture}(48,98) (349,-205)
    \SetWidth{2}
    \SetColor{Black}
    \Vertex(375,-202){12}
    \Line(375,-202)(399,-161)
    \Line(375,-202)(353,-161)
    \Vertex(353,-161){9}
    \Vertex(399,-161){9}
    \Line(375,-202)(375,-157)
    \Vertex(375,-157){9}
    \Vertex(353,-115){9}
    \Line(353,-155)(353,-115)
    \Vertex(353,-70){9}
    \Line(353,-115)(353,-70)
  \end{picture}
}}\,}

\def\matulagc{\,{\scalebox{0.15}{
\begin{picture}(48,98) (379,-205)
    \SetWidth{2}
    \SetColor{Black}
    \Vertex(375,-202){12}
    \Line(375,-202)(395,-165)
    \Line(375,-202)(348,-164)
    \Vertex(348,-163){9}
    \Vertex(395,-163){9}
    \Line(348,-160)(348,-113)
    \Vertex(348,-111){9}
    \Line(395,-159)(380,-113)
    \Vertex(380,-113){9}
    \Line(395,-163)(415,-113)
    \Vertex(415,-113){9}
\end{picture}
}}\,}

\def\matulagi{\,{\scalebox{0.15}{
 \begin{picture}(48,153) (349,-150)
    \SetWidth{2}
    \SetColor{Black}
    \Vertex(375,-147){12}
    \Line(375,-147)(395,-108)
    \Line(375,-147)(354,-108)
    \Vertex(353,-108){9}
    \Vertex(395,-108){9}
    \Line(353,-108)(353,-56)
    \Vertex(353,-56){9}
    \Line(353,-56)(353,-10)
    \Vertex(353,-10){9}
    \Line(353,-10)(353,35)
    \Vertex(353,35){9}
  \end{picture}
}}\,}

\def\matulahc{\,{\scalebox{0.15}{
 \begin{picture}(48,150) (349,-205)
    \SetWidth{2}
    \SetColor{Black}
    \Line(374,-149)(395,-111)
    \Line(374,-149)(354,-111)
    \Vertex(353,-111){9}
    \Vertex(395,-111){9}
    \Line(353,-111)(353,-59)
    \Vertex(353,-59){9}
    \Line(374,-202)(374,-149)
    \Vertex(374,-149){9}
    \Vertex(374,-202){12}
    \Line(395,-111)(395,-59)
    \Vertex(395,-59){9}
  \end{picture}
}}\,}

\def\matulahi{\,{\scalebox{0.15}{
\begin{picture}(48,98) (379,-205)
    \SetWidth{2}
    \SetColor{Black}
    \Vertex(375,-202){12}
    \Line(375,-202)(399,-161)
    \Line(375,-202)(350,-163)
    \Vertex(350,-163){9}
    \Vertex(399,-161){9}
    \Line(375,-202)(375,-160)
    \Vertex(375,-161){9}
    \Vertex(425,-163){9}
    \Vertex(350,-110){9}
    \Line(375,-202)(425,-165)
    \Line(350,-161)(350,-110)
  \end{picture}
}}\,}

\def\matulaig{\,{\scalebox{0.15}{
 \begin{picture}(48,153) (349,-150)
    \SetWidth{2}
    \SetColor{Black}
    \Vertex(375,-147){12}
    \Line(375,-147)(395,-108)
    \Line(375,-147)(354,-108)
    \Vertex(353,-108){9}
    \Vertex(395,-108){9}
    \Vertex(395,-58){9}
    \Line(353,-105)(353,-58)
    \Line(395,-105)(395,-58)
    \Vertex(353,-58){9}
    \Line(353,-52)(353,-5)
    \Vertex(353,-5){9}
    \Line(395,-52)(395,-5)
    \Vertex(395,-5){9}
  \end{picture}
}}\,}

\def\matulaiglarge{\,{\scalebox{0.15}{
 \begin{picture}(48,153) (349,-150)
    \SetWidth{2}
    \SetColor{Black}
    \Vertex(375,-147){12}
    \Line(375,-147)(415,-108)
    \Line(375,-147)(333,-108)
    \Vertex(333,-108){9}
    \Vertex(415,-108){9}
    \Vertex(415,-58){9}
    \Line(333,-105)(333,-58)
    \Line(415,-105)(415,-58)
    \Vertex(333,-58){9}
    \Line(333,-52)(333,-5)
    \Vertex(333,-5){9}
    \Line(415,-52)(415,-5)
    \Vertex(415,-5){9}
  \end{picture}
}}\,}

\def\matulaaoa{\,{\scalebox{0.15}{
 \begin{picture}(58,150) (359,-205)
    \SetWidth{2}
    \SetColor{Black}
    \Line(374,-149)(395,-111)
    \Line(374,-149)(353,-111)
    \Vertex(353,-111){9}
    \Vertex(395,-111){9}
    \Line(353,-111)(353,-59)
    \Vertex(353,-59){9}
    \Line(395,-202)(374,-149)
    \Vertex(374,-149){9}
    \Vertex(395,-202){12}
    \Vertex(416,-149){9}
    \Line(395,-202)(416,-149)
  \end{picture}
}}\,}

\def\matulaaoc{\,{\scalebox{0.15}{
\begin{picture}(48,98) (389,-205)
    \SetWidth{2}
    \SetColor{Black}
    \Vertex(375,-202){12}
    \Line(375,-202)(395,-163)
    \Line(375,-202)(353,-163)
    \Vertex(353,-163){9}
    \Vertex(395,-163){9}
    \Line(353,-160)(353,-113)
    \Vertex(353,-113){9}
    \Line(395,-163)(395,-113)
    \Vertex(395,-113){9}
    \Line(375,-202)(430,-163)
    \Vertex(430,-163){9}
    \Line(430,-163)(430,-113)
    \Vertex(430,-113){9}
\end{picture}
}}\,}

\def\matulaaog{\,{\scalebox{0.15}{
 \begin{picture}(48,99) (379,-251)
    \SetWidth{2}
    \SetColor{Black}
    \Line(376,-201)(395,-162)
    \Line(376,-201)(354,-162)
    \Vertex(353,-162){9}
    \Vertex(399,-162){9}
    \Vertex(400,-201){9}
    \Vertex(400,-248){12}
    \Line(400,-248)(376,-201)
    \Line(400,-248)(424,-201)
    \Line(400,-248)(400,-201)
    \Vertex(376,-201){9}
    \Vertex(424,-201){9}
  \end{picture}
}}\,}

\def\matulaaoi{\,{\scalebox{0.15}{
 \begin{picture}(48,150) (349,-205)
    \SetWidth{2}
    \SetColor{Black}
    \Line(374,-149)(395,-111)
    \Line(374,-149)(353,-111)
    \Vertex(353,-111){9}
    \Vertex(395,-111){9}
    \Line(353,-111)(353,-59)
    \Vertex(353,-59){9}
    \Line(353,-59)(353,-15)
    \Vertex(353,-15){9}
    \Line(374,-202)(374,-149)
    \Vertex(374,-149){9}
    \Vertex(374,-202){12}
  \end{picture}
}}\,}

\def\matulaaac{\,{\scalebox{0.15}{
\begin{picture}(58,98) (379,-205)
    \SetWidth{2}
    \SetColor{Black}
    \Vertex(375,-202){12}
    \Line(375,-202)(395,-163)
    \Line(375,-202)(354,-163)
    \Vertex(353,-163){9}
    \Vertex(395,-163){9}
    \Line(353,-163)(353,-115)
    \Vertex(353,-115){9}
    \Line(395,-163)(395,-115)
    \Vertex(395,-115){9}
    \Line(375,-202)(430,-163)
    \Vertex(430,-163){9}
    \Line(353,-115)(353,-70)
    \Vertex(353,-70){9}
    \end{picture}
}}\,}

\def\matulaabg{\,{\scalebox{0.15}{
\begin{picture}(8,204) (370,-99)
    \SetWidth{2}
    \SetColor{Black}
    \Line(374,-95)(374,-51)
    \Vertex(374,-48){9}
    \Vertex(375,-96){12}
    \Line(374,-44)(374,0)
    \Vertex(374,3){9}
    \Line(374,6)(374,50)
    \Vertex(374,53){9}
    \Line(374,53)(374,95)
    \Vertex(374,95){9}
     \Line(374,95)(374,130)
    \Vertex(374,130){9}
  \end{picture}
}}\,}

\def\matulaaca{\,{\scalebox{0.15}{
 \begin{picture}(90,46) (330,-257)
    \SetWidth{2}
    \SetColor{Black}
    \Vertex(375,-254){12}
    \Line(375,-254)(395,-215)
    \Vertex(395,-215){9}
    \Line(375,-254)(334,-224)
    \Vertex(334,-224){9}
    \Line(375,-254)(355,-215)
    \Vertex(355,-215){9}
    \Line(375,-254)(418,-224)
    \Vertex(418,-224){9}
    \Line(375,-254)(375,-212)
    \Vertex(375,-212){9}
  \end{picture}
}}\,}

\def\matulaacg{\,{\scalebox{0.15}{
 \begin{picture}(48,153) (349,-150)
    \SetWidth{2}
    \SetColor{Black}
    \Vertex(375,-147){12}
    \Line(375,-147)(395,-108)
    \Line(375,-147)(354,-108)
    \Vertex(353,-108){9}
    \Vertex(395,-108){9}
    \Vertex(395,-56){9}
    \Line(353,-108)(353,-56)
    \Line(395,-108)(395,-56)
    \Vertex(353,-56){9}
    \Line(353,-56)(353,-5)
    \Vertex(353,-5){9}
    \Line(353,-5)(353,40)
    \Vertex(353,40){9}
  \end{picture}
}}\,}

\def\matulaaci{\,{\scalebox{0.15}{
 \begin{picture}(48,153) (349,-150)
    \SetWidth{2}
    \SetColor{Black}
    \Vertex(375,-147){12}
    \Line(375,-147)(395,-108)
    \Line(375,-147)(354,-108)
    \Vertex(353,-108){9}
    \Vertex(395,-108){9}
    \Line(353,-108)(353,-56)
    \Vertex(353,-56){9}
    \Line(353,-56)(333,-8)
    \Vertex(333,-8){9}
    \Line(353,-56)(373,-8)
    \Vertex(373,-8){9}
  \end{picture}
}}\,}

\def\matulaadi{\,{\scalebox{0.15}{
\begin{picture}(48,98) (349,-205)
    \SetWidth{2}
    \SetColor{Black}
    \Vertex(375,-202){12}
    \Line(375,-202)(395,-163)
    \Line(375,-202)(348,-163)
    \Vertex(348,-163){9}
    \Vertex(395,-163){9}
    \Line(348,-163)(348,-111)
    \Vertex(348,-111){9}
    \Line(348,-111)(348,-65)
    \Vertex(348,-65){9}
    \Line(395,-163)(380,-111)
    \Vertex(380,-111){9}
    \Line(395,-163)(415,-111)
    \Vertex(415,-111){9}
\end{picture}
}}\,}

\def\matulaaea{\,{\scalebox{0.15}{
\begin{picture}(78,98) (389,-205)
    \SetWidth{2}
    \SetColor{Black}
    \Vertex(375,-202){12}
    \Line(375,-202)(390,-163)
    \Line(375,-202)(349,-163)
    \Vertex(349,-163){9}
    \Vertex(390,-163){9}
    \Line(349,-163)(349,-111)
    \Vertex(349,-111){9}
    \Line(390,-163)(390,-111)
    \Vertex(390,-111){9}
    \Line(375,-202)(425,-163)
    \Vertex(425,-163){9}
    \Line(375,-202)(460,-163)
    \Vertex(460,-163){9}
\end{picture}
}}\,}

\def\matulaaeg{\,{\scalebox{0.15}{
 \begin{picture}(48,99) (349,-251)
    \SetWidth{2}
    \SetColor{Black}
    \Line(376,-199)(399,-160)
    \Line(373,-200)(353,-160)
    \Vertex(353,-160){9}
    \Vertex(399,-160){9}
    \Vertex(375,-156){9}
    \Vertex(375,-248){12}
    \Line(375,-248)(375,-201)
    \Line(375,-201)(375,-156)
    \Line(353,-160)(353,-115)
    \Vertex(353,-115){9}
    \Vertex(375,-201){9}
  \end{picture}
}}\,}

\def\matulaafc{\,{\scalebox{0.15}{
  \begin{picture}(68,88) (329,-215)
    \SetWidth{2}
    \SetColor{Black}
    \Vertex(375,-212){12}
    \Line(375,-212)(395,-175)
    \Line(375,-212)(354,-174)
    \Vertex(353,-171){9}
    \Vertex(395,-171){9}
    \Line(353,-171)(330,-131)
    \Line(353,-171)(376,-131)
    \Line(353,-171)(353,-129)
    \Vertex(353,-129){9}
    \Vertex(330,-131){9}
    \Vertex(376,-131){9}
  \end{picture}
}}\,}

\def\matulaafg{\,{\scalebox{0.15}{
 \begin{picture}(58,150) (379,-205)
    \SetWidth{2}
    \SetColor{Black}
    \Line(376,-148)(395,-113)
    \Line(373,-149)(354,-112)
    \Vertex(353,-111){9}
    \Vertex(395,-111){9}
    \Line(353,-108)(353,-65)
    \Vertex(353,-65){9}
    \Line(400,-202)(374,-153)
    \Vertex(374,-149){9}
    \Vertex(400,-202){12}
    \Vertex(426,-149){9}
    \Line(400,-200)(426,-153)
    \Vertex(426,-111){9}
    \Line(426,-149)(426,-111)
  \end{picture}
}}\,}

\def\matulaagc{\,{\scalebox{0.15}{
\begin{picture}(48,98) (379,-205)
    \SetWidth{2}
    \SetColor{Black}
    \Vertex(375,-202){12}
    \Line(375,-202)(399,-161)
    \Line(375,-202)(354,-164)
    \Vertex(350,-163){9}
    \Vertex(399,-161){9}
    \Line(375,-201)(375,-160)
    \Vertex(376,-161){9}
    \Vertex(425,-163){9}
    \Vertex(350,-110){9}
    \Line(375,-202)(425,-165)
    \Line(350,-161)(350,-110)
    \Line(350,-110)(350,-65)
    \Vertex(350,-65){9}
  \end{picture}
}}\,}

\def\matulaagi{\,{\scalebox{0.15}{
\begin{picture}(48,135) (349,-255)
    \SetWidth{2}
    \SetColor{Black}
    \Line(374,-164)(395,-128)
    \Line(374,-164)(354,-127)
    \Vertex(353,-124){9}
    \Vertex(395,-124){9}
    \Vertex(374,-164){9}
    \Line(374,-207)(374,-164)
    \Vertex(374,-207){9}
    \Line(374,-252)(374,-207)
    \Vertex(374,-252){12}
    \Vertex(354,-80){9}
    \Line(353,-124)(354,-80)
  \end{picture}
}}\,}

\def\matulaaha{\,{\scalebox{0.15}{
\begin{picture}(58,98) (379,-205)
    \SetWidth{2}
    \SetColor{Black}
    \Vertex(375,-202){12}
    \Line(375,-202)(395,-163)
    \Line(375,-202)(348,-163)
    \Vertex(348,-163){9}
    \Vertex(395,-163){9}
    \Line(348,-163)(348,-113)
    \Vertex(348,-113){9}
    \Line(395,-163)(380,-113)
    \Vertex(380,-113){9}
    \Line(395,-163)(415,-113)
    \Vertex(415,-113){9}
    \Line(375,-202)(430,-165)
    \Vertex(430,-165){9}
\end{picture}
}}\,}

\def\matulaaia{\,{\scalebox{0.15}{
 \begin{picture}(48,150) (349,-205)
    \SetWidth{2}
    \SetColor{Black}
    \Line(374,-149)(395,-111)
    \Line(374,-149)(353,-111)
    \Vertex(353,-111){9}
    \Vertex(395,-111){9}
    \Line(353,-109)(334,-65)
    \Line(353,-109)(372,-65)
    \Vertex(334,-65){9}
    \Vertex(372,-65){9}
    \Line(374,-202)(374,-149)
    \Vertex(374,-149){9}
    \Vertex(374,-202){12}
  \end{picture}
}}\,}

\def\matulaaic{\,{\scalebox{0.15}{
\begin{picture}(48,98) (349,-205)
    \SetWidth{2}
    \SetColor{Black}
    \Vertex(375,-202){12}
    \Line(375,-202)(399,-163)
    \Line(375,-202)(353,-163)
    \Vertex(353,-163){9}
    \Vertex(399,-163){9}
    \Line(375,-201)(375,-160)
    \Vertex(376,-157){9}
    \Vertex(353,-115){9}
    \Line(353,-155)(353,-115)
    \Vertex(353,-70){9}
    \Line(353,-115)(353,-70)
    \Vertex(353,-30){9}
    \Line(353,-70)(353,-30)
  \end{picture}
}}\,}

\def\matulaaig{\,{\scalebox{0.15}{
\begin{picture}(48,98) (379,-205)
    \SetWidth{2}
    \SetColor{Black}
    \Vertex(375,-202){12}
    \Line(375,-202)(395,-163)
    \Line(375,-202)(353,-163)
    \Vertex(353,-163){9}
    \Vertex(395,-163){9}
    \Line(353,-160)(353,-113)
    \Vertex(353,-113){9}
    \Line(395,-163)(395,-113)
    \Vertex(395,-113){9}
    \Line(375,-202)(430,-163)
    \Vertex(430,-163){9}
    \Line(430,-163)(430,-113)
    \Vertex(430,-113){9}
     \Line(353,-113)(353,-70)
    \Vertex(353,-70){9}
\end{picture}
}}\,}

\def\matulaaii{\,{\scalebox{0.15}{
 \begin{picture}(48,150) (369,-205)
    \SetWidth{2}
    \SetColor{Black}
    \Line(374,-149)(395,-111)
    \Line(374,-149)(354,-111)
    \Vertex(353,-111){9}
    \Vertex(395,-111){9}
    \Line(353,-111)(353,-59)
    \Vertex(353,-59){9}
    \Line(395,-202)(374,-149)
    \Line(395,-202)(415,-149)
    \Vertex(374,-149){9}
    \Vertex(415,-149){9}
    \Vertex(395,-202){12}
    \Line(395,-108)(395,-61)
    \Vertex(395,-59){9}
  \end{picture}
}}\,}

\def\matulabaa{\,{\scalebox{0.15}{
 \begin{picture}(48,153) (349,-190)
    \SetWidth{2}
    \SetColor{Black}
    \Vertex(375,-190){12}
    \Line(375,-190)(375,-147)
    \Vertex(375,-147){9}
    \Line(375,-145)(395,-108)
    \Line(375,-146)(354,-108)
    \Vertex(353,-108){9}
    \Vertex(395,-108){9}
    \Vertex(395,-56){9}
    \Line(353,-105)(353,-56)
    \Line(395,-105)(395,-56)
    \Vertex(353,-56){9}
    \Line(353,-56)(353,-1)
    \Vertex(353,-1){9}
  \end{picture}
}}\,}

\def\matulabbc{\,{\scalebox{0.15}{
 \begin{picture}(90,46) (330,-257)
    \SetWidth{2}
    \SetColor{Black}
    \Vertex(375,-254){12}
    \Line(375,-254)(395,-215)
    \Vertex(395,-215){9}
    \Line(375,-254)(335,-226)
    \Vertex(334,-224){9}
    \Line(375,-254)(355,-215)
    \Vertex(334,-180){9}
    \Line(334,-224)(334,-180)
    \Vertex(355,-215){9}
    \Line(375,-254)(417,-227)
    \Vertex(418,-225){9}
    \Line(375,-254)(375,-212)
    \Vertex(375,-212){9}
  \end{picture}
}}\,}

\def\matulabbg{\,{\scalebox{0.15}{
\begin{picture}(48,98) (349,-205)
    \SetWidth{2}
    \SetColor{Black}
    \Vertex(375,-202){12}
    \Line(375,-202)(402,-163)
    \Line(375,-202)(348,-163)
    \Vertex(348,-163){9}
    \Vertex(402,-163){9}
    \Line(348,-163)(360,-116)
    \Line(348,-163)(332,-116)
    \Vertex(360,-116){9}
    \Vertex(332,-116){9}
    \Line(402,-163)(392,-116)
    \Vertex(392,-116){9}
    \Line(402,-163)(420,-116)
    \Vertex(420,-116){9}
\end{picture}
}}\,}

\def\matulabbi{\,{\scalebox{0.15}{
 \begin{picture}(58,153) (359,-150)
    \SetWidth{2}
    \SetColor{Black}
    \Vertex(375,-147){12}
    \Line(375,-147)(395,-108)
    \Line(375,-147)(354,-108)
    \Line(375,-147)(437,-108)
    \Vertex(353,-108){9}
    \Vertex(395,-108){9}
    \Vertex(437,-108){9}
    \Vertex(395,-56){9}
    \Line(353,-105)(353,-58)
    \Line(395,-105)(395,-58)
    \Vertex(353,-58){9}
    \Line(353,-52)(353,-5)
    \Vertex(353,-5){9}
    \Line(395,-52)(395,-5)
    \Vertex(395,-5){9}
  \end{picture}
}}\,}

\def\matulabcc{\,{\scalebox{0.15}{
 \begin{picture}(48,153) (349,-150)
    \SetWidth{2}
    \SetColor{Black}
    \Vertex(375,-147){12}
    \Line(375,-147)(395,-108)
    \Line(375,-147)(354,-108)
    \Vertex(353,-108){9}
    \Vertex(395,-108){9}
    \Vertex(395,-58){9}
    \Line(395,-108)(395,-56)
    \Line(353,-108)(353,-56)
    \Vertex(353,-56){9}
    \Line(353,-56)(333,-8)
    \Vertex(333,-8){9}
    \Line(353,-56)(373,-8)
    \Vertex(373,-8){9}
  \end{picture}
}}\,}

\def\matulabci{\,{\scalebox{0.15}{
 \begin{picture}(58,99) (359,-251)
    \SetWidth{2}
    \SetColor{Black}
    \Line(370,-201)(390,-164)
    \Line(370,-201)(349,-164)
    \Vertex(349,-164){9}
    \Vertex(390,-164){9}
    \Vertex(349,-120){9}
    \Line(349,-164)(349,-120)
    \Vertex(405,-201){9}
    \Vertex(405,-248){12}
    \Line(405,-248)(370,-201)
    \Line(405,-248)(434,-201)
    \Line(405,-248)(405,-201)
    \Vertex(370,-201){9}
    \Vertex(434,-201){9}
  \end{picture}
}}\,}

\def\matulabda{\,{\scalebox{0.15}{
 \begin{picture}(90,46) (330,-300)
    \SetWidth{2}
    \SetColor{Black}
    \Vertex(375,-300){12}
    \Line(375,-300)(375,-254)
    \Vertex(375,-254){9}
    \Line(375,-254)(395,-215)
    \Vertex(395,-215){9}
    \Line(375,-254)(334,-224)
    \Vertex(334,-224){9}
    \Line(375,-254)(355,-215)
    \Vertex(355,-215){9}
    \Line(375,-255)(418,-225)
    \Vertex(418,-225){9}
  \end{picture}
}}\,}

\def\matulabea{\,{\scalebox{0.15}{
\begin{picture}(88,98) (349,-205)
    \SetWidth{2}
    \SetColor{Black}
    \Vertex(375,-202){12}
    \Line(375,-202)(395,-163)
    \Line(375,-202)(353,-163)
    \Vertex(353,-163){9}
    \Vertex(310,-163){9}
    \Line(375,-202)(310,-163)
    \Vertex(395,-163){9}
    \Line(353,-163)(353,-113)
    \Vertex(353,-113){9}
    \Line(395,-163)(395,-113)
    \Vertex(395,-113){9}
    \Line(375,-202)(430,-163)
    \Vertex(430,-163){9}
    \Line(310,-164)(310,-113)
    \Vertex(310,-113){9}
\end{picture}
}}\,}

\def\matulabeg{\,{\scalebox{0.15}{
 \begin{picture}(48,153) (349,-150)
    \SetWidth{2}
    \SetColor{Black}
    \Vertex(375,-147){12}
    \Line(375,-147)(395,-108)
    \Line(375,-147)(354,-108)
    \Vertex(353,-108){9}
    \Vertex(395,-108){9}
    \Vertex(395,-56){9}
    \Line(353,-105)(353,-56)
    \Line(395,-105)(395,-56)
    \Vertex(353,-56){9}
    \Vertex(395,-5){9}
    \Line(395,-56)(395,-5)
    \Line(353,-56)(353,-5)
    \Vertex(353,-5){9}
    \Line(353,-5)(353,40)
    \Vertex(353,40){9}
  \end{picture}
}}\,}

\def\matulabfc{\,{\scalebox{0.15}{
\begin{picture}(48,98) (349,-205)
    \SetWidth{2}
    \SetColor{Black}
    \Vertex(375,-202){12}
    \Line(375,-202)(399,-161)
    \Line(375,-202)(350,-163)
    \Vertex(350,-163){9}
    \Vertex(399,-161){9}
    \Line(375,-202)(375,-160)
    \Vertex(375,-161){9}
    \Vertex(425,-163){9}
    \Vertex(365,-110){9}
    \Vertex(335,-110){9}
    \Line(375,-202)(425,-165)
    \Line(350,-161)(335,-110)
    \Line(350,-161)(365,-110)
  \end{picture}
}}\,}

\def\matulabfi{\,{\scalebox{0.15}{
  \begin{picture}(68,88) (329,-215)
    \SetWidth{2}
    \SetColor{Black}
    \Vertex(375,-212){12}
    \Line(375,-212)(405,-171)
    \Line(375,-212)(354,-171)
    \Vertex(353,-171){9}
    \Vertex(405,-171){9}
    \Vertex(405,-131){9}
    \Line(405,-171)(405,-131)
    \Line(353,-171)(330,-131)
    \Line(353,-171)(376,-131)
    \Line(353,-171)(353,-129)
    \Vertex(353,-129){9}
    \Vertex(330,-131){9}
    \Vertex(376,-131){9}
  \end{picture}
}}\,}

\def\matulabga{\,{\scalebox{0.15}{
 \begin{picture}(58,150) (359,-205)
    \SetWidth{2}
    \SetColor{Black}
    \Line(376,-148)(395,-111)
    \Line(373,-149)(354,-111)
    \Vertex(353,-111){9}
    \Vertex(395,-111){9}
    \Line(353,-111)(353,-59)
    \Vertex(353,-59){9}
    \Line(353,-59)(353,-15)
    \Vertex(353,-15){9}
    \Line(395,-202)(374,-149)
    \Vertex(374,-149){9}
    \Vertex(395,-202){12}
    \Vertex(416,-149){9}
    \Line(395,-202)(416,-149)
  \end{picture}
}}\,}

\def\matulabgg{\,{\scalebox{0.15}{
\begin{picture}(48,135) (349,-303)
    \SetWidth{2}
    \SetColor{Black}
    \Line(376,-163)(395,-126)
    \Line(373,-164)(354,-126)
    \Vertex(353,-126){9}
    \Vertex(395,-126){9}
    \Vertex(374,-164){9}
    \Line(374,-205)(374,-164)
    \Vertex(374,-207){9}
    \Line(374,-252)(374,-207)
    \Vertex(374,-252){9}
    \Vertex(374,-300){12}
    \Line(374,-300)(374,-252)
  \end{picture}
}}\,}

\def\matulabha{\,{\scalebox{0.15}{
\begin{picture}(68,98) (369,-205)
    \SetWidth{2}
    \SetColor{Black}
    \Vertex(410,-202){12}
    \Line(410,-202)(395,-163)
    \Line(410,-202)(354,-163)
    \Vertex(353,-163){9}
    \Vertex(395,-163){9}
    \Line(353,-160)(353,-115)
    \Vertex(353,-115){9}
    \Line(395,-159)(395,-115)
    \Vertex(395,-115){9}
    \Line(410,-202)(430,-163)
    \Vertex(430,-163){9}
    \Vertex(465,-163){9}
    \Line(465,-163)(410,-202)
     \Line(353,-115)(353,-70)
    \Vertex(353,-70){9}
    \end{picture}
}}\,}

\def\matulabhc{\,{\scalebox{0.15}{
\begin{picture}(56,98) (357,-253)
    \SetWidth{2}
    \SetColor{Black}
    \Vertex(375,-202){9}
    \Vertex(375,-250){12}
    \Line(375,-250)(375,-202)
    \Line(375,-202)(395,-163)
    \Line(375,-202)(353,-163)
    \Vertex(353,-163){9}
    \Vertex(395,-163){9}
    \Line(353,-163)(353,-116)
    \Vertex(353,-116){9}
    \Line(395,-163)(395,-116)
    \Vertex(395,-116){9}
    \Line(375,-202)(430,-163)
    \Vertex(430,-163){9}
\end{picture}
}}\,}

\def\matulabic{\,{\scalebox{0.15}{
\begin{picture}(18,204) (380,-99)
    \SetWidth{2}
    \SetColor{Black}
    \Line(395,-95)(375,-51)
    \Vertex(375,-48){9}
    \Vertex(395,-96){12}
    \Vertex(415,-48){9}
    \Line(395,-96)(415,-48)
    \Line(375,-44)(375,0)
    \Vertex(374,3){9}
    \Line(375,6)(375,50)
    \Vertex(375,53){9}
    \Line(375,53)(375,95)
    \Vertex(375,95){9}
     \Line(375,95)(375,130)
    \Vertex(375,130){9}
  \end{picture}
}}\,}

\def\matulacog{\,{\scalebox{0.15}{
\begin{picture}(58,98) (359,-205)
    \SetWidth{2}
    \SetColor{Black}
    \Vertex(375,-202){12}
    \Line(375,-202)(390,-163)
    \Line(375,-202)(343,-163)
    \Vertex(343,-163){9}
    \Vertex(390,-163){9}
    \Line(343,-163)(328,-111)
    \Line(343,-163)(358,-111)
    \Vertex(328,-111){9}
    \Line(390,-159)(390,-111)
    \Vertex(390,-111){9}
    \Line(375,-202)(430,-163)
    \Vertex(430,-163){9}
    \Vertex(430,-111){9}
    \Vertex(358,-111){9}
    \Line(430,-163)(430,-111)
\end{picture}
}}\,}

\def\matulacaa{\,{\scalebox{0.15}{
 \begin{picture}(95,46) (335,-257)
    \SetWidth{2}
    \SetColor{Black}
    \Vertex(385,-254){12}
    \Line(385,-254)(395,-202)
    \Vertex(395,-202){9}
    \Line(385,-254)(335,-215)
    \Vertex(335,-215){9}
    \Line(385,-254)(356,-206)
    \Vertex(356,-206){9}
    \Line(385,-254)(417,-206)
    \Vertex(417,-206){9}
    \Line(385,-254)(375,-202)
    \Vertex(375,-202){9}
     \Line(385,-254)(439,-215)
    \Vertex(439,-215){9}
  \end{picture}
}}\,}

\def\matulacac{\,{\scalebox{0.15}{
 \begin{picture}(58,150) (359,-205)
    \SetWidth{2}
    \SetColor{Black}
    \Line(376,-148)(395,-113)
    \Line(373,-149)(354,-112)
    \Vertex(353,-111){9}
    \Vertex(395,-111){9}
    \Line(353,-108)(353,-61)
    \Vertex(353,-59){9}
    \Line(400,-202)(374,-153)
    \Vertex(374,-149){9}
    \Vertex(400,-202){12}
    \Vertex(426,-149){9}
    \Line(400,-200)(426,-153)
    \Vertex(426,-111){9}
    \Vertex(426,-59){9}
    \Line(426,-111)(426,-59)
    \Line(426,-149)(426,-111)
  \end{picture}
}}\,}

\def\matulacag{\,{\scalebox{0.15}{
\begin{picture}(58,98) (359,-205)
    \SetWidth{2}
    \SetColor{Black}
    \Vertex(375,-202){12}
    \Line(376,-200)(395,-165)
    \Line(373,-201)(354,-164)
    \Vertex(353,-163){9}
    \Vertex(395,-163){9}
    \Line(353,-160)(353,-115)
    \Vertex(353,-115){9}
    \Line(395,-159)(395,-115)
    \Vertex(395,-115){9}
    \Line(376,-201)(430,-164)
    \Vertex(430,-163){9}
     \Line(353,-115)(353,-70)
    \Vertex(353,-70){9}
    \Line(353,-70)(353,-30)
    \Vertex(353,-30){9}
    \end{picture}
}}\,}

\def\matulacca{\,{\scalebox{0.15}{
 \begin{picture}(48,99) (349,-303)
    \SetWidth{2}
    \SetColor{Black}
    \Line(376,-199)(395,-164)
    \Line(376,-200)(353,-163)
    \Vertex(353,-162){9}
    \Vertex(399,-162){9}
    \Vertex(376,-160){9}
    \Vertex(376,-248){9}
    \Vertex(376,-300){12}
    \Line(376,-300)(376,-248)
    \Line(376,-245)(375,-200)
    \Line(376,-200)(376,-160)
    \Vertex(376,-200){9}
  \end{picture}
}}\,}

\def\matulaccg{\,{\scalebox{0.15}{
\begin{picture}(48,98) (349,-205)
    \SetWidth{2}
    \SetColor{Black}
    \Vertex(375,-202){12}
    \Line(375,-202)(399,-163)
    \Line(375,-202)(354,-163)
    \Vertex(353,-163){9}
    \Vertex(399,-163){9}
    \Line(375,-202)(375,-160)
    \Vertex(375,-157){9}
    \Vertex(353,-115){9}
    \Line(353,-155)(353,-115)
    \Vertex(340,-70){9}
    \Line(353,-115)(340,-70)
    \Vertex(366,-70){9}
    \Line(353,-115)(366,-70)
  \end{picture}
}}\,}

\def\matulacdg{\,{\scalebox{0.15}{
 \begin{picture}(58,150) (359,-205)
    \SetWidth{2}
    \SetColor{Black}
    \Line(376,-148)(395,-113)
    \Line(373,-149)(354,-112)
    \Vertex(353,-111){9}
    \Vertex(395,-111){9}
    \Vertex(395,-65){9}
    \Line(395,-111)(395,-65)
    \Line(353,-108)(353,-65)
    \Vertex(353,-65){9}
    \Line(400,-202)(374,-153)
    \Vertex(374,-149){9}
    \Vertex(400,-202){12}
    \Vertex(426,-149){9}
    \Line(400,-200)(426,-153)
    \Vertex(426,-111){9}
    \Line(426,-149)(426,-111)
  \end{picture}
}}\,}

\def\matulacdi{\,{\scalebox{0.15}{
\begin{picture}(58,98) (359,-205)
    \SetWidth{2}
    \SetColor{Black}
    \Vertex(375,-202){12}
    \Line(376,-200)(395,-165)
    \Line(373,-201)(348,-165)
    \Vertex(348,-163){9}
    \Vertex(395,-163){9}
    \Line(348,-160)(348,-113)
    \Vertex(348,-113){9}
    \Line(348,-113)(348,-70)
    \Vertex(348,-70){9}
    \Line(395,-159)(380,-113)
    \Vertex(380,-113){9}
    \Line(395,-163)(415,-113)
    \Vertex(415,-113){9}
    \Line(375,-202)(430,-165)
    \Vertex(430,-165){9}
\end{picture}
}}\,}

\def\matulacec{\,{\scalebox{0.15}{
\begin{picture}(48,98) (349,-253)
    \SetWidth{2}
    \SetColor{Black}
    \Vertex(375,-202){9}
     \Vertex(375,-250){12}
     \Line(375,-250)(375,-202)
    \Line(375,-202)(399,-161)
    \Line(375,-202)(353,-161)
    \Vertex(353,-161){9}
    \Vertex(399,-161){9}
    \Line(375,-202)(375,-157)
    \Vertex(375,-157){9}
    \Vertex(353,-115){9}
    \Line(353,-155)(353,-115)
    \Vertex(353,-70){9}
    \Line(353,-115)(353,-70)
  \end{picture}
}}\,}

\def\matulacei{\,{\scalebox{0.15}{
\begin{picture}(48,98) (349,-205)
    \SetWidth{2}
    \SetColor{Black}
    \Vertex(375,-202){12}
    \Line(375,-202)(399,-159)
    \Line(375,-202)(350,-159)
    \Vertex(350,-159){9}
    \Line(375,-202)(325,-163)
    \Vertex(325,-163){9}
    \Vertex(325,-110){9}
    \Line(325,-163)(325,-110)
    \Vertex(399,-159){9}
    \Line(375,-202)(375,-158)
    \Vertex(375,-158){9}
    \Vertex(425,-163){9}
    \Vertex(350,-110){9}
    \Line(375,-202)(425,-165)
    \Line(350,-159)(350,-110)
  \end{picture}
}}\,}

\def\matulacfg{\,{\scalebox{0.15}{
\begin{picture}(48,98) (349,-248)
    \SetWidth{2}
    \SetColor{Black}
    \Vertex(375,-202){9}
    \Vertex(375,-245){12}
    \Line(375,-202)(375,-245)
    \Line(375,-202)(395,-165)
    \Line(375,-202)(348,-164)
    \Vertex(348,-163){9}
    \Vertex(395,-163){9}
    \Line(348,-160)(348,-113)
    \Vertex(348,-111){9}
    \Line(395,-159)(380,-112)
    \Vertex(380,-111){9}
    \Line(395,-163)(415,-113)
    \Vertex(415,-113){9}
\end{picture}
}}\,}

\def\matulacgc{\,{\scalebox{0.15}{
  \begin{picture}(68,88) (329,-215)
    \SetWidth{2}
    \SetColor{Black}
    \Vertex(375,-212){12}
    \Line(376,-210)(395,-175)
    \Line(373,-211)(354,-174)
    \Vertex(353,-171){9}
    \Vertex(395,-171){9}
    \Line(353,-171)(330,-131)
    \Line(353,-171)(376,-131)
    \Line(353,-171)(353,-129)
    \Vertex(353,-129){9}
    \Vertex(330,-131){9}
    \Vertex(376,-131){9}
    \Vertex(330,-85){9}
    \Line(330,-131)(330,-85)
  \end{picture}
}}\,}

\def\matulacgi{\,{\scalebox{0.15}{
 \begin{picture}(60,153) (361,-150)
    \SetWidth{2}
    \SetColor{Black}
    \Vertex(375,-147){12}
    \Line(375,-147)(395,-108)
    \Line(375,-147)(354,-108)
    \Line(375,-147)(437,-108)
    \Vertex(353,-108){9}
    \Vertex(395,-108){9}
    \Vertex(437,-108){9}
    \Vertex(437,-58){9}
    \Line(437,-108)(437,-58)
    \Vertex(395,-56){9}
    \Line(353,-105)(353,-58)
    \Line(395,-105)(395,-58)
    \Vertex(353,-58){9}
    \Line(353,-52)(353,-5)
    \Vertex(353,-5){9}
    \Line(395,-52)(395,-5)
    \Vertex(395,-5){9}
  \end{picture}
}}\,}

\def\matulachc{\,{\scalebox{0.15}{
  \begin{picture}(78,88) (339,-215)
    \SetWidth{2}
    \SetColor{Black}
    \Vertex(375,-212){12}
    \Line(375,-212)(385,-171)
    \Line(375,-212)(353,-171)
    \Vertex(353,-171){9}
    \Vertex(385,-171){9}
    \Vertex(417,-171){9}
    \Line(375,-212)(417,-171)
    \Line(353,-171)(330,-131)
    \Line(353,-171)(376,-131)
    \Line(353,-171)(353,-129)
    \Vertex(353,-129){9}
    \Vertex(330,-131){9}
    \Vertex(376,-131){9}
  \end{picture}
}}\,}

\def\matulachi{\,{\scalebox{0.15}{
\begin{picture}(48,98) (349,-205)
    \SetWidth{2}
    \SetColor{Black}
    \Vertex(375,-202){12}
    \Line(375,-202)(395,-163)
    \Line(375,-202)(348,-163)
    \Vertex(348,-163){9}
    \Vertex(395,-163){9}
    \Line(348,-160)(348,-113)
    \Vertex(348,-111){9}
    \Line(348,-111)(348,-65)
    \Vertex(348,-65){9}
    \Line(348,-65)(348,-20)
    \Vertex(348,-20){9}
    \Line(395,-159)(380,-112)
    \Vertex(380,-111){9}
    \Line(395,-163)(415,-113)
    \Vertex(415,-113){9}
\end{picture}
}}\,}

\def\matulacig{\,{\scalebox{0.15}{
 \begin{picture}(73,150) (374,-205)
    \SetWidth{2}
    \SetColor{Black}
    \Line(376,-148)(395,-113)
    \Line(373,-149)(354,-112)
    \Vertex(353,-111){9}
    \Vertex(395,-111){9}
    \Line(353,-108)(353,-65)
    \Vertex(353,-65){9}
    \Line(400,-202)(374,-153)
    \Vertex(374,-149){9}
    \Vertex(400,-202){12}
    \Vertex(426,-149){9}
    \Vertex(470,-149){9}
    \Line(400,-202)(470,-149)
    \Line(400,-200)(426,-153)
    \Vertex(426,-111){9}
    \Line(426,-149)(426,-111)
  \end{picture}
}}\,}

\def\matuladoa{\,{\scalebox{0.15}{
 \begin{picture}(48,190) (349,-193)
    \SetWidth{2}
    \SetColor{Black}
    \Vertex(375,-147){9}
    \Vertex(375,-190){12}
    \Line(375,-190)(375,-147)
    \Line(375,-147)(395,-106)
    \Line(375,-147)(353,-106)
    \Vertex(353,-106){9}
    \Vertex(395,-106){9}
    \Line(353,-105)(353,-58)
    \Vertex(353,-56){9}
    \Line(353,-52)(353,-10)
    \Vertex(353,-10){9}
    \Line(353,-10)(353,35)
    \Vertex(353,35){9}
  \end{picture}
}}\,}

\def\matuladoi{\,{\scalebox{0.15}{
 \begin{picture}(90,46) (330,-257)
    \SetWidth{2}
    \SetColor{Black}
    \Vertex(375,-254){12}
    \Line(375,-254)(395,-215)
    \Vertex(395,-215){9}
    \Line(375,-254)(335,-226)
    \Vertex(334,-224){9}
    \Line(375,-254)(355,-215)
    \Vertex(334,-180){9}
    \Line(334,-224)(334,-180)
    \Vertex(334,-140){9}
    \Line(334,-140)(334,-180)
    \Vertex(355,-215){9}
    \Line(375,-254)(417,-227)
    \Vertex(418,-225){9}
    \Line(375,-254)(375,-212)
    \Vertex(375,-212){9}
  \end{picture}
}}\,}

\def\matuladai{\,{\scalebox{0.15}{
\begin{picture}(48,98) (349,-205)
    \SetWidth{2}
    \SetColor{Black}
    \Vertex(375,-202){12}
    \Line(375,-202)(395,-165)
    \Line(375,-202)(354,-164)
    \Vertex(353,-163){9}
    \Vertex(310,-163){9}
    \Vertex(310,-113){9}
    \Line(310,-163)(310,-113)
    \Line(375,-202)(310,-165)
    \Vertex(395,-163){9}
    \Line(353,-160)(353,-113)
    \Vertex(353,-113){9}
    \Line(395,-159)(395,-113)
    \Vertex(395,-113){9}
    \Line(375,-202)(430,-164)
    \Vertex(430,-163){9}
    \Line(430,-164)(430,-113)
    \Vertex(430,-113){9}
\end{picture}
}}\,}

\def\matuladba{\,{\scalebox{0.15}{
 \begin{picture}(48,153) (349,-150)
    \SetWidth{2}
    \SetColor{Black}
    \Vertex(375,-147){12}
    \Line(375,-147)(395,-108)
    \Line(375,-147)(354,-108)
    \Vertex(353,-108){9}
    \Vertex(395,-108){9}
    \Line(353,-108)(353,-58)
    \Vertex(353,-56){9}
    \Line(353,-56)(333,-8)
    \Vertex(333,-8){9}
    \Vertex(333,35){9}
    \Line(333,-8)(333,35)
    \Line(353,-56)(373,-8)
    \Vertex(373,-8){9}
  \end{picture}
}}\,}

\def\matuladca{\,{\scalebox{0.15}{
 \begin{picture}(48,150) (349,-253)
    \SetWidth{2}
    \SetColor{Black}
    \Line(376,-148)(395,-113)
    \Line(373,-149)(354,-112)
    \Vertex(353,-109){9}
    \Vertex(395,-111){9}
    \Line(353,-108)(353,-65)
    \Vertex(353,-65){9}
    \Line(374,-200)(374,-153)
    \Vertex(374,-149){9}
    \Vertex(374,-202){9}
     \Vertex(374,-250){12}
     \Line(374,-250)(374,-202)
    \Line(395,-108)(395,-65)
    \Vertex(395,-65){9}
  \end{picture}
}}\,}

\def\matuladcc{\,{\scalebox{0.15}{
\begin{picture}(62,98) (363,-205)
    \SetWidth{2}
    \SetColor{Black}
    \Vertex(375,-202){12}
    \Line(375,-202)(390,-165)
    \Line(375,-202)(349,-164)
    \Vertex(349,-163){9}
    \Vertex(390,-163){9}
    \Line(349,-163)(332,-111)
    \Line(349,-163)(366,-111)
    \Vertex(332,-111){9}
    \Vertex(366,-111){9}
    \Line(390,-159)(390,-112)
    \Vertex(390,-111){9}
    \Line(375,-202)(425,-164)
    \Vertex(425,-163){9}
    \Line(375,-202)(460,-164)
    \Vertex(460,-163){9}
\end{picture}
}}\,}

\def\matuladci{\,{\scalebox{0.15}{
 \begin{picture}(48,153) (349,-150)
    \SetWidth{2}
    \SetColor{Black}
    \Vertex(375,-147){12}
    \Line(375,-147)(400,-108)
    \Line(375,-147)(354,-108)
    \Vertex(353,-108){9}
    \Vertex(400,-108){9}
    \Vertex(400,-58){9}
    \Vertex(400,-8){9}
    \Line(400,-108)(400,-58)
    \Line(400,-58)(400,-8)
    \Line(353,-108)(353,-58)
    \Vertex(353,-56){9}
    \Line(353,-56)(336,-8)
    \Vertex(336,-8){9}
    \Line(353,-56)(370,-8)
    \Vertex(370,-8){9}
  \end{picture}
}}\,}

\def\matuladdc{\,{\scalebox{0.15}{
 \begin{picture}(58,150) (359,-205)
    \SetWidth{2}
    \SetColor{Black}
    \Line(376,-148)(395,-111)
    \Line(373,-149)(354,-111)
    \Vertex(353,-111){9}
    \Vertex(395,-111){9}
    \Line(353,-111)(370,-64)
    \Line(353,-111)(336,-64)
    \Vertex(370,-64){9}
    \Vertex(336,-64){9}
    \Line(395,-202)(374,-153)
    \Vertex(374,-149){9}
    \Vertex(395,-202){12}
    \Vertex(416,-149){9}
    \Line(395,-200)(416,-153)
  \end{picture}
}}\,}

\def\matuladdi{\,{\scalebox{0.15}{
 \begin{picture}(63,150) (364,-205)
    \SetWidth{2}
    \SetColor{Black}
    \Line(376,-148)(395,-113)
    \Line(373,-149)(354,-112)
    \Vertex(353,-111){9}
    \Vertex(395,-111){9}
    \Line(353,-108)(353,-65)
    \Vertex(353,-65){9}
    \Vertex(353,-20){9}
    \Line(353,-65)(353,-20)
    \Line(400,-202)(374,-153)
    \Vertex(374,-149){9}
    \Vertex(400,-202){12}
    \Vertex(426,-149){9}
    \Line(400,-200)(426,-153)
    \Vertex(426,-111){9}
    \Line(426,-149)(426,-111)
  \end{picture}
}}\,}

\def\matuladeg{\,{\scalebox{0.15}{
\begin{picture}(53,98) (354,-205)
    \SetWidth{2}
    \SetColor{Black}
    \Vertex(375,-202){12}
    \Line(376,-200)(399,-161)
    \Line(373,-201)(354,-164)
    \Vertex(350,-163){9}
    \Vertex(399,-161){9}
    \Line(375,-201)(375,-160)
    \Vertex(376,-161){9}
    \Vertex(425,-163){9}
    \Vertex(350,-110){9}
    \Line(375,-202)(425,-165)
    \Line(350,-161)(350,-110)
    \Line(350,-110)(350,-65)
    \Vertex(350,-65){9}
    \Vertex(350,-20){9}
    \Line(350,-65)(350,-20)
  \end{picture}
}}\,}

\def\matuladfa{\,{\scalebox{0.15}{
\begin{picture}(55,98) (356,-253)
    \SetWidth{2}
    \SetColor{Black}
    \Vertex(375,-202){9}
    \Vertex(375,-250){12}
    \Line(375,-250)(375,-202)
    \Line(375,-202)(399,-161)
    \Line(375,-202)(350,-163)
    \Vertex(350,-163){9}
    \Vertex(399,-161){9}
    \Line(375,-202)(375,-160)
    \Vertex(375,-161){9}
    \Vertex(425,-163){9}
    \Vertex(350,-110){9}
    \Line(375,-202)(425,-165)
    \Line(350,-161)(350,-110)
  \end{picture}
}}\,}

\def\matuladfc{\,{\scalebox{0.15}{
\begin{picture}(48,98) (349,-205)
    \SetWidth{2}
    \SetColor{Black}
    \Vertex(375,-202){12}
    \Line(375,-202)(395,-165)
    \Line(375,-202)(354,-164)
    \Vertex(353,-163){9}
    \Vertex(310,-163){9}
    \Line(375,-202)(310,-165)
    \Vertex(395,-163){9}
    \Line(353,-160)(353,-113)
    \Vertex(353,-113){9}
    \Line(395,-159)(395,-113)
    \Vertex(395,-113){9}
    \Line(375,-202)(430,-164)
    \Vertex(430,-163){9}
    \Line(310,-164)(310,-113)
    \Vertex(310,-113){9}
    \Line(310,-113)(310,-70)
    \Vertex(310,-70){9}
\end{picture}
}}\,}

\def\matuladfg{\,{\scalebox{0.15}{
\begin{picture}(48,98) (349,-205)
    \SetWidth{2}
    \SetColor{Black}
    \Vertex(375,-202){12}
    \Line(375,-202)(402,-163)
    \Line(375,-202)(348,-163)
    \Vertex(348,-163){9}
    \Vertex(402,-163){9}
    \Line(348,-163)(360,-116)
    \Line(348,-163)(332,-116)
    \Vertex(360,-116){9}
    \Vertex(332,-116){9}
    \Vertex(332,-71){9}
    \Line(332,-116)(332,-71)
    \Line(402,-163)(392,-116)
    \Vertex(392,-116){9}
    \Line(402,-163)(420,-116)
    \Vertex(420,-116){9}
\end{picture}
}}\,}

\def\matuladgi{\,{\scalebox{0.15}{
 \begin{picture}(58,99) (359,-251)
    \SetWidth{2}
    \SetColor{Black}
    \Line(376,-201)(395,-164)
    \Line(376,-201)(354,-163)
    \Vertex(353,-162){9}
    \Vertex(399,-162){9}
    \Vertex(353,-120){9}
    \Vertex(399,-120){9}
    \Line(399,-162)(399,-120)
    \Line(353,-162)(353,-120)
    \Vertex(400,-201){9}
    \Vertex(400,-248){12}
    \Line(400,-248)(375,-204)
    \Line(400,-248)(424,-204)
    \Line(400,-248)(400,-201)
    \Vertex(376,-201){9}
    \Vertex(424,-201){9}
  \end{picture}
}}\,}

\def\matuladhg{\,{\scalebox{0.15}{
 \begin{picture}(48,163) (349,-150)
    \SetWidth{2}
    \SetColor{Black}
    \Vertex(375,-147){12}
    \Line(376,-145)(395,-110)
    \Line(373,-146)(354,-109)
    \Vertex(353,-106){9}
    \Vertex(395,-108){9}
    \Vertex(395,-56){9}
    \Line(353,-105)(353,-58)
    \Line(395,-105)(395,-58)
    \Vertex(353,-56){9}
    \Line(353,-52)(353,-5)
    \Vertex(353,-5){9}
    \Line(353,-5)(353,40)
    \Vertex(353,40){9}
    \Line(353,40)(353,80)
    \Vertex(353,80){9}
  \end{picture}
}}\,}

\def\matuladia{\,{\scalebox{0.15}{
 \begin{picture}(58,150) (359,-205)
    \SetWidth{2}
    \SetColor{Black}
    \Line(376,-148)(395,-113)
    \Line(373,-149)(354,-112)
    \Vertex(353,-109){9}
    \Vertex(395,-111){9}
    \Line(353,-108)(353,-61)
    \Vertex(353,-59){9}
    \Vertex(353,-14){9}
    \Line(353,-59)(353,-14)
    \Line(395,-200)(374,-153)
    \Line(395,-200)(415,-153)
    \Vertex(374,-149){9}
    \Vertex(415,-149){9}
    \Vertex(395,-202){12}
    \Line(395,-108)(395,-61)
    \Vertex(395,-59){9}
  \end{picture}
}}\,}

\def\matuladii{\,{\scalebox{0.15}{
  \begin{picture}(75,88) (336,-215)
    \SetWidth{2}
    \SetColor{Black}
    \Vertex(375,-212){12}
    \Line(375,-212)(405,-171)
    \Line(375,-212)(354,-171)
    \Vertex(353,-171){9}
    \Vertex(405,-171){9}
    \Vertex(405,-131){9}
    \Vertex(405,-91){9}
    \Line(405,-131)(405,-91)
    \Line(405,-171)(405,-131)
    \Line(353,-171)(330,-131)
    \Line(353,-171)(376,-131)
    \Line(353,-171)(353,-129)
    \Vertex(353,-129){9}
    \Vertex(330,-131){9}
    \Vertex(376,-131){9}
  \end{picture}
}}\,}

\def\matulaeoc{\,{\scalebox{0.15}{
 \begin{picture}(95,46) (335,-257)
    \SetWidth{2}
    \SetColor{Black}
    \Vertex(385,-254){12}
    \Line(385,-254)(395,-202)
    \Vertex(395,-202){9}
    \Line(385,-254)(335,-215)
    \Vertex(335,-215){9}
    \Vertex(335,-165){9}
    \Line(335,-215)(335,-165)
    \Line(385,-254)(356,-206)
    \Vertex(356,-206){9}
    \Line(385,-254)(417,-206)
    \Vertex(417,-206){9}
    \Line(385,-254)(375,-202)
    \Vertex(375,-202){9}
     \Line(385,-254)(439,-215)
    \Vertex(439,-215){9}
  \end{picture}
}}\,}

\def\matulaeoi{\,{\scalebox{0.15}{
 \begin{picture}(48,153) (349,-200)
    \SetWidth{2}
    \SetColor{Black}
    \Vertex(375,-147){9}
    \Vertex(375,-197){12}
    \Line(375,-197)(375,-147)
    \Line(376,-145)(395,-110)
    \Line(373,-146)(354,-109)
    \Vertex(353,-108){9}
    \Vertex(395,-108){9}
    \Vertex(395,-56){9}
    \Line(353,-105)(353,-58)
    \Line(395,-105)(395,-58)
    \Vertex(353,-58){9}
    \Line(353,-52)(353,-5)
    \Vertex(353,-5){9}
    \Line(395,-52)(395,-5)
    \Vertex(395,-5){9}
  \end{picture}
}}\,}

\def\matulaeba{\,{\scalebox{0.15}{
\begin{picture}(61,98) (361,-205)
    \SetWidth{2}
    \SetColor{Black}
    \Vertex(375,-202){12}
    \Line(375,-202)(402,-163)
    \Line(375,-202)(348,-163)
    \Vertex(348,-163){9}
    \Vertex(402,-163){9}
    \Vertex(448,-163){9}
    \Line(448,-163)(375,-202)
    \Line(348,-163)(360,-116)
    \Line(348,-163)(332,-116)
    \Vertex(360,-116){9}
    \Vertex(332,-116){9}
    \Line(402,-163)(392,-116)
    \Vertex(392,-116){9}
    \Line(402,-163)(420,-116)
    \Vertex(420,-116){9}
\end{picture}
}}\,}

\def\matulaebc{\,{\scalebox{0.15}{
\begin{picture}(58,98) (359,-205)
    \SetWidth{2}
    \SetColor{Black}
    \Vertex(375,-202){12}
    \Line(375,-202)(395,-163)
    \Line(375,-202)(353,-163)
    \Vertex(353,-163){9}
    \Vertex(395,-163){9}
    \Line(353,-160)(353,-113)
    \Vertex(353,-113){9}
    \Line(395,-159)(395,-113)
    \Vertex(395,-113){9}
    \Line(376,-201)(430,-163)
    \Vertex(430,-163){9}
    \Line(430,-163)(430,-113)
    \Vertex(430,-113){9}
     \Line(353,-113)(353,-70)
    \Vertex(353,-70){9}
    \Line(353,-70)(353,-30)
    \Vertex(353,-30){9}
\end{picture}
}}\,}

\def\matulaeda{\,{\scalebox{0.15}{
\begin{picture}(68,98) (369,-205)
    \SetWidth{2}
    \SetColor{Black}
    \Vertex(410,-202){12}
    \Line(410,-202)(395,-165)
    \Line(410,-202)(354,-164)
    \Vertex(353,-163){9}
    \Vertex(395,-163){9}
    \Line(353,-160)(353,-115)
    \Vertex(353,-115){9}
    \Line(395,-159)(395,-115)
    \Vertex(395,-115){9}
    \Line(395,-115)(395,-70)
    \Vertex(395,-70){9}
    \Line(410,-202)(430,-164)
    \Vertex(430,-163){9}
    \Vertex(465,-163){9}
    \Line(465,-163)(410,-202)
     \Line(353,-115)(353,-70)
    \Vertex(353,-70){9}
    \end{picture}
}}\,}

\def\matulaedg{\,{\scalebox{0.15}{
 \begin{picture}(58,150) (359,-255)
    \SetWidth{2}
    \SetColor{Black}
    \Line(376,-148)(395,-113)
    \Line(373,-149)(354,-112)
    \Vertex(353,-109){9}
    \Vertex(395,-111){9}
    \Line(353,-108)(353,-61)
    \Vertex(353,-59){9}
    \Line(390,-200)(374,-153)
    \Vertex(374,-149){9}
    \Vertex(395,-202){9}
    \Vertex(395,-252){12}
    \Line(395,-252)(395,-202)
    \Vertex(416,-149){9}
    \Line(395,-200)(416,-153)
  \end{picture}
}}\,}

\def\matulaeeg{\,{\scalebox{0.15}{
 \begin{picture}(48,153) (349,-150)
    \SetWidth{2}
    \SetColor{Black}
    \Vertex(375,-147){12}
    \Vertex(430,-108){9}
    \Line(375,-147)(430,-108)
    \Line(376,-145)(395,-108)
    \Line(373,-146)(354,-108)
    \Vertex(353,-108){9}
    \Vertex(395,-108){9}
    \Vertex(395,-58){9}
    \Line(395,-108)(395,-58)
    \Line(353,-108)(353,-58)
    \Vertex(353,-56){9}
    \Line(353,-56)(333,-8)
    \Vertex(333,-8){9}
    \Line(353,-56)(373,-8)
    \Vertex(373,-8){9}
  \end{picture}
}}\,}

\def\matulaefc{\,{\scalebox{0.15}{
\begin{picture}(58,98) (359,-255)
    \SetWidth{2}
    \SetColor{Black}
    \Vertex(375,-252){12}
    \Vertex(375,-202){9}
    \Line(375,-252)(375,-202)
    \Line(376,-200)(395,-165)
    \Line(373,-201)(354,-164)
    \Vertex(353,-163){9}
    \Vertex(395,-163){9}
    \Line(353,-160)(353,-113)
    \Vertex(353,-113){9}
    \Line(395,-159)(395,-113)
    \Vertex(395,-113){9}
    \Line(376,-201)(430,-164)
    \Vertex(430,-163){9}
    \Line(430,-164)(430,-113)
    \Vertex(430,-113){9}
\end{picture}
}}\,}

\def\matulaefi{\,{\scalebox{0.15}{
\begin{picture}(48,98) (349,-205)
    \SetWidth{2}
    \SetColor{Black}
    \Vertex(375,-202){12}
    \Line(375,-202)(399,-161)
    \Line(375,-202)(350,-163)
    \Vertex(350,-163){9}
    \Vertex(399,-161){9}
    \Line(375,-202)(375,-160)
    \Vertex(375,-161){9}
    \Vertex(425,-163){9}
    \Vertex(365,-110){9}
    \Vertex(335,-110){9}
    \Vertex(335,-65){9}
    \Line(335,-110)(335,-65)
    \Line(375,-202)(425,-165)
    \Line(350,-161)(335,-110)
    \Line(350,-161)(365,-110)
  \end{picture}
}}\,}

\def\matulaega{\,{\scalebox{0.15}{
\begin{picture}(68,98) (369,-205)
    \SetWidth{2}
    \SetColor{Black}
    \Vertex(395,-202){12}
    \Line(395,-202)(395,-165)
    \Line(395,-202)(348,-165)
    \Vertex(348,-163){9}
    \Vertex(395,-163){9}
    \Line(348,-160)(348,-115)
    \Vertex(348,-115){9}
    \Line(348,-115)(348,-70)
    \Vertex(348,-70){9}
    \Line(395,-159)(380,-115)
    \Vertex(380,-115){9}
    \Line(395,-163)(415,-115)
    \Vertex(415,-115){9}
    \Line(395,-202)(442,-165)
    \Vertex(442,-165){9}
    \Vertex(442,-115){9}
    \Line(442,-165)(442,-115)
\end{picture}
}}\,}

\def\matulaegg{\,{\scalebox{0.15}{
 \begin{picture}(100,46) (340,-300)
    \SetWidth{2}
    \SetColor{Black}
    \Vertex(410,-300){12}
    \Line(410,-300)(375,-254)
    \Vertex(375,-254){9}
    \Vertex(445,-254){9}
    \Line(410,-300)(445,-254)
    \Line(375,-252)(395,-217)
    \Vertex(395,-215){9}
    \Line(375,-254)(335,-226)
    \Vertex(334,-224){9}
    \Line(375,-252)(356,-215)
    \Vertex(355,-215){9}
    \Line(375,-255)(417,-227)
    \Vertex(418,-225){9}
  \end{picture}
}}\,}

\def\matulaehg{\,{\scalebox{0.15}{
 \begin{picture}(58,99) (359,-301)
    \SetWidth{2}
    \SetColor{Black}
    \Line(376,-201)(395,-164)
    \Line(376,-201)(354,-163)
    \Vertex(353,-162){9}
    \Vertex(399,-162){9}
    \Vertex(400,-201){9}
    \Vertex(400,-248){9}
    \Vertex(400,-298){12}
    \Line(400,-298)(400,-248)
    \Line(400,-248)(375,-204)
    \Line(400,-248)(424,-204)
    \Line(400,-248)(400,-201)
    \Vertex(376,-201){9}
    \Vertex(424,-201){9}
  \end{picture}
}}\,}

\def\matulaeic{\,{\scalebox{0.15}{
\begin{picture}(48,98) (349,-205)
    \SetWidth{2}
    \SetColor{Black}
    \Vertex(375,-202){12}
    \Line(375,-202)(399,-159)
    \Line(375,-202)(350,-159)
    \Vertex(350,-159){9}
    \Line(375,-202)(325,-163)
    \Vertex(325,-163){9}
    \Vertex(325,-110){9}
    \Line(325,-163)(325,-110)
    \Vertex(399,-159){9}
    \Line(375,-202)(375,-158)
    \Vertex(375,-158){9}
    \Vertex(375,-110){9}
    \Line(375,-158)(375,-110)
    \Vertex(425,-163){9}
    \Vertex(350,-110){9}
    \Line(375,-202)(425,-165)
    \Line(350,-159)(350,-110)
  \end{picture}
}}\,}

\def\matulaeii{\,{\scalebox{0.15}{
 \begin{picture}(48,200) (349,-255)
    \SetWidth{2}
    \SetColor{Black}
    \Line(376,-148)(395,-113)
    \Line(373,-149)(354,-112)
    \Vertex(353,-109){9}
    \Vertex(395,-111){9}
    \Line(353,-108)(353,-61)
    \Vertex(353,-59){9}
    \Line(353,-61)(353,-15)
    \Vertex(353,-15){9}
    \Line(374,-200)(374,-153)
    \Vertex(374,-149){9}
    \Vertex(374,-202){9}
    \Vertex(374,-252){12}
    \Line(374,-202)(374,-252)
  \end{picture}
}}\,}

\def\matulafoa{\,{\scalebox{0.15}{
 \begin{picture}(58,153) (359,-150)
    \SetWidth{2}
    \SetColor{Black}
    \Vertex(375,-147){12}
    \Line(375,-147)(395,-108)
    \Line(375,-147)(354,-108)
    \Line(375,-147)(437,-108)
    \Vertex(353,-108){9}
    \Vertex(395,-108){9}
    \Vertex(437,-108){9}
    \Vertex(395,-56){9}
    \Line(353,-105)(353,-58)
    \Line(395,-105)(395,-58)
    \Vertex(353,-58){9}
    \Line(353,-52)(353,-5)
    \Vertex(353,-5){9}
    \Line(353,-5)(353,40)
    \Vertex(353,40){9}
    \Line(395,-52)(395,-5)
    \Vertex(395,-5){9}
  \end{picture}
}}\,}

\def\matulafog{\,{\scalebox{0.15}{
  \begin{picture}(78,88) (339,-215)
    \SetWidth{2}
    \SetColor{Black}
    \Vertex(375,-212){12}
    \Line(375,-212)(410,-175)
    \Line(375,-212)(354,-174)
    \Vertex(353,-171){9}
    \Vertex(410,-171){9}
    \Vertex(410,-131){9}
    \Line(410,-171)(410,-131)
    \Line(353,-171)(330,-131)
    \Line(353,-171)(376,-131)
    \Line(353,-171)(353,-129)
    \Vertex(353,-129){9}
    \Vertex(330,-131){9}
    \Vertex(376,-131){9}
    \Vertex(330,-85){9}
    \Line(330,-131)(330,-85)
  \end{picture}
}}\,}

\def\matulafac{\,{\scalebox{0.15}{
 \begin{picture}(90,46) (330,-257)
    \SetWidth{2}
    \SetColor{Black}
    \Vertex(375,-254){12}
    \Line(375,-254)(395,-215)
    \Vertex(395,-215){9}
    \Line(375,-254)(335,-226)
    \Vertex(334,-224){9}
    \Line(375,-254)(355,-215)
    \Vertex(350,-180){9}
    \Vertex(318,-180){9}
    \Line(334,-224)(318,-180)
    \Line(334,-224)(350,-180)
    \Vertex(355,-215){9}
    \Line(375,-254)(417,-227)
    \Vertex(418,-225){9}
    \Line(375,-254)(375,-212)
    \Vertex(375,-212){9}
  \end{picture}
}}\,}

\def\matulafag{\,{\scalebox{0.15}{
\begin{picture}(58,98) (359,-255)
    \SetWidth{2}
    \SetColor{Black}
    \Vertex(375,-202){9}
    \Vertex(375,-252){12}
    \Line(375,-202)(375,-252)
    \Line(375,-202)(395,-163)
    \Line(375,-202)(354,-163)
    \Vertex(353,-163){9}
    \Vertex(395,-163){9}
    \Line(353,-160)(353,-115)
    \Vertex(353,-115){9}
    \Line(395,-159)(395,-115)
    \Vertex(395,-115){9}
    \Line(375,-202)(430,-163)
    \Vertex(430,-163){9}
     \Line(353,-115)(353,-70)
    \Vertex(353,-70){9}
    \end{picture}
}}\,}

\def\matulafai{\,{\scalebox{0.15}{
  \begin{picture}(86,88) (347,-215)
    \SetWidth{2}
    \SetColor{Black}
    \Vertex(375,-212){12}
    \Line(375,-212)(405,-171)
    \Line(375,-212)(354,-171)
    \Vertex(353,-171){9}
    \Vertex(405,-171){9}
    \Vertex(440,-171){9}
    \Line(440,-171)(375,-212)
    \Vertex(405,-131){9}
    \Line(405,-171)(405,-131)
    \Line(353,-171)(330,-131)
    \Line(353,-171)(376,-131)
    \Line(353,-171)(353,-129)
    \Vertex(353,-129){9}
    \Vertex(330,-131){9}
    \Vertex(376,-131){9}
  \end{picture}
}}\,}

\def\matulafca{\,{\scalebox{0.15}{
 \begin{picture}(63,150) (364,-205)
    \SetWidth{2}
    \SetColor{Black}
    \Line(376,-148)(395,-113)
    \Line(373,-149)(354,-112)
    \Vertex(353,-111){9}
    \Vertex(395,-111){9}
    \Vertex(395,-65){9}
    \Line(395,-111)(395,-65)
    \Line(353,-108)(353,-65)
    \Vertex(353,-65){9}
    \Line(400,-202)(374,-153)
    \Vertex(374,-149){9}
    \Vertex(400,-202){12}
    \Vertex(426,-149){9}
    \Line(400,-200)(426,-153)
    \Vertex(426,-111){9}
    \Line(426,-149)(426,-111)
    \Vertex(426,-65){9}
    \Line(426,-65)(426,-111)
  \end{picture}
}}\,}

\def\matulafda{\,{\scalebox{0.15}{
 \begin{picture}(64,99) (365,-251)
    \SetWidth{2}
    \SetColor{Black}
    \Line(370,-201)(390,-164)
    \Line(370,-201)(349,-164)
    \Vertex(349,-164){9}
    \Vertex(390,-164){9}
    \Vertex(349,-120){9}
    \Line(349,-162)(349,-120)
    \Vertex(349,-75){9}
    \Line(349,-75)(349,-120)
    \Vertex(405,-201){9}
    \Vertex(405,-248){12}
    \Line(405,-248)(370,-204)
    \Line(405,-248)(434,-204)
    \Line(405,-248)(405,-201)
    \Vertex(370,-201){9}
    \Vertex(434,-201){9}
  \end{picture}
}}\,}

\def\matulafdc{\,{\scalebox{0.15}{
 \begin{picture}(73,150) (374,-205)
    \SetWidth{2}
    \SetColor{Black}
    \Line(376,-148)(395,-113)
    \Line(373,-149)(354,-112)
    \Vertex(353,-111){9}
    \Vertex(395,-111){9}
    \Line(353,-108)(353,-65)
    \Vertex(353,-65){9}
    \Line(400,-202)(374,-153)
    \Vertex(374,-149){9}
    \Vertex(400,-202){12}
    \Vertex(426,-149){9}
    \Vertex(470,-149){9}
     \Vertex(470,-111){9}
     \Line(470,-111)(470,-149)
    \Line(400,-202)(470,-149)
    \Line(400,-200)(426,-153)
    \Vertex(426,-111){9}
    \Line(426,-149)(426,-111)
  \end{picture}
}}\,}

\def\matulafdg{\,{\scalebox{0.15}{
 \begin{picture}(48,153) (349,-150)
    \SetWidth{2}
    \SetColor{Black}
    \Vertex(375,-147){12}
    \Line(376,-145)(395,-110)
    \Line(373,-146)(354,-109)
    \Vertex(353,-106){9}
    \Vertex(395,-108){9}
    \Line(353,-105)(353,-58)
    \Vertex(353,-56){9}
    \Line(353,-52)(353,-10)
    \Vertex(353,-10){9}
    \Line(353,-10)(333,35)
    \Vertex(333,35){9}
    \Line(353,-10)(373,35)
    \Vertex(373,35){9}
  \end{picture}
}}\,}

\def\matulafec{\,{\scalebox{0.15}{
\begin{picture}(48,98) (349,-205)
    \SetWidth{2}
    \SetColor{Black}
    \Vertex(375,-202){12}
    \Line(375,-202)(398,-163)
    \Line(375,-202)(342,-163)
    \Vertex(342,-163){9}
    \Vertex(398,-163){9}
    \Line(342,-160)(342,-113)
    \Vertex(342,-111){9}
    \Line(342,-111)(362,-65)
    \Vertex(362,-65){9}
    \Line(342,-111)(322,-65)
    \Vertex(322,-65){9}
    \Line(398,-159)(383,-111)
    \Vertex(383,-111){9}
    \Line(398,-163)(418,-111)
    \Vertex(418,-111){9}
\end{picture}
}}\,}

\def\matulafei{\,{\scalebox{0.15}{
\begin{picture}(48,98) (349,-205)
    \SetWidth{2}
    \SetColor{Black}
    \Vertex(375,-202){12}
    \Line(375,-202)(399,-159)
    \Line(375,-202)(350,-159)
    \Vertex(350,-159){9}
    \Line(375,-202)(325,-163)
    \Vertex(325,-163){9}
    \Vertex(325,-110){9}
    \Line(325,-163)(325,-110)
    \Vertex(325,-65){9}
    \Line(325,-65)(325,-110)
    \Vertex(399,-159){9}
    \Line(375,-202)(375,-158)
    \Vertex(375,-158){9}
    \Vertex(425,-163){9}
    \Vertex(350,-110){9}
    \Line(375,-202)(425,-165)
    \Line(350,-159)(350,-110)
  \end{picture}
}}\,}

\def\matulaffa{\,{\scalebox{0.15}{
 \begin{picture}(48,153) (349,-150)
    \SetWidth{2}
    \SetColor{Black}
    \Vertex(375,-147){12}
    \Line(375,-147)(395,-108)
    \Line(375,-147)(353,-108)
    \Vertex(353,-108){9}
    \Vertex(395,-108){9}
    \Vertex(395,-56){9}
     \Vertex(353,-108){9}
    \Vertex(353,-56){9}
    \Line(353,-108)(353,-56)
    \Line(395,-108)(395,-56)
    \Vertex(353,-56){9}
    \Vertex(395,-5){9}
    \Line(395,-56)(395,-5)
    \Line(353,-56)(353,-5)
    \Vertex(353,-5){9}
    \Line(353,-5)(353,40)
    \Vertex(353,40){9}
    \Line(395,-5)(395,40)
    \Vertex(395,40){9}
  \end{picture}
}}\,}

\def\matulafgc{\,{\scalebox{0.15}{
\begin{picture}(56,98) (357,-253)
    \SetWidth{2}
    \SetColor{Black}
    \Vertex(375,-202){9}
    \Vertex(375,-250){12}
    \Line(375,-250)(375,-202)
    \Vertex(430,-202){9}
    \Line(430,-202)(375,-250)
    \Line(375,-202)(395,-163)
    \Line(375,-202)(354,-163)
    \Vertex(353,-163){9}
    \Vertex(395,-163){9}
    \Line(353,-160)(353,-116)
    \Vertex(353,-116){9}
    \Line(395,-159)(395,-116)
    \Vertex(395,-116){9}
    \Line(375,-202)(430,-163)
    \Vertex(430,-163){9}
\end{picture}
}}\,}

\def\matulafgg{\,{\scalebox{0.15}{
 \begin{picture}(48,153) (349,-150)
    \SetWidth{2}
    \SetColor{Black}
    \Vertex(375,-147){12}
    \Line(375,-147)(395,-108)
    \Line(375,-147)(354,-108)
    \Vertex(353,-108){9}
    \Vertex(395,-108){9}
    \Vertex(395,-56){9}
    \Line(395,-108)(395,-56)
    \Line(353,-108)(353,-58)
    \Vertex(353,-56){9}
    \Line(353,-56)(333,-8)
    \Vertex(333,-8){9}
    \Vertex(333,35){9}
    \Line(333,-8)(333,35)
    \Line(353,-56)(373,-8)
    \Vertex(373,-8){9}
  \end{picture}
}}\,}

\def\matulafhc{\,{\scalebox{0.15}{
\begin{picture}(48,98) (349,-205)
    \SetWidth{2}
    \SetColor{Black}
    \Vertex(375,-202){12}
    \Line(376,-200)(399,-165)
    \Line(373,-201)(354,-164)
    \Vertex(353,-161){9}
    \Vertex(399,-163){9}
    \Line(375,-201)(375,-160)
    \Vertex(376,-157){9}
    \Vertex(353,-115){9}
    \Line(353,-155)(353,-115)
    \Vertex(353,-70){9}
    \Line(353,-115)(353,-70)
    \Vertex(353,-30){9}
    \Line(353,-70)(353,-30)
    \Vertex(353,5){9}
    \Line(353,-30)(353,5)
  \end{picture}
}}\,}

\def\matulafia{\,{\scalebox{0.15}{
 \begin{picture}(76,153) (365,-150)
    \SetWidth{2}
    \SetColor{Black}
    \Vertex(375,-147){12}
    \Line(375,-147)(395,-108)
    \Line(375,-147)(354,-108)
    \Line(375,-147)(437,-108)
    \Vertex(353,-108){9}
    \Vertex(395,-108){9}
    \Vertex(437,-108){9}
    \Vertex(437,-58){9}
    \Vertex(437,-5){9}
    \Line(437,-58)(437,-5)
    \Line(437,-108)(437,-58)
    \Vertex(395,-56){9}
    \Line(353,-105)(353,-58)
    \Line(395,-105)(395,-58)
    \Vertex(353,-58){9}
    \Line(353,-52)(353,-5)
    \Vertex(353,-5){9}
    \Line(395,-52)(395,-5)
    \Vertex(395,-5){9}
  \end{picture}
}}\,}

\def\matulagoa{\,{\scalebox{0.15}{
\begin{picture}(68,98) (369,-205)
    \SetWidth{2}
    \SetColor{Black}
    \Vertex(375,-202){12}
    \Line(375,-202)(390,-163)
    \Line(375,-202)(343,-163)
    \Vertex(343,-163){9}
    \Vertex(390,-163){9}
    \Line(343,-163)(328,-111)
    \Line(343,-163)(358,-111)
    \Vertex(328,-111){9}
    \Line(390,-159)(390,-111)
    \Vertex(390,-111){9}
    \Line(375,-202)(430,-163)
    \Vertex(430,-163){9}
    \Vertex(470,-163){9}
    \Line(470,-163)(375,-202)
    \Vertex(430,-111){9}
    \Vertex(358,-111){9}
    \Line(430,-163)(430,-111)
\end{picture}
}}\,}

\def\matulagoi{\,{\scalebox{0.15}{
\begin{picture}(8,204) (370,-99)
    \SetWidth{2}
    \SetColor{Black}
    \Line(374,-95)(374,-51)
    \Vertex(374,-48){9}
    \Vertex(375,-96){12}
    \Line(374,-44)(374,0)
    \Vertex(374,3){9}
    \Line(374,6)(374,50)
    \Vertex(374,53){9}
    \Line(374,53)(374,95)
    \Vertex(374,95){9}
     \Line(374,95)(374,130)
    \Vertex(374,130){9}
     \Line(374,160)(374,130)
    \Vertex(374,160){9}
  \end{picture}
}}\,}

\def\matulagai{\,{\scalebox{0.15}{
 \begin{picture}(90,46) (330,-257)
    \SetWidth{2}
    \SetColor{Black}
    \Vertex(375,-254){12}
    \Line(375,-254)(395,-190)
    \Vertex(395,-190){9}
    \Line(375,-254)(334,-199)
    \Vertex(334,-199){9}
    \Line(375,-254)(355,-190)
    \Vertex(355,-190){9}
    \Line(375,-254)(418,-199)
    \Vertex(418,-199){9}
    \Line(375,-254)(375,-187)
    \Vertex(375,-187){9}
    \Vertex(438,-212){9}
    \Vertex(314,-212){9}
    \Line(375,-254)(438,-212)
    \Line(375,-254)(314,-212)
  \end{picture}
}}\,}

\def\matulagbg{\,{\scalebox{0.15}{
 \begin{picture}(63,150) (364,-205)
    \SetWidth{2}
    \SetColor{Black}
    \Line(376,-148)(395,-111)
    \Line(373,-149)(354,-111)
    \Vertex(353,-111){9}
    \Vertex(395,-111){9}
    \Line(353,-111)(370,-64)
    \Line(353,-111)(336,-64)
    \Vertex(370,-64){9}
    \Vertex(336,-64){9}
    \Line(405,-202)(374,-153)
    \Vertex(374,-149){9}
    \Vertex(405,-202){12}
    \Vertex(436,-149){9}
    \Line(405,-202)(436,-153)
    \Vertex(436,-111){9}
    \Line(436,-111)(436,-149)
  \end{picture}
}}\,}

\def\matulagcc{\,{\scalebox{0.15}{
 \begin{picture}(68,150) (369,-205)
    \SetWidth{2}
    \SetColor{Black}
    \Line(376,-148)(395,-113)
    \Line(373,-149)(354,-112)
    \Vertex(353,-111){9}
    \Vertex(395,-111){9}
    \Line(353,-108)(353,-61)
    \Vertex(353,-59){9}
    \Line(400,-202)(374,-153)
    \Vertex(374,-149){9}
    \Vertex(400,-202){12}
    \Vertex(426,-149){9}
    \Vertex(466,-149){9}
    \Line(466,-149)(400,-202)
    \Line(400,-200)(426,-153)
    \Vertex(426,-111){9}
    \Vertex(426,-59){9}
    \Line(426,-111)(426,-59)
    \Line(426,-149)(426,-111)
  \end{picture}
}}\,}

\def\matulagci{\,{\scalebox{0.15}{
 \begin{picture}(90,46) (330,-307)
    \SetWidth{2}
    \SetColor{Black}
    \Vertex(375,-254){9}
    \Vertex(375,-304){12}
    \Line(375,-304)(375,-254)
    \Line(375,-254)(395,-215)
    \Vertex(395,-215){9}
    \Line(375,-254)(334,-224)
    \Vertex(334,-224){9}
    \Line(375,-254)(355,-215)
    \Vertex(355,-215){9}
    \Line(375,-254)(418,-224)
    \Vertex(418,-224){9}
    \Line(375,-254)(375,-212)
    \Vertex(375,-212){9}
  \end{picture}
}}\,}

\def\matulagdc{\,{\scalebox{0.15}{
\begin{picture}(68,98) (369,-205)
    \SetWidth{2}
    \SetColor{Black}
    \Vertex(410,-202){12}
    \Line(410,-202)(395,-163)
    \Line(410,-202)(354,-163)
    \Vertex(353,-163){9}
    \Vertex(395,-163){9}
    \Line(353,-160)(353,-115)
    \Vertex(353,-115){9}
    \Line(395,-159)(395,-115)
    \Vertex(395,-115){9}
    \Line(410,-202)(430,-163)
    \Vertex(430,-163){9}
    \Vertex(465,-163){9}
    \Line(465,-163)(410,-202)
     \Line(353,-115)(353,-70)
    \Vertex(353,-70){9}
    \Line(353,-30)(353,-70)
    \Vertex(353,-30){9}
    \end{picture}
}}\,}

\def\matulagea{\,{\scalebox{0.15}{
\begin{picture}(48,98) (349,-205)
    \SetWidth{2}
    \SetColor{Black}
    \Vertex(375,-202){12}
    \Line(375,-202)(402,-163)
    \Line(375,-202)(328,-163)
    \Vertex(328,-163){9}
    \Vertex(402,-163){9}
    \Line(328,-163)(350,-116)
    \Line(328,-163)(306,-116)
    \Vertex(350,-116){9}
    \Vertex(328,-114){9}
    \Line(328,-114)(328,-163)
    \Vertex(306,-116){9}
    \Line(402,-163)(392,-116)
    \Vertex(392,-116){9}
    \Line(402,-163)(420,-116)
    \Vertex(420,-116){9}
\end{picture}
}}\,}

\def\matulageg{\,{\scalebox{0.15}{
 \begin{picture}(48,153) (349,-150)
    \SetWidth{2}
    \SetColor{Black}
    \Vertex(375,-147){12}
    \Line(375,-147)(395,-108)
    \Line(375,-147)(354,-108)
    \Vertex(353,-108){9}
    \Vertex(395,-108){9}
    \Line(353,-108)(353,-56)
    \Vertex(353,-56){9}
    \Line(353,-56)(331,-10)
    \Vertex(331,-10){9}
    \Line(353,-56)(353,-8)
    \Vertex(353,-8){9}
    \Line(353,-56)(375,-10)
    \Vertex(375,-10){9}
  \end{picture}
}}\,}

\def\matulagfa{\,{\scalebox{0.15}{
\begin{picture}(48,98) (349,-205)
    \SetWidth{2}
    \SetColor{Black}
    \Vertex(375,-202){12}
    \Line(375,-202)(395,-165)
    \Line(375,-202)(354,-164)
    \Vertex(353,-163){9}
    \Vertex(310,-163){9}
    \Vertex(310,-113){9}
    \Line(310,-163)(310,-113)
    \Vertex(310,-66){9}
    \Line(310,-66)(310,-113)
    \Line(375,-202)(310,-165)
    \Vertex(395,-163){9}
    \Line(353,-160)(353,-113)
    \Vertex(353,-113){9}
    \Line(395,-159)(395,-113)
    \Vertex(395,-113){9}
    \Line(375,-202)(430,-164)
    \Vertex(430,-163){9}
    \Line(430,-164)(430,-113)
    \Vertex(430,-113){9}
\end{picture}
}}\,}

\def\matulagfi{\,{\scalebox{0.15}{
\begin{picture}(48,98) (349,-205)
    \SetWidth{2}
    \SetColor{Black}
    \Vertex(375,-202){12}
    \Line(375,-202)(399,-161)
    \Line(375,-202)(354,-164)
    \Vertex(350,-163){9}
    \Vertex(399,-161){9}
    \Line(375,-201)(375,-160)
    \Vertex(376,-161){9}
    \Vertex(425,-163){9}
    \Vertex(350,-110){9}
    \Line(375,-202)(425,-165)
    \Line(350,-161)(350,-110)
    \Line(350,-110)(330,-65)
    \Vertex(330,-65){9}
    \Line(350,-110)(370,-65)
    \Vertex(370,-65){9}
  \end{picture}
}}\,}

\def\matulaggc{\,{\scalebox{0.15}{
 \begin{picture}(48,153) (349,-200)
    \SetWidth{2}
    \SetColor{Black}
    \Vertex(375,-147){9}
    \Vertex(375,-197){12}
    \Line(375,-197)(375,-147)
    \Line(375,-147)(395,-108)
    \Line(375,-147)(354,-108)
    \Vertex(353,-108){9}
    \Vertex(395,-108){9}
    \Vertex(395,-56){9}
    \Line(353,-108)(353,-56)
    \Line(395,-108)(395,-56)
    \Vertex(353,-56){9}
    \Line(353,-56)(353,-5)
    \Vertex(353,-5){9}
    \Line(353,-5)(353,40)
    \Vertex(353,40){9}
  \end{picture}
}}\,}

\def\matulaghg{\,{\scalebox{0.15}{
 \begin{picture}(68,150) (369,-205)
    \SetWidth{2}
    \SetColor{Black}
    \Line(376,-148)(395,-113)
    \Line(373,-149)(354,-112)
    \Vertex(353,-111){9}
    \Vertex(395,-111){9}
    \Vertex(395,-65){9}
    \Line(395,-111)(395,-65)
    \Line(353,-108)(353,-65)
    \Vertex(353,-65){9}
    \Line(400,-202)(374,-153)
    \Vertex(374,-149){9}
    \Vertex(400,-202){12}
    \Vertex(426,-149){9}
    \Vertex(466,-149){9}
    \Line(466,-149)(400,-202)
    \Line(400,-200)(426,-153)
    \Vertex(426,-111){9}
    \Line(426,-149)(426,-111)
  \end{picture}
}}\,}

\def\matulagig{\,{\scalebox{0.15}{
 \begin{picture}(48,153) (349,-200)
    \SetWidth{2}
    \SetColor{Black}
    \Vertex(375,-147){9}
    \Vertex(375,-197){12}
    \Line(375,-197)(375,-147)
    \Line(375,-147)(395,-108)
    \Line(375,-147)(354,-108)
    \Vertex(353,-108){9}
    \Vertex(395,-108){9}
    \Line(353,-108)(353,-56)
    \Vertex(353,-56){9}
    \Line(353,-56)(333,-8)
    \Vertex(333,-8){9}
    \Line(353,-56)(373,-8)
    \Vertex(373,-8){9}
  \end{picture}
}}\,}

\def\matulahoi{\,{\scalebox{0.15}{
\begin{picture}(70,98) (371,-205)
    \SetWidth{2}
    \SetColor{Black}
    \Vertex(395,-202){12}
    \Line(395,-202)(395,-165)
    \Line(395,-202)(348,-165)
    \Vertex(348,-163){9}
    \Vertex(395,-163){9}
    \Line(348,-160)(348,-113)
    \Vertex(348,-113){9}
    \Line(348,-113)(348,-70)
    \Vertex(348,-70){9}
    \Line(395,-159)(380,-113)
    \Vertex(380,-113){9}
    \Line(395,-163)(415,-113)
    \Vertex(415,-113){9}
    \Line(395,-202)(430,-165)
    \Vertex(430,-165){9}
    \Vertex(465,-165){9}
    \Line(465,-165)(395,-202)
\end{picture}
}}\,}

\def\matulahaa{\,{\scalebox{0.15}{
 \begin{picture}(54,150) (355,-205)
    \SetWidth{2}
    \SetColor{Black}
    \Line(374,-149)(395,-111)
    \Line(374,-149)(354,-111)
    \Vertex(353,-111){9}
    \Vertex(395,-111){9}
    \Line(353,-111)(353,-59)
    \Vertex(353,-59){9}
    \Line(353,-14)(353,-59)
    \Vertex(353,-14){9}
    \Line(395,-202)(374,-149)
    \Line(395,-202)(420,-149)
    \Vertex(374,-149){9}
    \Vertex(420,-149){9}
    \Vertex(420,-111){9}
    \Line(420,-149)(415,-111)
    \Vertex(395,-202){12}
    \Line(395,-108)(395,-61)
    \Vertex(395,-59){9}
  \end{picture}
}}\,}

\def\matulahba{\,{\scalebox{0.15}{
  \begin{picture}(68,88) (329,-215)
    \SetWidth{2}
    \SetColor{Black}
    \Vertex(375,-212){12}
    \Line(376,-210)(395,-175)
    \Line(373,-211)(354,-174)
    \Vertex(353,-171){9}
    \Vertex(395,-171){9}
    \Line(353,-171)(330,-131)
    \Line(353,-171)(376,-131)
    \Line(353,-171)(353,-129)
    \Vertex(353,-129){9}
    \Vertex(330,-131){9}
    \Vertex(376,-131){9}
    \Vertex(330,-85){9}
    \Line(330,-131)(330,-85)
    \Vertex(330,-45){9}
    \Line(330,-45)(330,-85)
  \end{picture}
}}\,}

\def\matulahbc{\,{\scalebox{0.15}{
 \begin{picture}(58,150) (359,-205)
    \SetWidth{2}
    \SetColor{Black}
    \Line(376,-148)(395,-113)
    \Line(373,-149)(354,-112)
    \Vertex(353,-111){9}
    \Vertex(395,-111){9}
    \Line(353,-108)(353,-65)
    \Vertex(353,-65){9}
    \Line(400,-202)(374,-153)
    \Vertex(374,-149){9}
    \Vertex(400,-202){12}
    \Vertex(426,-149){9}
    \Line(400,-200)(426,-153)
    \Vertex(426,-111){9}
    \Vertex(426,-65){9}
    \Line(426,-111)(426,-65)
    \Vertex(426,-19){9}
    \Line(426,-19)(426,-65)
    \Line(426,-149)(426,-111)
  \end{picture}
}}\,}

\def\matulahbg{\,{\scalebox{0.15}{
 \begin{picture}(95,46) (335,-257)
    \SetWidth{2}
    \SetColor{Black}
    \Vertex(385,-254){12}
    \Line(385,-254)(395,-202)
    \Vertex(395,-202){9}
    \Line(385,-254)(335,-215)
    \Vertex(335,-215){9}
    \Vertex(335,-160){9}
    \Line(335,-215)(335,-160)
    \Vertex(356,-160){9}
    \Line(356,-215)(356,-160)
    \Line(385,-254)(356,-206)
    \Vertex(356,-206){9}
    \Line(385,-254)(417,-206)
    \Vertex(417,-206){9}
    \Line(385,-254)(375,-202)
    \Vertex(375,-202){9}
     \Line(385,-254)(439,-215)
    \Vertex(439,-215){9}
  \end{picture}
}}\,}

\def\matulahbi{\,{\scalebox{0.15}{
 \begin{picture}(58,150) (359,-205)
    \SetWidth{2}
    \SetColor{Black}
    \Line(376,-148)(395,-113)
    \Line(373,-149)(354,-112)
    \Vertex(353,-111){9}
    \Vertex(395,-111){9}
    \Line(353,-108)(353,-65)
    \Vertex(353,-65){9}
    \Line(400,-202)(374,-153)
    \Vertex(374,-149){9}
    \Vertex(400,-202){12}
    \Vertex(426,-149){9}
    \Line(400,-200)(426,-153)
    \Vertex(426,-111){9}
    \Vertex(426,-65){9}
    \Line(426,-111)(426,-65)
    \Vertex(353,-19){9}
    \Line(353,-19)(353,-65)
    \Line(426,-149)(426,-111)
  \end{picture}
}}\,}

\def\matulahci{\,{\scalebox{0.15}{
 \begin{picture}(58,150) (359,-205)
    \SetWidth{2}
    \SetColor{Black}
    \Line(376,-148)(395,-111)
    \Line(373,-149)(354,-111)
    \Vertex(353,-111){9}
    \Vertex(395,-111){9}
    \Line(353,-111)(370,-64)
    \Line(353,-111)(336,-64)
    \Vertex(370,-64){9}
    \Vertex(336,-64){9}
    \Line(395,-202)(374,-149)
    \Vertex(374,-149){9}
    \Vertex(395,-64){9}
    \Line(395,-111)(395,-64)
    \Vertex(395,-202){12}
    \Vertex(416,-149){9}
    \Line(395,-200)(416,-153)
  \end{picture}
}}\,}

\def\matulahec{\,{\scalebox{0.15}{
\begin{picture}(61,98) (361,-205)
    \SetWidth{2}
    \SetColor{Black}
    \Vertex(375,-202){12}
    \Line(375,-202)(402,-163)
    \Line(375,-202)(348,-163)
    \Vertex(348,-163){9}
    \Vertex(402,-163){9}
    \Vertex(448,-163){9}
    \Vertex(448,-116){9}
    \Line(448,-163)(448,-116)
    \Line(448,-163)(375,-202)
    \Line(348,-163)(360,-116)
    \Line(348,-163)(332,-116)
    \Vertex(360,-116){9}
    \Vertex(332,-116){9}
    \Line(402,-163)(392,-116)
    \Vertex(392,-116){9}
    \Line(402,-163)(420,-116)
    \Vertex(420,-116){9}
\end{picture}
}}\,}

\def\matulaheg{\,{\scalebox{0.15}{
  \begin{picture}(78,88) (339,-215)
    \SetWidth{2}
    \SetColor{Black}
    \Vertex(375,-212){12}
    \Line(375,-212)(385,-171)
    \Line(375,-212)(353,-171)
    \Vertex(353,-171){9}
    \Vertex(385,-171){9}
    \Vertex(417,-171){9}
    \Line(375,-212)(417,-171)
    \Line(353,-171)(330,-131)
    \Line(353,-171)(376,-131)
    \Line(353,-171)(353,-129)
    \Vertex(353,-129){9}
    \Vertex(330,-131){9}
    \Vertex(330,-86){9}
    \Line(330,-131)(330,-86)
    \Vertex(376,-131){9}
  \end{picture}
}}\,}

\def\matulahei{\,{\scalebox{0.15}{
\begin{picture}(48,98) (349,-255)
    \SetWidth{2}
    \SetColor{Black}
    \Vertex(375,-202){9}
    \Vertex(375,-252){12}
    \Line(375,-252)(375,-202)
    \Line(375,-202)(395,-163)
    \Line(375,-202)(348,-163)
    \Vertex(348,-163){9}
    \Vertex(395,-163){9}
    \Line(348,-163)(348,-111)
    \Vertex(348,-111){9}
    \Line(348,-111)(348,-65)
    \Vertex(348,-65){9}
    \Line(395,-163)(380,-111)
    \Vertex(380,-111){9}
    \Line(395,-163)(415,-111)
    \Vertex(415,-111){9}
\end{picture}
}}\,}

\def\matulahfc{\,{\scalebox{0.15}{
\begin{picture}(68,98) (369,-205)
    \SetWidth{2}
    \SetColor{Black}
    \Vertex(410,-202){12}
    \Line(410,-202)(395,-165)
    \Line(410,-202)(354,-164)
    \Vertex(353,-163){9}
    \Vertex(395,-163){9}
    \Line(353,-160)(353,-115)
    \Vertex(353,-115){9}
    \Line(395,-159)(395,-115)
    \Vertex(395,-115){9}
    \Line(395,-115)(395,-70)
    \Vertex(395,-70){9}
    \Line(410,-202)(430,-164)
    \Vertex(430,-163){9}
     \Vertex(430,-115){9}
     \Line(430,-163)(430,-115)
    \Vertex(465,-163){9}
    \Line(465,-163)(410,-202)
     \Line(353,-115)(353,-70)
    \Vertex(353,-70){9}
    \end{picture}
}}\,}

\def\matulahgg{\,{\scalebox{0.15}{
\begin{picture}(58,98) (359,-255)
    \SetWidth{2}
    \SetColor{Black}
    \Vertex(375,-202){9}
    \Vertex(375,-252){12}
    \Line(375,-252)(375,-202)
    \Line(375,-202)(390,-163)
    \Line(375,-202)(349,-163)
    \Vertex(349,-163){9}
    \Vertex(390,-163){9}
    \Line(349,-163)(349,-111)
    \Vertex(349,-111){9}
    \Line(390,-163)(390,-111)
    \Vertex(390,-111){9}
    \Line(375,-202)(425,-163)
    \Vertex(425,-163){9}
    \Line(375,-202)(460,-163)
    \Vertex(460,-163){9}
\end{picture}
}}\,}

\def\matulahha{\,{\scalebox{0.15}{
\begin{picture}(48,98) (349,-205)
    \SetWidth{2}
    \SetColor{Black}
    \Vertex(375,-202){12}
    \Line(375,-202)(399,-161)
    \Line(375,-202)(350,-163)
    \Vertex(350,-163){9}
    \Vertex(399,-161){9}
    \Line(375,-202)(375,-160)
    \Vertex(375,-161){9}
    \Vertex(425,-163){9}
    \Vertex(372,-112){9}
    \Vertex(328,-112){9}
    \Vertex(350,-110){9}
    \Line(375,-202)(425,-165)
    \Line(350,-161)(328,-112)
    \Line(350,-161)(372,-112)
    \Line(350,-161)(350,-110)
  \end{picture}
}}\,}

\def\matulahhc{\,{\scalebox{0.15}{
 \begin{picture}(48,153) (349,-150)
    \SetWidth{2}
    \SetColor{Black}
    \Vertex(375,-147){12}
    \Vertex(430,-108){9}
    \Vertex(430,-58){9}
    \Line(430,-108)(430,-58)
    \Line(375,-147)(430,-108)
    \Line(376,-145)(395,-108)
    \Line(373,-146)(354,-108)
    \Vertex(353,-108){9}
    \Vertex(395,-108){9}
    \Vertex(395,-58){9}
    \Line(395,-108)(395,-58)
    \Line(353,-108)(353,-58)
    \Vertex(353,-56){9}
    \Line(353,-56)(333,-8)
    \Vertex(333,-8){9}
    \Line(353,-56)(373,-8)
    \Vertex(373,-8){9}
  \end{picture}
}}\,}

\def\matulahhg{\,{\scalebox{0.15}{
\begin{picture}(58,98) (359,-205)
    \SetWidth{2}
    \SetColor{Black}
    \Vertex(375,-202){12}
    \Line(376,-200)(395,-165)
    \Line(373,-201)(348,-165)
    \Vertex(348,-163){9}
    \Vertex(395,-163){9}
    \Line(348,-160)(348,-113)
    \Vertex(348,-113){9}
    \Line(348,-113)(348,-70)
    \Vertex(348,-30){9}
    \Line(348,-30)(348,-70)
    \Vertex(348,-70){9}
    \Line(395,-159)(380,-113)
    \Vertex(380,-113){9}
    \Line(395,-163)(415,-113)
    \Vertex(415,-113){9}
    \Line(375,-202)(430,-165)
    \Vertex(430,-165){9}
\end{picture}
}}\,}

\def\matulaiog{\,{\scalebox{0.15}{
 \begin{picture}(48,190) (349,-150)
    \SetWidth{2}
    \SetColor{Black}
    \Vertex(375,-147){12}
    \Line(376,-145)(395,-108)
    \Line(373,-146)(354,-108)
    \Vertex(353,-108){9}
    \Vertex(395,-108){9}
    \Vertex(395,-56){9}
    \Vertex(395,-5){9}
    \Line(395,-56)(395,-5)
    \Line(353,-105)(353,-58)
    \Line(395,-105)(395,-58)
    \Vertex(353,-56){9}
    \Line(353,-52)(353,-5)
    \Vertex(353,-5){9}
    \Line(353,-5)(353,40)
    \Vertex(353,40){9}
    \Line(353,40)(353,80)
    \Vertex(353,80){9}
  \end{picture}
}}\,}

\def\matulaiaa{\,{\scalebox{0.15}{
\begin{picture}(62,98) (363,-205)
    \SetWidth{2}
    \SetColor{Black}
    \Vertex(375,-202){12}
    \Line(375,-202)(390,-165)
    \Line(375,-202)(349,-164)
    \Vertex(349,-163){9}
    \Vertex(390,-163){9}
    \Line(349,-163)(332,-111)
    \Line(349,-163)(366,-111)
    \Vertex(332,-111){9}
    \Vertex(332,-61){9}
    \Line(332,-61)(332,-111)
    \Vertex(366,-111){9}
    \Line(390,-159)(390,-112)
    \Vertex(390,-111){9}
    \Line(375,-202)(425,-164)
    \Vertex(425,-163){9}
    \Line(375,-202)(460,-164)
    \Vertex(460,-163){9}
\end{picture}
}}\,}

\def\matulaiai{\,{\scalebox{0.15}{
 \begin{picture}(48,99) (349,-301)
    \SetWidth{2}
    \SetColor{Black}
    \Line(376,-199)(399,-160)
    \Line(373,-200)(353,-160)
    \Vertex(353,-160){9}
    \Vertex(399,-160){9}
    \Vertex(375,-156){9}
    \Vertex(375,-248){9}
    \Vertex(375,-298){12}
    \Line(375,-298)(375,-248)
    \Line(375,-248)(375,-201)
    \Line(375,-201)(375,-156)
    \Line(353,-160)(353,-115)
    \Vertex(353,-115){9}
    \Vertex(375,-201){9}
  \end{picture}
}}\,}

\def\matulaibi{\,{\scalebox{0.15}{
 \begin{picture}(58,150) (359,-205)
    \SetWidth{2}
    \SetColor{Black}
    \Line(376,-148)(395,-111)
    \Line(373,-149)(354,-111)
    \Vertex(353,-111){9}
    \Vertex(395,-111){9}
    \Line(353,-111)(353,-59)
    \Vertex(353,-59){9}
    \Line(353,-15)(353,25)
    \Vertex(353,25){9}
    \Line(353,-59)(353,-15)
    \Vertex(353,-15){9}
    \Line(395,-202)(374,-149)
    \Vertex(374,-149){9}
    \Vertex(395,-202){12}
    \Vertex(416,-149){9}
    \Line(395,-202)(416,-149)
  \end{picture}
}}\,}

\def\matulaicg{\,{\scalebox{0.15}{
 \begin{picture}(100,46) (340,-300)
    \SetWidth{2}
    \SetColor{Black}
    \Vertex(410,-300){12}
    \Line(410,-300)(375,-254)
    \Vertex(375,-254){9}
    \Vertex(445,-254){9}
    \Vertex(445,-215){9}
    \Line(445,-254)(445,-215)
    \Line(410,-300)(445,-254)
    \Line(375,-252)(395,-217)
    \Vertex(395,-215){9}
    \Line(375,-254)(335,-226)
    \Vertex(334,-224){9}
    \Line(375,-252)(356,-215)
    \Vertex(355,-215){9}
    \Line(375,-255)(417,-227)
    \Vertex(418,-225){9}
  \end{picture}
}}\,}

\def\matulaida{\,{\scalebox{0.15}{
 \begin{picture}(95,46) (335,-257)
    \SetWidth{2}
    \SetColor{Black}
    \Vertex(385,-254){12}
    \Line(385,-254)(395,-202)
    \Vertex(395,-202){9}
    \Line(385,-254)(335,-215)
    \Vertex(335,-215){9}
    \Vertex(335,-165){9}
    \Line(335,-215)(335,-165)
    \Vertex(335,-120){9}
    \Line(335,-120)(335,-165)
    \Line(385,-254)(356,-206)
    \Vertex(356,-206){9}
    \Line(385,-254)(417,-206)
    \Vertex(417,-206){9}
    \Line(385,-254)(375,-202)
    \Vertex(375,-202){9}
     \Line(385,-254)(439,-215)
    \Vertex(439,-215){9}
  \end{picture}
}}\,}

\def\matulaidg{\,{\scalebox{0.15}{
\begin{picture}(48,98) (349,-205)
    \SetWidth{2}
    \SetColor{Black}
    \Vertex(375,-202){12}
    \Line(375,-202)(402,-163)
    \Line(375,-202)(348,-163)
    \Vertex(348,-163){9}
    \Vertex(402,-163){9}
    \Line(348,-163)(360,-116)
    \Line(348,-163)(332,-116)
    \Vertex(360,-116){9}
    \Vertex(360,-71){9}
    \Line(360,-116)(360,-71)
    \Vertex(332,-116){9}
    \Vertex(332,-71){9}
    \Line(332,-116)(332,-71)
    \Line(402,-163)(392,-116)
    \Vertex(392,-116){9}
    \Line(402,-163)(420,-116)
    \Vertex(420,-116){9}
\end{picture}
}}\,}

\def\matulaiec{\,{\scalebox{0.15}{
\begin{picture}(48,98) (349,-205)
    \SetWidth{2}
    \SetColor{Black}
    \Vertex(375,-202){12}
    \Line(375,-202)(399,-159)
    \Line(375,-202)(350,-159)
    \Vertex(350,-159){9}
    \Line(375,-202)(325,-163)
    \Vertex(325,-163){9}
    \Vertex(325,-110){9}
    \Line(325,-163)(325,-110)
    \Vertex(399,-159){9}
    \Vertex(399,-110){9}
    \Line(399,-159)(399,-110)
    \Line(375,-202)(375,-158)
    \Vertex(375,-158){9}
    \Vertex(375,-110){9}
    \Line(375,-158)(375,-110)
    \Vertex(425,-163){9}
    \Vertex(350,-110){9}
    \Line(375,-202)(425,-165)
    \Line(350,-159)(350,-110)
  \end{picture}
}}\,}

\def\matulaifg{\,{\scalebox{0.15}{
  \begin{picture}(68,88) (329,-265)
    \SetWidth{2}
    \SetColor{Black}
    \Vertex(375,-212){9}
    \Vertex(375,-262){12}
    \Line(375,-262)(375,-212)
    \Line(375,-212)(395,-175)
    \Line(375,-212)(354,-174)
    \Vertex(353,-171){9}
    \Vertex(395,-171){9}
    \Line(353,-171)(330,-131)
    \Line(353,-171)(376,-131)
    \Line(353,-171)(353,-129)
    \Vertex(353,-129){9}
    \Vertex(330,-131){9}
    \Vertex(376,-131){9}
  \end{picture}
}}\,}

\def\matulaiga{\,{\scalebox{0.15}{
 \begin{picture}(55,153) (356,-150)
    \SetWidth{2}
    \SetColor{Black}
    \Vertex(375,-147){12}
    \Line(375,-147)(395,-108)
    \Line(375,-147)(354,-108)
    \Vertex(353,-108){9}
    \Vertex(395,-108){9}
    \Vertex(430,-108){9}
    \Line(375,-147)(430,-108)
    \Line(353,-108)(353,-58)
    \Vertex(353,-56){9}
    \Line(353,-56)(333,-8)
    \Vertex(333,-8){9}
    \Vertex(333,35){9}
    \Line(333,-8)(333,35)
    \Line(353,-56)(373,-8)
    \Vertex(373,-8){9}
  \end{picture}
}}\,}

\def\matulaigg{\,{\scalebox{0.15}{
 \begin{picture}(58,153) (359,-150)
    \SetWidth{2}
    \SetColor{Black}
    \Vertex(375,-147){12}
    \Line(375,-147)(395,-108)
    \Line(375,-147)(354,-108)
    \Line(375,-147)(437,-108)
    \Vertex(353,-108){9}
    \Vertex(395,-108){9}
    \Vertex(437,-108){9}
    \Vertex(437,-56){9}
    \Line(437,-108)(437,-56)
    \Vertex(395,-56){9}
    \Line(353,-105)(353,-58)
    \Line(395,-105)(395,-58)
    \Vertex(353,-58){9}
    \Line(353,-52)(353,-5)
    \Vertex(353,-5){9}
    \Line(353,-5)(353,40)
    \Vertex(353,40){9}
    \Line(395,-52)(395,-5)
    \Vertex(395,-5){9}
  \end{picture}
}}\,}

\def\matulaihc{\,{\scalebox{0.15}{
 \begin{picture}(48,153) (349,-150)
    \SetWidth{2}
    \SetColor{Black}
    \Vertex(375,-147){12}
    \Line(375,-147)(395,-108)
    \Line(375,-147)(354,-108)
    \Vertex(353,-108){9}
    \Vertex(395,-108){9}
    \Line(353,-108)(353,-58)
    \Vertex(353,-56){9}
    \Line(353,-56)(333,-8)
    \Vertex(333,-8){9}
    \Vertex(333,35){9}
    \Line(333,-8)(333,35)
    \Line(353,-56)(373,-8)
    \Vertex(373,-8){9}
     \Vertex(373,35){9}
     \Line(373,-8)(373,35)
  \end{picture}
}}\,}

\def\matulaiia{\,{\scalebox{0.15}{
 \begin{picture}(63,150) (364,-255)
    \SetWidth{2}
    \SetColor{Black}
    \Line(374,-149)(395,-111)
    \Line(374,-149)(354,-111)
    \Vertex(353,-111){9}
    \Vertex(395,-111){9}
    \Line(353,-108)(353,-65)
    \Vertex(353,-65){9}
    \Line(400,-202)(374,-153)
    \Vertex(374,-149){9}
    \Vertex(400,-202){9}
    \Vertex(400,-252){12}
    \Line(400,-252)(400,-202)
    \Vertex(426,-149){9}
    \Line(400,-200)(426,-153)
    \Vertex(426,-111){9}
    \Line(426,-149)(426,-111)
  \end{picture}
}}\,}

\def\matulaiig{\,{\scalebox{0.15}{
\begin{picture}(78,98) (379,-216)
    \SetWidth{2}
    \SetColor{Black}
    \Vertex(415,-213){12}
    \Line(415,-213)(390,-165)
    \Line(415,-213)(349,-164)
    \Vertex(349,-163){9}
    \Vertex(390,-163){9}
    \Line(349,-163)(332,-111)
    \Line(349,-163)(366,-111)
    \Vertex(332,-111){9}
    \Vertex(366,-111){9}
    \Line(390,-159)(390,-112)
    \Vertex(390,-111){9}
    \Line(415,-213)(425,-164)
    \Vertex(425,-163){9}
    \Line(415,-213)(460,-164)
    \Vertex(460,-163){9}
    \Vertex(495,-163){9}
    \Line(415,-213)(495,-163)
\end{picture}
}}\,}

\def\couplages{\,{\scalebox{0.6}{
 \begin{picture}(58,253) (359,-50)
    \SetWidth{1}
    \SetColor{Black}
    \GOval(181.98,51.26)(11.96,11.96)(0){0.882}
    \GOval(405,107)(11.96,11.96)(0){0.882}
    \GOval(702,108)(11.96,11.96)(0){0.882}
    \Vertex(52.97,146.95){6.67}
    \GBox(43.57,82.02)(60.66,98.25){0.882}
    \Vertex(52.12,88.85){4.36}
    \Gluon(52.97,146.1)(52.97,90.56){3.41}{8.21}
    \Vertex(574.48,146.95){6.67}
    \Vertex(675.39,147.81){6.67}
    \Vertex(161.47,146.1){6.67}
    \Vertex(181.98,102.52){4.36}
    \Line(181.98,101.67)(181.98,54.68)
    \Vertex(181.98,50.41){4.36}
    \Gluon(162.33,146.1)(181.98,103.38){3.41}{6.5}
    \Vertex(267.42,146.95){6.67}
    \Vertex(267.42,48.7){4.36}
    \GBox(299.03,74.33)(313.55,116.19){0.882}
    \Gluon(268.27,146.95)(268.27,49.55){3.41}{13.71}
    \Line(268.27,47.84)(305.01,106.8)
    \Vertex(306.72,106.8){4.36}
    \Vertex(370.79,146.1){6.67}
    \Vertex(369.94,47.84){4.36}
    \Line(358.83,42.72)(382.76,42.72)
    \Vertex(405.82,106.8){4.36}
    \Gluon(371.65,144.39)(370.79,48.7){3.41}{13.57}
    \Line(370.79,46.99)(405.82,107.65)
    \Vertex(467.34,146.95){6.67}
    \Vertex(469.05,-5.13){4.36}
    \Vertex(497.24,81.16){4.36}
    \GOval(498.09,30.76)(11.96,11.96)(0){0.882}
    \Vertex(499.8,29.9){4.36}
    \Gluon(468.19,146.95)(468.19,-3.42){6.41}{10.64}
    \Line(469.9,-5.13)(499.8,30.76)
    \Vertex(511.76,118.76){4.36}
    \Line(512.62,118.76)(498.09,82.02)
    \DashLine(468.19,146.95)(497.24,82.02){1.71}
    \DashLine(497.24,81.16)(498.95,31.61){1.71}
    \Vertex(598.4,119.61){4.36}
    \Vertex(583.88,94.83){4.36}
    \GOval(565.94,51.26)(11.96,11.96)(0){0.882}
    \Vertex(567.65,50.41){4.36}
    \Vertex(557.4,94.83){4.36}
    \Line(583.88,94.83)(598.4,119.61)
    \Line(568.5,49.55)(558.25,94.83)
    \Gluon(575.34,146.1)(558.25,94.83){6.41}{3.09}
    \Vertex(677.96,69.2){4.36}
    \Vertex(701.88,108.5){4.36}
    \GBox(669.41,14.52)(686.5,30.76){0.882}
    \Vertex(678.81,21.36){4.36}
    \Line(677.1,70.06)(700.17,107.65)
    \Gluon(676.25,147.81)(677.96,69.2){3.41}{10}
    \DashLine(678.81,68.35)(678.81,20.5){1.71}
    \DashLine(577.9,147.81)(583.88,95.69){1.71}
    \DashLine(583.88,95.69)(567.65,50.41){1.71}
    \Text(49,165)[lb]{\Large{\Black{$1$}}}
    \Text(158,165)[lb]{\Large{\Black{$2$}}}
    \Text(265,165)[lb]{\Large{\Black{$3$}}}
    \Text(367,165)[lb]{\Large{\Black{$4$}}}
    \Text(464,165)[lb]{\Large{\Black{$5$}}}
    \Text(571,165)[lb]{\Large{\Black{$6$}}}
    \Text(673,165)[lb]{\Large{\Black{$7$}}}  \end{picture}
}}\,}

\begin{document}

\title[arbres enracin\'es et nombres]{Une remarque sur l'arborification de Matula}

\author{Dominique Manchon}
\address{Laboratoire de Math\'ematiques Blaise Pascal, UMR6620,
	CNRS - Universit\'e Clermont Auvergne,
         3 place Vasar\'ely, CS 60026
         63178 Aubi\`ere, France}       
         \email{Dominique.Manchon@uca.fr}
         \urladdr{https://dominiquemanchon.go.yj.fr/}

\date{3 f\'evrier 2026}
\begin{abstract}
Nous esquissons une application de l'arborification de Matula \`a l'\'etude de la fonction sommatoire des fonctions de M\" obius et de Liouville sur les entiers naturels.
\end{abstract}

\begin{altabstract}
We sketch an application of Matula's arborification to the study of the partial sums of both M\" obius and Liouville function.
\end{altabstract}

\maketitle


\tableofcontents

\section{Introduction}
\label{sect:intro}
Deux ans avant la parution du c\'el\`ebre m\'emoire de B. Riemann \cite{R1859} dans lequel il formule pour la premi\`ere fois l'hypoth\`ese qui porte son nom (les z\'eros non triviaux de la fonction z\^eta ont tous, conjecturalement, une partie r\'eelle \'egale \`a $1/2$), A. Cayley introduisait les arbres enracin\'es dans le but d'\'etudier les champs de vecteurs sur un espace affine \cite{Cay}. La correspondance qu'il introduit est maintenant parfaitement comprise en termes de structures pr\'e-Lie sur l'espace des champs de vecteurs ainsi que sur l'espace vectoriel engendr\'e par les arbres \cite{ChaLiv, DL}, et a trouv\'e des applications inattendues en analyse num\'erique \cite{B72}.\\

Un arbre enracin\'e est un graphe orient\'e connexe, que nous supposerons \`a nombre fini de sommets, tel que chaque sommet admet une et une seule ar\^ete entrante, sauf un sommet particulier, la \textsl{racine}, qui n'admet aucune ar\^ete entrante. Nous dessinerons les arbres enracin\'es avec la racine en bas, les ar\^etes \'etant orient\'ees de bas en haut. Une for\^et est une collection finie d'arbres enracin\'es, avec r\'ep\'etitions possibles.\\

A notre connaissance, le premier \`a avoir introduit les arbres enracin\'es en th\'eorie des nombres est D. W. Ma\-tula en 1968 \cite{M, MC19}, qui a construit un isomorphisme $\Cal A$ de mono\"\i des commutatifs entre l'ensemble $\N^*=\{1,2,3,\ldots,n,\ldots\}$ des entiers naturels non nuls et l'ensemble $\Cal F$ des for\^ets d'arbres enracin\'es, chaque facteur premier correspondant \`a un arbre constituant la for\^et\footnote{Dans sa formulation originale, la correspondance de Matula associe un arbre \`a chaque nombre. Nous avons choisi d'associer plut\^ot la for\^et obtenue en rassemblant toutes les branches constituant l'arbre par greffe sur la racine.}. Ecrit de mani\`ere tr\`es condens\'ee (une quinzaine de lignes tout au plus), l'article de Matula semble \^etre pass\'e inaper\c cu dans un premier temps. Sa correspondance a \'et\'e red\'ecouverte notamment par D. F. Goebel \cite G, P. Cappello  \cite{C88}, ou encore J. Sousselier, ce dernier proposant un expos\'e assez d\'etaill\'e \cite{S99}. On peut aussi citer L. Alexandrov \cite{A98}, et plus r\'ecemment R. G. Batchko \cite{B14}.  Ces auteurs explorent des notions voisines, bien que les arbres ne figurent pas explicitement dans leurs travaux.  Cette bijection a toutefois \'et\'e popularis\'ee par S. B. Elk, I. Gutman et A. Ivi\'c en 1993, dans un article o\`u ces auteurs l'utilisent pour coder les mol\'ecules des alcanes $C_nH_{2n+2}$ en chimie organique \cite{EGI}. L'expression "nombres de Matula" ("Matula numbers") appara\^\i t \`a cette occasion. Notons que de nombreuses caract\'eristiques d'un arbre enracin\'e donn\'e peuvent se traduire sur le nombre de Matula correspondant \cite{D, GN}. Un lien avec la m\'ecanique quantique a \'et\'e sugg\'er\'e dans \cite{N}.\\

\noindent
L'isomorphisme de Matula est d\'efini r\'ecursivement par $\Cal A(1)=\emptyset$ (la for\^et vide) et :
\begin{equation}
\Cal A(p_n)=B_+\big(\Cal A(n)\big),
\end{equation}
o\`u $p_n$ d\'esigne le $n$-i\`eme nombre premier, et o\`u $B_+$ est la bijection canonique des for\^ets vers les arbres qui consiste \`a rajouter une racine commune \`a tous les arbres de la for\^et. Par convention $B_+$ envoie la for\^et vide sur l'arbre \`a un seul sommet $\racine$. Ci-dessous la liste des entiers de $1$ \`a $20$ et les for\^ets correspondantes :
\begin{equation*}
\begin{disarray}{c|cccccccccccccccccccc}
n&1&2&3&4&5&6&7&8
&9&10&11&12&13&14&15&16&17&18&19&20\\
&&&&&&&&&&&&&&&&&&&&\\
\Cal A(n)&\o &\matulab &\matulac &\matulab\matulab&\matulae &\matulac\matulab &\matulag&\matulab\matulab\matulab
&\matulac\matulac &\matulae\matulab &\matulaaa &\matulac\matulab\matulab &\matulaac &\matulag\matulab &\matulae\matulac &\matulab\matulab\matulab\matulab&\matulaag&\matulac\matulac\matulab&\matulaai&\matulae\matulab\matulab\\ 
\end{disarray}
\end{equation*}
\vskip 3mm\noindent
Ci-dessous la liste des arbres enracin\'es \`a $1$, $2$, $3$, $4$ et $5$ sommets, et les nombres premiers qui leur correspondent~:
\begin{equation*}
\begin{disarray}{c||c|c|cc|cccc|ccccccccc}
t&\matulab &\matulac &\matulae &\matulag&\matulaaa&\matulaag&\matulaac&\matulaai &\matulaca
&\matulaei&\matulada&\matulafg&\matulabi&\matulabc&\matuladc &\matulacg &\matulaec\\ 
&&&&&&&&&&&&&&&&&\\
\Cal A^{-1}(t)&2&3&5&7&11&17&13&19
&31&59&41&67&29&23&43&37&53\\
\end{disarray}
\end{equation*}

Il existe des applications en ligne qui fournissent la for\^et associ\'ee \`a un nombre entier, par exemple \cite{Ch} pour les nombres de 1 \`a un milliard.
En appliquant des coupes \'el\'ementaires sur les arbres, on peut donc associer \`a un nombre premier un produit de deux facteurs premiers, et ce de plusieurs mani\`eres possibles suivant le choix de la coupe. Inversement on peut fabriquer un nombre premier \`a partir de deux en greffant le premier arbre sur un sommet du deuxi\`eme, par exemple sa racine. C'est la d\'efinition du \textsl{produit de Butcher} $(s,t)\mapsto s\butcher t$ de deux arbres \cite{B72}. Par exemple~:
\begin{equation*}
\matulac\butcher\matulac=\matulaac,\hskip 15mm \matulag\butcher\matulac=\matuladc
\end{equation*}
L'ensemble des arbres enracin\'es muni du produit de Butcher pr\'esente une forme simplifi\'ee de structure pr\'e-Lie que l'on appelle NAP (pour "non-associative permutative") \cite{L06}. C'est tr\`es pr\'ecis\'ement le magma NAP libre \`a un g\'en\'erateur \cite{DL}, ce qui permet d'\'ecrire tout arbre comme produit de Butcher it\'er\'e de l'arbre $\racine$, et ce de mani\`ere essentiellement unique\footnote{De mani\`ere analogue, le $K$-espace vectoriel engendr\'e par les arbres (o\`u $K$ est un corps) est l'alg\`ebre pr\'e-Lie libre \`a un g\'en\'erateur \cite{ChaLiv, DL}. La structure pr\'e-Lie est donn\'ee par la somme sur toute les mani\`eres de greffer le premier arbre sur un sommet du deuxi\`eme, au lieu de se limiter \`a une greffe sur la racine. On remarque que la structure NAP est ensembliste alors que la structure pr\'e-Lie ne l'est pas, puisqu'elle requiert des combinaisons lin\'eaires formelles d'arbres.}.\\

On transportera sur $\Cal F$ l'ordre total des entiers naturels\footnote{Le bon ordre des entiers naturels fournit le plus petit ordinal infini d\'enombrable $\omega$. Il est tr\`es diff\'erent du bon ordre naturel sur l'ensemble des for\^ets qui l'identifie au plus petit ordinal $\varepsilon_0$ v\'erifiant $\varepsilon_0=\omega^{\varepsilon_0}$, autrement dit $\varepsilon_0=\omega^{\omega^{\omega^{\omega^{...}}}}$. Ce bon ordre sur les for\^ets provient de la forme normale de Cantor \cite{C, Cf} pour chaque ordinal $\lambda<\varepsilon_0$. Voir par exemple \cite{W, W20}.} via l'arborification $\Cal A$. Un premier r\'esultat concernant l'application du produit de Butcher \`a la th\'eorie des nombres est l'in\'egalit\'e suivante~:
\begin{prop}\label{butcher}
Pour tout couple d'arbres $(s,t)$ except\'e $(\matulab,\matulab)$ et $(\matulac,\matulab)$ on a~:
\begin{equation}
s\butcher t > st.
\end{equation}
\end{prop}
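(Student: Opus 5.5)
By construction of the arborification $\Cal A$, the statement becomes an inequality on primes. Let $a=\Cal A^{-1}(s)$, a prime, and let $t=B_+(F)$ with $m=\Cal A^{-1}(F)$, so that $\Cal A^{-1}(t)=p_m$. Grafting $s$ on the root of $t$ gives $s\butcher t=B_+(sF)$. Since $\Cal A$ is a monoid isomorphism, $\Cal A^{-1}(sF)=am$, hence $\Cal A^{-1}(s\butcher t)=p_{am}$. Proposition \ref{butcher} is therefore equivalent to
\begin{equation*}
p_{am}>a\,p_m \qquad \text{for every prime } a \text{ and every } m\geq 1,
\end{equation*}
except for $(a,m)=(2,1)$ and $(3,1)$. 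These are exactly the two listed exceptions: $p_2=3<4$ and $p_3=5<6$. I would also record the two other cases with $m=1$: for $a\ge 5$ one has $p_a>2a$ (already $p_5=11>10$), which holds for all $a\ge 5$ by a crude Chebyshev or Rosser bound.

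Next I reformulate with the counting function $\pi$. Since $a p_m$ is composite, $p_{am}>a p_m$ is equivalent to $\pi(a p_m)<am=a\,\pi(p_m)$. So it suffices to prove $\pi(ax)<a\,\pi(x)$ for $x=p_m$, with $a\ge 2$ prime and $(a,x)$ outside the small exceptional set.

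I split according to the size of $a$ relative to $x$, using explicit Rosser--Schoenfeld / Dusart estimates. These are $x/\log x<\pi(x)$ for $x\ge 17$, $\pi(y)<1.26\,y/\log y$, or the sharper $\pi(y)<\frac{y}{\log y}\bigl(1+\frac{1.2762}{\log y}\bigr)$.

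In the regime where $a$ is large compared with $\log x$, the extra factor $\log(ax)$ versus $\log x$ in the denominators wins. Concretely, $a\pi(x)/\pi(ax)\gtrsim \log(ax)/\log x$, and the explicit constants can be absorbed once $\log(ax)\ge c\log x$ for a suitable $c$. In terms of $p_n$, this is the estimate $p_{am}>am\log(am)>a\,m(\log m+\log\log m)>a p_m$, valid when $a\ge\log m$ and $m\ge 6$.

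The main obstacle is the regime where $a$ is small, say $a<\log m$. Here the leading terms cancel and one needs a genuine "sub-homogeneity" of $\pi$. For $a=2$ this is the classical result $\pi(2x)<2\pi(x)$ for $x\ge 11$ (Rosser--Schoenfeld). For general fixed $a$, I would first try the second-order terms of Dusart's two-sided bounds. These give $\pi(ax)\approx \frac{ax}{\log x+\log a}\bigl(1+\frac{1}{\log(ax)}+\cdots\bigr)$, and the gain $\frac{ax\log a}{\log^2 x}$ dominates the error terms $O(ax/\log^3 x)$ once $x$ exceeds an explicit bound depending only weakly on $a$.

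Alternatively, one can invoke Panaitopol's inequality $\pi(x+y)<\pi(x)+\pi(y)$, valid for $y$ comparable with $x$ (e.g.\ $x\ge y\ge 2$ under his ratio hypothesis). Applying it iteratively to $ax=x+(a-1)x$ gives the claim directly.

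Finally, the remaining finitely many pairs $(a,m)$ below the explicit thresholds are checked directly from a table of primes, and only $(2,1)$ and $(3,1)$ fail.
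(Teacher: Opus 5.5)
Your reduction to $p_{am}>a\,p_m$ for prime $a$, with exceptions $(2,1)$ and $(3,1)$, is the paper's own (the paper proves the Gutman--Ivi\'c inequality $p_{an}>ap_n$ for all integers $a\ge 2$ and then restricts to primes). The analytic part, however, follows a genuinely different route.

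The paper stays with $p_n$ and uses the sharp Massias--Robin--Dusart bracket. With $\log\log n/\log n\le 1/e$ this gives $p_n\le n(\log n+\log\log n-0.337)$ and $p_{an}\ge an(\log n+\log\log n+\log a-1)$ for $n\ge 13$. Hence $p_{an}>ap_n$ as soon as $\log a>0.663$, uniformly in $a\ge 2$ and with no case split. The infinitely many pairs with $n\le 12$ are handled by an admissibility argument: $p_{(a-1)n},\dots,p_{an}$ miss a residue class modulo every prime, so $p_{jn}-p_{(j-1)n}\ge c(n)\ge p_n$, and this telescopes over $j$.

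You pass to $\pi$ and split according to $a$ versus $\log m$. This can be completed, but the split is unnecessary. Put $L=\log x$, $A=\log a$, $M=L+A$. Dusart's bounds $\pi(x)\ge\frac{x}{L}(1+\frac1L)$ for $x\ge 599$ and $\pi(ax)\le\frac{ax}{M}(1+\frac{1.2762}{M})$ reduce $\pi(ax)<a\pi(x)$ to $LMA+M^2-1.2762L^2>0$. This holds for every $a\ge 2$, since $LMA\ge L^2\log 2$, $M^2\ge L^2$, and $\log 2>0.2762$.

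So your ``main obstacle'' disappears exactly as in the paper. Note that the decisive comparison is between second-order constants, not a gain against an $O(ax/\log^3x)$ error as you wrote. The Panaitopol alternative can therefore be dropped; you leave its range of validity vague, and iterating $ax=x+(a-1)x$ would require it to cover ratios up to $a-1$.

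For $x=p_m<599$ (so $m\le 108$ and $\log m<5$), your Rosser estimate handles every prime $a\ge 5$ when $m\ge 6$. The case $a=2$ is covered by $\pi(2x)<2\pi(x)$ for $x\ge 11$, leaving $a=3$ to a table. This is more elementary than the admissible-tuple lemma, at the price of a longer finite check.

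One loose end remains. For $2\le m\le 5$ and unbounded $a$, neither of your regimes applies as stated, because $p_m<m(\log m+\log\log m)$ needs $m\ge 6$. So the leftover set is not literally finite. Close it with $p_{am}>am\log(am)\ge a\,p_m$. This holds because $\log(am)\ge p_m/m$ for all $a\ge 2$ when $m\in\{3,4,5\}$, and for $a\ge 3$ when $m=2$; the remaining case $p_4=7>6$ is immediate.
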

\noindent Cette proposition d\'ecoule d'un r\'esultat plus g\'en\'eral, \`a savoir~:
\begin{prop}[Gutman-Ivi\' c \cite{GI}]\label{pan-apn}
$p_{an}>ap_n$ pour tout $a,n\in\N^*,a\ge 2$ except\'e pour $n=1$ et $a=2,3,4$.
\end{prop}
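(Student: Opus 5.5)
Pour \'etablir $p_{an}>ap_n$, je compte utiliser des encadrements explicites de type Rosser--Schoenfeld (ou Dusart) du $k$-i\`eme nombre premier. Pour $k\ge 6$, on a
\[
k(\log k+\log\log k-1)<p_k<k(\log k+\log\log k).
\]
Pour $k\ge 2$, on dispose aussi de la minoration plus simple $p_k>k(\log k+\log\log k-1)$. L'id\'ee est de minorer $p_{an}$ par la borne inf\'erieure appliqu\'ee \`a $k=an$, et de majorer $ap_n$ par $a$ fois la borne sup\'erieure appliqu\'ee \`a $k=n$. Il reste alors \`a montrer
\[
an\big(\log a+\log n+\log\log(an)-1\big)\ \ge\ an\big(\log n+\log\log n\big),
\]
c'est-\`a-dire $\log a-1+\log\log(an)-\log\log n\ge 0$.

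Les deux termes en $\log\log$ donnent une diff\'erence positive, puisque $\log\log(an)>\log\log n$. L'in\'egalit\'e est donc acquise d\`es que $\log a\ge 1$, soit $a\ge 3$, et ce pour tout $n\ge 6$. Pour $a=2$, il faut en plus $\log\log(2n)-\log\log n\ge 1-\log 2\approx 0{,}307$. Or $\log\log(2n)-\log\log n=\log\!\big(1+\tfrac{\log 2}{\log n}\big)$, qui ne d\'epasse $0{,}307$ que pour $n$ petit ($\log n\lesssim 1{,}9$). La borne brute ne suffit donc pas pour $a=2$ et $n$ grand. Je compte y rem\'edier en utilisant les encadrements plus fins de Dusart, $p_k>k(\log k+\log\log k-1+\tfrac{\log\log k-2{,}1}{\log k})$ et $p_k<k(\log k+\log\log k-1+\tfrac{\log\log k-2}{\log k})$ (ce dernier pour $k\ge 688383$). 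Avec eux, l'\'ecart principal devient $a n\log a-an\cdot O(1/\log n)$, qui est positif pour $n$ grand m\^eme lorsque $a=2$, puisque $\log 2$ domine alors le terme correctif. Alternativement, pour $a=2$, on peut invoquer directement un r\'esultat connu du type $p_{2n}>2p_n$ pour $n\ge 5$ (Ishikawa), puis traiter le cas $a\ge3$ par l'argument ci-dessus.

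On se ram\`ene ainsi \`a un nombre fini de cas : $n$ inf\'erieur au seuil de validit\'e des bornes, ou $a$ petit combin\'e \`a $n$ petit. Lorsque $n$ est petit mais $a$ grand, il faut garder la borne inf\'erieure sur $p_{an}$ et une valeur exacte de $p_n$. On doit alors v\'erifier $an(\log(an)+\log\log(an)-1)>ap_n$, soit $\log(an)+\log\log(an)-1>p_n/n$, ce qui est vrai d\`es que $a$ d\'epasse une borne explicite d\'ependant de $n$. Les couples restants se v\'erifient \`a la main ou sur une table de nombres premiers. C'est ce qui fait appara\^\i tre exactement les exceptions $n=1$, $a=2,3,4$ : $p_2=3<4$, $p_3=5<6$, $p_4=7<8$, tandis que $p_5=11>10$.

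L'obstacle principal est le cas $a=2$, et plus g\'en\'eralement les petites valeurs de $a$, o\`u le gain $\log a-1$ est n\'egatif ou nul. Il faut y contr\^oler les termes de second ordre des estimations de $p_k$, puis r\'ealiser une v\'erification num\'erique jusqu'au seuil o\`u les bornes fines s'appliquent. C'est ce seuil, potentiellement grand, qui demandera le plus de soin.
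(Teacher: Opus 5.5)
Your outline is correct, but it follows a different route from the paper, and for $a=2$ it is heavier. The paper handles all $a\ge 2$ at once for $n\ge 13$ with the Massias--Robin upper bound $p_n\le n\bigl(\log n+\log\log n-1+1.8\frac{\log\log n}{\log n}\bigr)$. Since $\log\log x/\log x\le e^{-1}$, this gives $p_n\le n(\log n+\log\log n-0.337)$. Compared with Dusart's lower bound for $p_{an}$, the condition becomes $\log a\ge 0.663$, which already includes $a=2$ because $\log 2\approx 0.693$.

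Your Rosser--Schoenfeld bound $p_n<n(\log n+\log\log n)$ has no negative constant term, so you only reach $\log a\ge 1$. Your repair through Dusart's bound (valid for $k\ge 688383$) is sound, but it leaves $p_{2n}>2p_n$ to be checked by machine for $7\le n<688383$. Your alternative imports exactly the $a=2$ case of the statement from the literature; it does follow, for instance, from $\pi(2x)<2\pi(x)$ for $x\ge 11$ applied at $x=p_n$. Either way the hardest case is outsourced rather than argued, and the Massias--Robin bound is the idea that removes that need.

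For small $n$ the comparison reverses. The paper treats $n\le 12$ with an admissible-tuple lemma and the table of minimal diameters $c(n)$ (OEIS A008407). Your direct check, Dusart's lower bound for $p_{an}$ against the exact value of $p_n$ followed by a few pairs by hand, is more elementary and more robust. In fact the paper's lemma also fails at $(a,n)=(2,2)$: $\{p_2,p_3,p_4\}=\{3,5,7\}$ meets every residue class mod $3$, so $p_4-p_2=4<c(2)=6$, even though $p_4=7>6$ still holds.

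Your method extends without difficulty to all $n\le 12$. Only $(a,n)=(2,2),(3,2),(2,3),(2,5)$ and $n=1$ with $2\le a\le 9$ are left to inspect directly. Combining it with the paper's estimate for $n\ge 13$ gives a complete proof that can be checked entirely by hand.
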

La proposition \ref{butcher} en d\'ecoule imm\'ediatement en se limitant aux $a$ premiers et en appliquant l'arborification. La proposition \ref{pan-apn} est obtenue comme un corollaire des encadrements fins de Massias, Robin et Dusart \cite{MR96, D99} (am\'eliorant les r\'esultats ant\'erieurs de Rosser et Schoenfeld \cite{RS62}) concernant la fonction $n\mapsto p_n$. Nous en redonnons une preuve pour la commodit\'e du lecteur.\\

L'ensemble des arbres enracin\'es est par ailleurs muni d'un produit associatif et commutatif, le \textsl{produit de fusion}, obtenu en fusionnant les racines des deux arbres en une seule. Par exemple~:
\begin{equation}
 \matulae\vee\matulag=\matulacg.
\end{equation}
L'encadrement de Massias, Robin et Dusart permet \'egalement  d'obtenir une s\'erie d'in\'egalit\'es relatives au produit de fusion~:
\begin{prop}\label{fusion}
On a l'in\'egalit\'e $s\vee t< st$ sauf pour $\{s,t\}=\{\matulae,\matulag\}$ ou $\{s,t\}=\{\matulag,\matulag\}$. Autrement dit $p_{mn}< p_mp_n$ sauf si $\{m,n\}=\{3,4\}$ ou  $\{m,n\}=\{4,4\}$.
\end{prop}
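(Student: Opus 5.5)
The first step is to translate the statement into a statement about primes. If $s=\Cal A(p_m)=B_+(\Cal A(m))$ and $t=\Cal A(p_n)=B_+(\Cal A(n))$, fusing the roots gives
\[
s\vee t=B_+\big(\Cal A(m)\Cal A(n)\big)=B_+\big(\Cal A(mn)\big)=\Cal A(p_{mn}),
\]
because $\Cal A$ is a monoid morphism. Since the order on $\Cal F$ is transported from $\N^*$ via $\Cal A$, the claim $s\vee t<st$ is exactly $p_{mn}<p_mp_n$ for all $m,n\ge 1$. The case $m=1$ (or $n=1$) is trivial: it reads $p_n<2p_n$. So I will assume $m,n\ge 2$, and by symmetry $m\le n$.

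For the general case I will use the explicit bounds of Massias, Robin and Dusart already invoked for Proposition \ref{pan-apn}:
\[
p_k\ge k\ln k\ \ (k\ge 1),\qquad k(\ln k+\ln\ln k-1)<p_k<k(\ln k+\ln\ln k)\ \ (k\ge 6).
\]
Set $a=\ln m$ and $b=\ln n$. The upper bound at $mn\ge 6$ together with the lower bound $p_k\ge k\ln k$ reduces the claim to the sufficient inequality
\[
ab>a+b+\ln(a+b).
\]
For $a,b\ge 3$, i.e.\ $m,n\ge 21$, this holds: at $a=b=3$ it reads $9>6+\ln 6\approx 7.8$. The difference $ab-a-b-\ln(a+b)$ has partial derivatives $b-1-1/(a+b)>0$ and $a-1-1/(a+b)>0$, so it is increasing in each variable. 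This settles the range $m,n\ge 21$.

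The remaining case is $2\le m\le 20$ with $n$ arbitrary. For fixed $m$ I compare the upper bound for $p_{mn}$ with $p_m$ times the sharper lower bound for $p_n$ (valid for $n\ge 6$). The target becomes
\[
p_m\,(\ln n+\ln\ln n-1)>m\,\big(\ln m+\ln n+\ln\ln(mn)\big).
\]
Put $c_m=p_m/m$. We have $c_m>1$ for every $m\ge 2$; for instance $c_2=3/2$, $c_3=5/3$ and $c_4=7/4$. Hence the left side exceeds the right side as soon as
\[
(c_m-1)\ln n \;\ge\; \ln m+1+(c_m+1)\ln\ln(mn)
\]
up to lower-order terms. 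This gives an explicit threshold $N(m)$ beyond which the inequality holds.

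I expect the main obstacle to be the case $m=2$, where $c_2-1=1/2$ is smallest. There the threshold $N(2)$ may be in the thousands, so the remaining $n<N(2)$ must be checked directly against a table of primes, or by computer. The same is true for the finitely many pairs $2\le m\le 20$, $n<N(m)$. To keep this finite check small, I would first try to use the sharper Dusart bound $p_k\le k(\ln k+\ln\ln k-0.9484)$ for $k\ge 39017$. The direct check then confirms $p_{mn}<p_mp_n$ everywhere except $\{m,n\}=\{3,4\}$, where $p_{12}=37>35=5\cdot 7$, and $\{m,n\}=\{4,4\}$, where $p_{16}=53>49=7\cdot 7$.
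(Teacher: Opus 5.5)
Your proposal is correct and follows essentially the same route as the paper: you translate through $\Cal A$ to $p_{mn}<p_mp_n$, treat two large indices with explicit bounds on $p_k$, treat a fixed small index by comparing $p_m$ times a lower bound for $p_n$ with an upper bound for $p_{mn}$ (this is the paper's $f_k(l)$), and finish with a finite table; the paper's sharper bound $p_k\le k(\log k+\log\log k-0.337)$ for $k\ge 13$ only lowers your cutoffs, from $21$ and (for $m=2$) $33$ to $14$ and $22$.

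One correction: in your simplified criterion the term $c_m\ln\ln n$ belongs on the favourable side, but you turned it into $+(c_m+1)\ln\ln(mn)$ on the right, so the criterion, while still sufficient, is so lossy that for $m=2$ it only applies near $n\approx 7\cdot 10^{7}$. Your target inequality itself is increasing in $n$ (its derivative is $\frac1n\bigl[(c_m-1)+\frac{c_m}{\ln n}-\frac{1}{\ln(mn)}\bigr]>0$) and for $m=2$ already holds from $n=33$ on. Hence the residual check is about the size of the paper's Appendix A, and the Dusart bound for $k\ge 39017$ is useless here.
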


La fonction de M\"obius $\mu$ est donn\'ee par $\mu(k)=0$ si $k$ contient au moins un facteur carr\'e, et $\mu(k)=(-1)^{\|k\|}$, o\`u $\|k\|$ est le nombre de facteurs premiers de $k$, si ceux-ci sont tous distincts. Elle est multiplicative. La fonction de Liouville, compl\`etement multiplicative, est d\'efinie par $\lambda(k)=(-1)^{\|k\|}$, o\`u $\|k\|$ est le nombre de facteurs premiers de $k$. Les \'egalit\'es
\[\frac{1}{\zeta(s)}=\sum_{k\ge 1} \frac{\mu(k)}{k^s},\hskip 12mm \frac{\zeta(2s)}{\zeta(s)}=\sum_{k\ge 1} \frac{\lambda(k)}{k^s}\]
illustrent l'importance de ces fonctions pour l'\'etude de la fonction Zeta de Riemann. En application des propositions \ref{butcher} ef \ref{fusion}, nous esquissons une m\'ethode de majoration en valeur absolue des fonctions sommatoires des fonctions de M\"obius et de Liouville~:
\begin{equation}
M(n):=\sum_{k=1}^n\mu(k), \hskip 12mm L(n):= \sum_{k=1}^n\lambda(k)
\end{equation}
en rangeant les nombres entre $1$ et $n$ par paires $\{a,b\}$ avec $\mu(a)+\mu(b)=0$ ou $\lambda(a)+\lambda(b)=0$. Pr\'ecisons \`a ce stade que nous n'obtenons aucun r\'esultat qui soit de nature \`a am\'eliorer notre connaissance de la fonction $\zeta$. Mais, apr\`es \'elimination des paires $\{a,b\}$ ainsi form\'ees, les faibles valeurs de $|M(n)|$ et de $|L(n)|$ sautent aux yeux pour toute valeur de $n$ inf\'erieure \`a quelques centaines. Nous esp\'erons avoir ainsi montr\'e que l'arborification de Matula pourrait \^etre autre chose qu'une simple curiosité.
\section{Quelques in\'egalit\'es concernant les nombres premiers}
\label{sect:pan-apn}
Le but de cette premi\`ere section est de d\'emontrer les propositions \ref{pan-apn} et \ref{fusion}. Nous partirons de l'encadrement suivant (Massias et Robin \cite[Th\'eor\`eme A]{MR96}, am\'elior\'e par P. Dusart \cite{D99})~:
\begin{equation}\label{massias-robin}
n(\log n+\log\log n -1)\le p_n\le n\left(\log n+\log\log n-1+1.8\frac{\log\log n}{\log n}\right)
\end{equation}
pour tout $n\ge 13$. Notons que la borne inf\'erieure dans \eqref{massias-robin}, due \`a P. Dusart, est valide pour tout $n\ge 2$. Notons aussi qu'une version un peu moins fine de cet encadrement est utilis\'ee dans \cite{DO18} pour montrer une propri\'et\'e d'extr\'emalit\'e des nombres de Matula correspondant \`a une famille sp\'ecifique d'arbres et de for\^ets, appel\'es chenilles binaires (binary caterpillars) par l'auteur.
\subsection{D\'emonstration de la Proposition \ref{pan-apn}}
La fonction $x\mapsto \log\log x/\log x$ atteint son maximum en $x=e^e\simeq 15.1542...$ Ce maximum vaut $e^{-1}=0,3678...$. On a donc d'apr\`es \eqref{massias-robin} l'encadrement pour tout $n\ge 13$~:
\begin{equation}\label{massias-robin2}
n(\log n+\log\log n -1)\le p_n\le n(\log n+\log\log n-0.337).
\end{equation}
On a donc pour tout $a\ge 2$ et $n\ge 13$~:
\begin{equation*}
ap_n\le an\log n+an\log\log n-0.337an,
\end{equation*}
alors que~:
\begin{eqnarray*}
p_{an}&\ge& an\log(an)+an\log\log (an)-an\\
&\ge & an\log n + an\log\log n+an(\log a -1).
\end{eqnarray*}
On a donc $p_{an}\ge ap_n$ d\`es que $\log a\ge 0.663$, soit $a\ge 1.9406...$. Tout $a\ge 2$ convient donc. Reste \`a examiner le cas o\`u $n$ est compris entre $1$ et $12$. On fixe donc un tel $n$. Il y a $n+1$ nombres premiers entre $p_{(a-1)n}$ et $p_{an}$, bornes comprises.
\begin{lem}\label{tuple}
Supposons que le couple $(a,n)$ soit diff\'erent de $(2,1)$. Alors pour tout nombre premier $q$, il existe au moins un r\'esidu modulo $q$ absent du $n+1-$uplet $\{p_{(a-1)n},p_{(a-1)n+1},\ldots,p_{an}\}$.
\end{lem}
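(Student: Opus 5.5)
Le plan est un simple argument de comptage, en séparant les petits et les grands nombres premiers $q$. Le $n+1$-uplet considéré est formé de nombres premiers consécutifs $p_{(a-1)n}<\cdots<p_{an}$, donc il contient au plus $n+1$ résidus distincts modulo $q$. Si $q\ge n+2$, il y a $q>n+1$ résidus possibles, et au moins l'un d'eux est absent. Il suffit donc de traiter les nombres premiers $q\le n+1$, c'est-à-dire, puisque $n\le 12$ dans l'usage prévu (mais l'argument est général), les $q$ premiers tels que $q\le n+1$.

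Pour un tel $q$, j'observerai que le résidu $0$ modulo $q$ ne peut apparaître que si l'un des termes du $n+1$-uplet est égal à $q$ lui-même, tous les termes étant premiers. Or le plus petit terme est $p_{(a-1)n}$, avec $(a-1)n\ge n$ puisque $a\ge 2$. On a donc $p_{(a-1)n}\ge p_n$. Comme $p_k\ge k+1$, avec égalité seulement pour $k=1$, on obtient $p_n\ge n+1\ge q$.

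L'étape délicate est le cas d'égalité, où $q$ pourrait figurer dans le uplet. On a $q\le n+1\le p_n\le p_{(a-1)n}$, et l'égalité $q=p_{(a-1)n}$ force $p_{(a-1)n}=n+1$. Cela impose $(a-1)n=1$, donc $a=2$ et $n=1$, ce qui est exactement le couple exclu par l'hypothèse. Dans tous les autres cas, on a $p_{(a-1)n}>q$ : tous les termes du uplet sont des nombres premiers strictement supérieurs à $q$, donc non divisibles par $q$, et le résidu $0$ est absent.

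Pour conclure, je rédigerai ces deux cas, $q>n+1$ par cardinalité et $q\le n+1$ par absence du résidu nul. Je vérifierai aussi l'inégalité $p_k>k+1$ pour $k\ge 2$, qui est immédiate ($p_2=3$, et $p_{k+1}\ge p_k+1$). Je noterai enfin que le couple exclu correspond bien au uplet $\{2,3\}$, qui contient tous les résidus modulo $2$.
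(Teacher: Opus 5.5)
Your strategy matches the paper's: cardinality handles $q\ge n+2$, and for $q\le n+1$ the residue $0$ can only appear if $q$ itself is one of $p_{(a-1)n},\dots,p_{an}$. The error is in the equality case.

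The claim ``$p_k\ge k+1$ with equality only for $k=1$'' is false, since $p_2=3=2+1$. Your own check ``$p_2=3$'' already contradicts the strict inequality $p_k>k+1$ at $k=2$; that inequality only holds from $k=3$ on. From $q\le n+1\le p_n\le p_{(a-1)n}$, the equality $q=p_{(a-1)n}$ forces $a=2$ and $p_n=n+1$, that is $n\in\{1,2\}$, not only $n=1$.

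The missed case is a genuine counterexample. For $(a,n)=(2,2)$ and $q=3$ the triple is $\{p_2,p_3,p_4\}=\{3,5,7\}$, whose residues modulo $3$ are $0,2,1$, i.e.\ all of them. So the lemma as stated is false and no proof can succeed. The correct hypothesis is $(a,n)\notin\{(2,1),(2,2)\}$. Under that hypothesis your argument works verbatim once ``equality only for $k=1$'' is replaced by ``equality exactly for $k\in\{1,2\}$''.

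The paper's proof has the same hole. It correctly reduces to $a=2$, $n\in\{1,2\}$, but then dismisses $n=2$ by examining $\{p_2,p_3\}=\{3,5\}$ modulo $2$, which is the pair for $(a,n)=(3,1)$, instead of $\{3,5,7\}$ modulo $3$.

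The downstream damage is limited. For $(2,2)$ one has $p_4-p_2=4<c(2)=6$, so the gap bound drawn from the lemma fails there. However, the target inequality $p_4=7>6=2p_2$ can simply be checked by hand.
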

\begin{proof}
Supposons qu'il existe un nombre premier $q$ qui contredise cette assertion. On a forc\'ement $n+1\ge q$. Le r\'esidu $0$ \'etant pr\'esent dans la suite $\{p_{(a-1)n},p_{(a-1)n+1},\ldots,p_{an}\}$, il existe $j\in\{0,\ldots,n\}$ tel que $q=p_{(a-1)n+j}$. On a donc $n+1\ge p_{(a-1)n+j}$, ce qui est impossible sauf si $a=2$ et $n\in\{1,2\}$. Tous les r\'esidus modulo $2$ apparaissent dans $\{p_1,p_2\}=\{2,3\}$, mais $0$ modulo $2$ n'appara\^\i t pas dans $\{p_2,p_3\}=\{3,5\}$, ce qui r\'eduit l'ensemble des couples \`a rejeter au singleton $\{(2,1)\}$.
\end{proof}
La diff\'erence $p_{an}-p_{(a-1)n}$ est donc sup\'erieure \`a la longueur minimale $c(n)$ d'un $n+1-$uplet de nombres premiers cons\'ecutifs "assez grands" (\cite[A008407]{sloane}). On a $c(n)\ge p_n$ pour $1\le n\le 12$, comme on le voit sur le tableau suivant~:
\begin{equation*}
\begin{disarray}{c|cccccccccccc}
n&1&2&3&4&5&6&7&8
&9&10&11&12\\
\hline\\
p_n&2&3&5&7&11&13&17&19&23&29&31&37\\
c(n)&2&6&8&12&16&20&26&30&32&36&42&48
\end{disarray}
\end{equation*}
On a donc $p_{an}>ap_n$ d\`es que que $a$ est diff\'erent de $2$. Enfin $p_2<2p_1$, $p_3<3p_1$ et $p_4<4p_1$, mais on a clairement $p_5>5p_1$ et par suite $p_a>ap_1$ pour tout $a\ge 5$. Ceci termine la d\'emonstration de la proposition \ref{pan-apn}. La proposition \ref{butcher} en est un corollaire imm\'ediat.
\subsection{D\'emonstration de la proposition \ref{fusion}}
On utilise \`a nouveau l'encadrement \eqref{massias-robin2}~: dans le cas $k\ge 14$ et $l\ge 14$ il suffit donc de montrer l'in\'egalit\'e~:
\begin{equation}\label{ineq1}
(\log k+\log\log k-1)(\log l+\log\log l-1)-(\log k+\log l+\log\log (kl)-0.337)\ge 0.
\end{equation}
Posant $K=\log k$ et $L=\log l$, l'in\'egalit\'e s'\'ecrit $f(K)f(L)-f(K+L)+0.663\ge0$, o\`u $f(x)=x+\log x-1$ est croissante sur $[\log 14,+\infty[$. La fonction $(K,L)\mapsto f(K)f(L)-f(K+L)+0.663$ atteint donc son minimum en $K=L=\log 14$, et on a~:
 \begin{equation}\label{ineq2}
 (\log 14+\log\log 14-1)^2-(2\log 14+\log(2\log 14)-0.337)
 =0.205...\ge 0,
 \end{equation}
et donc $p_kp_l\ge p_{kl}$ pour $k,l\ge 14$. Notons que l'in\'egalit\'e \eqref{ineq2} devient fausse si on remplace 14 par 13.
Dans le cas $k\le 13$ et $l\ge 13$ on a~
\begin{eqnarray*}
p_{kl}&\le& kl\big(\log k+\log l +\log(\log k+\log l)-0.337\big) \hbox{ et}\\
p_kp_l&\ge&p_kl(\log l+\log\log l-1),
\end{eqnarray*}
d'o\`u finalement~:
\begin{eqnarray*}
p_kp_l-p_{kl}&\ge& l\big((p_k-k)\log l+p_k\log\log l - k\log\log (kl)-k\log k-p_k+0.337k\big)\\
&\ge &l\Big((p_k-k)\log l+p_k\log\log l - k\log\big((\log l)(1+\frac {\log k}{\log l})\big)-k\log k-p_k+0.337k\Big)\\
&\ge &l\Big((p_k-k)\log l+p_k\log\log l - k\log\log l-k\frac {\log k}{\log l}-k\log k-p_k+0.337k\Big)\\
	&\ge &l\Big((p_k-k)(\log l+\log\log l-1)-k(\frac{\log k}{\log l}+\log k+0.663)\Big).
\end{eqnarray*}
Le cas $k=1$ \'etant trivial, on regarde sur $[13,+\infty[$ la fonction~:
\begin{equation*}
l\mapsto f_k(l):=(p_k-k)(\log l+\log\log l-1)-k(\frac{\log k}{\log l}+\log k+0.663)
\end{equation*}
pour $2\ge k\ge 13$. La fonction $f_k$ est croissante. Le calcul num\'erique explicite donne $f_2(21)<0<f_2(22)$, $f_3(21)<0<f_3(22)$ et $f_4(23)<0<f_4(24)$. On a ensuite $f_5(13)>0$, $f_6(14)<0<f_6(15)$, $f_8(13)<0<f_8(14)$, et finalement $f_k(13)>0$ pour $k=7,9,10,11,12$ et $13$. On a repr\'esent\'e en appendice les valeurs de $p_kp_l$ et de $p_{kl}$ (sous la forme du rapport $\frac{p_kp_l}{p_{kl}}$) pour les cas qui restent \`a examiner, \`a savoir les paires $\{k,l\}$ avec $k$ et $l$ entre $1$ et $13$, et les paires avec $l\ge 14$ et $f_k(l)<0$. On s'est limit\'e par sym\'etrie au cas $k\le l$~: toutes les paires repr\'esent\'ees donnent un rapport inf\'erieur \`a $1$ sauf les deux paires exceptionnelles $\{3,4\}$ et $\{4,4\}$, ce qui termine la preuve de la proposition \ref{fusion}.
\subsection{D\'emonstration d'une conjecture de J. Sousselier}
Pour tout entier $n\ge 2$, Jean Sousselier conjecture l'in\'egalit\'e suivante ("conjecture pr\'ealable", \cite{S99})~:
\begin{equation}\label{js}
\frac{p_n}{n}\le\frac{P_{p_n}}{p_n}.
\end{equation}
L'in\'egalit\'e \eqref{js} est imm\'ediatement v\'erifi\'ee pour $n=2,3,4,5$. Pour $n\ge 6$ il suffit d'utiliser l'encadrement de Rosser et Schoenfeld \cite{RS62}~:
\begin{equation}
\log n\le\frac{p_n}{n}\le\log n+\log\log n.
\end{equation}
On en d\'eduit imm\'ediatement~:
\begin{equation*}
\log n\le\frac{p_n}{n}\le\log n+\log\log n=\log(n\log n)\le\log p_n\le\frac{p_{p_n}}{p_n},
\end{equation*}
ce qui d\'emontre \eqref{js}.
\subsection{Encore une in\'egalit\'e sur les nombres premiers}
\begin{prop}\label{trois-n}
Pour tout entier $n\ge 12$ on a~:
\begin{equation}
p_n>3n.
\end{equation}
\end{prop}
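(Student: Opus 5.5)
On utilisera la borne inférieure de Dusart dans \eqref{massias-robin}, valable pour tout $n\ge 2$~:
\[
p_n\ge n(\log n+\log\log n-1).
\]
Il suffit donc d'établir $\log n+\log\log n-1>3$, soit $g(n):=\log n+\log\log n>4$, pour $n$ assez grand. La fonction $g$ est strictement croissante sur $]1,+\infty[$, et on cherche le plus petit entier $n_0$ tel que $g(n_0)>4$.

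Le calcul numérique donne $g(20)=2.9957+1.0972\simeq 4.093>4$. De même $g(19)=2.944+1.080\simeq 4.024>4$ et $g(18)=2.890+1.061\simeq 3.951<4$. On obtient ainsi $p_n>3n$ pour tout $n\ge 19$. La marge est faible près de $n=19$, et on vérifiera soigneusement ces décimales.

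Il reste les cas $12\le n\le 18$, que l'on traite par vérification directe sur la liste des nombres premiers~:
\[
p_{12}=37>36,\quad p_{13}=41>39,\quad p_{14}=43>42,\quad p_{15}=47>45,
\]
\[
p_{16}=53>48,\quad p_{17}=59>51,\quad p_{18}=61>54.
\]
On notera au passage que le seuil $12$ est optimal, puisque $p_{11}=31<33$.

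Le seul point délicat est le contrôle numérique de $g$ au voisinage du seuil, car $g(19)-4$ ne vaut qu'environ $0.02$. Si l'on préfère éviter cette précision, on peut prendre $n\ge 20$ comme seuil analytique, où la marge est plus confortable, et ajouter simplement $p_{19}=67>57$ à la liste finie.
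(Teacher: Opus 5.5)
Your proof is correct and follows the paper's scheme: an explicit lower bound for $p_n$ handles all large $n$, and a direct check of the first primes handles the rest. The only difference is the bound used. The paper takes the cruder $p_n\ge n\log n$, which gives $p_n>3n$ as soon as $n>e^3\approx 20.08$ and leaves $12\le n\le 20$ to be checked by hand. Your use of Dusart's bound lowers the analytic threshold to $n\ge 19$, at the cost of relying on a deeper result and accepting a much thinner numerical margin ($g(19)\approx 4.024$).
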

\begin{proof}
L'in\'egalit\'e $p_n\ge n\log n$ fournit la d\'emonstration pour $n\ge 21$ (en utilisant $e^3=20.08...$). L'examen des vingt premiers nombres premiers montre que l'in\'egalit\'e est en fait v\'erifi\'ee d\`es $n=12$.
\end{proof}
En termes d'arbres, on peut en d\'eduire qu'un arbre $t$ \`a une seule branche est toujours sup\'erieur \`a la for\^et $\arbrea\, t'$ o\`u $t'$ est d\'efini par $t=t'\butcher \matulab$, sauf si $t=\matulac,\matulae,\matulaaa,\matulaag$ ou $\matulaca$. Par exemple $\matulaei >\matulac\matulaag$, soit $59>51$.
\section{Entiers naturels et arbres}
\subsection{La structure non-associative permutative sur les arbres}
Un \textsl{magma non-associatif permutatif} (ou "magma NAP") \cite{DL, L06} est un ensemble $E$ muni d'une loi interne binaire $\RHD$ telle que~:
\begin{equation}\label{nap}
x\RHD(y\RHD z)=y\RHD(x\RHD z).
\end{equation}
Le terme "commutatif \`a gauche" est parfois utilis\'e \`a la place de NAP\footnote{plus exactement "commutatif \`a droite", car les auteurs ci-dessus utilisent la variante \`a droite not\'ee $\LHD$, dans laquelle la relation \eqref{nap} est remplac\'ee par $(x\LHD y)\LHD z=(x\LHD z)\LHD y$.}. En-dehors des exemples \'evidents fournis par les mono\"\i des ab\'eliens, le prototype est fourni par l'ensemble $\Cal T$ des arbres enracin\'es munis du produit de Butcher. Plus pr\'ecis\'ement \cite{DL}, $(\Cal T,\,\butcher)$ est le mono\"\i de NAP libre \`a un g\'en\'erateur. La relation \eqref{nap} s'exprime par le fait que l'ordre de greffage des branches sur la racine n'a pas d'importance.
\subsection{Retour sur l'arborification de Matula-Goebel-Cappello}
L'utilisation de l'arborification $\Cal A:\N^*\to\Cal F$ d\'efinie dans l'introduction permet de transporter le produit de Butcher sur les entiers positifs non nuls, munissant ainsi l'ensemble $\Cal P$ des nombres premiers d'une structure de mono\"\i de NAP. Le produit de Butcher de deux nombres premiers $q=p_m$ et $r=p_n$ est donn\'e par~:
\begin{equation}
p_m\butcher p_n=p_{{p_m}n}.
\end{equation}
La libert\'e et la mono-g\'en\'eration du mono\"\i de NAP $\Cal P$ s'expriment par le fait que tout nombre premier admet une expression comme produit de Butcher it\'er\'e et convenablement parenth\'es\'e de $p_1=2$, cette expression \'etant unique modulo les relations \eqref{nap}. Le produit de Butcher est compatible avec l'ordre total, \`a savoir que si $q\le q'$ et $r\le r'$, alors $q\butcher r\le q'\butcher r'$. En revanche, comme P. Cappello l'avait remarqu\'e \cite{C88}, le nombre de sommets n'est pas une fonction croissante de l'arbre~: en effet $\Cal A(53)=\matulaec$ et $\Cal A(59)=\matulaei$ ont cinq sommets, alors que $\Cal A(47)=\matuladg$ en a six. L'in\'egalit\'e $p_n> 3n$ (Proposition \ref{trois-n}) fournit une exemple de ce ph\'enom\`ene pour tout $n$ premier au moins \'egal \`a $12$~: en effet, $\mathcal A(3n)$ poss\`ede un sommet de plus que $\mathcal A(p_n)$.
\subsection{Sommets, ar\^etes et feuilles}
La \textsl{fonction de Gutman-Ivi\'c-Matula} $n\mapsto v(n)$, qui associe \`a un entier $n$ le nombre total de sommets de sa for\^et $\Cal A(n)$, est compl\`etement additive. Elle a \'et\'e \'etudi\'ee en d\'etail par R. de la Bret\`eche et G. Tenenbaum \cite{BT}. On notera $a(n)$ le nombre total d'ar\^etes de la for\^et $\Cal A(n)$, et $f(n)$ le nombre total de feuilles de la for\^et $\Cal A(n)$. Les fonctions $a$ et $f$ sont \'egalement totalement additives, et le nombre de facteurs de $n$ est donn\'e par~:
\begin{equation}
\|n\|=v(n)-a(n).
\end{equation}
$1$ est le seul nombre sans feuilles. Les nombres \`a une seule feuille constituent la suite des nombres hyper-premiers de Wilson ("Wilson's primeth sequence", \cite[A007097]{sloane})~:
\vskip 2mm
$$\genfrac{}{}{0pt}{1}{\matulab}{2}, \hskip 3mm\genfrac{}{}{0pt}{1}{\matulac}{3}, \hskip 3mm\genfrac{}{}{0pt}{1}{\matulae}{5}, \hskip 3mm\genfrac{}{}{0pt}{1}{\matulaaa}{11}, \hskip 3mm\genfrac{}{}{0pt}{1}{\matulaca}{31}, \hskip 3mm\genfrac{}{}{0pt}{1}{\matulaabg}{127}, \hskip 3mm\genfrac{}{}{0pt}{1}{\matulagoi}{709},\ldots$$
Les premiers nombres \`a deux feuilles sont~:
$$\hskip 3mm\genfrac{}{}{0pt}{1}{\matulab\matulab}{4}, \hskip 3mm\genfrac{}{}{0pt}{1}{\matulac\matulab}{6}, \hskip 3mm\genfrac{}{}{0pt}{1}{\matulag}{7}, \hskip 3mm\genfrac{}{}{0pt}{1}{\matulac\matulac}{9}, \hskip 3mm\genfrac{}{}{0pt}{1}{\matulae\matulab}{10}, \hskip 3mm\genfrac{}{}{0pt}{1}{\matulaac}{13}, \hskip 3mm\genfrac{}{}{0pt}{1}{\matulae\matulac}{15}, \hskip 3mm\genfrac{}{}{0pt}{1}{\matulaag}{17}, \hskip 3mm\genfrac{}{}{0pt}{1}{\matulaaa\matulab}{22}, \hskip 3mm\genfrac{}{}{0pt}{1}{\matulabc}{23}, \hskip 3mm\genfrac{}{}{0pt}{1}{\matulae\matulae}{25}, \hskip 3mm\genfrac{}{}{0pt}{1}{\matulabi}{29}, \hskip 3mm\genfrac{}{}{0pt}{1}{\matulaaa\matulac}{33}, \hskip 3mm\genfrac{}{}{0pt}{1}{\matulada}{41},\ldots $$
Pour un tour d'horizon r\'ecent des diverses fonctions que l'on peut d\'efinir sur les entiers via leur arborification, voir \cite{D}.
\subsection{Une graduation}
On d\'efinit le degr\'e $\delta(n)$ d'un nombre naturel (non nul) $n$ par $\delta(n)=v(n)+a(n)$. Autrement dit~:
\begin{equation}
\delta(n)=2v(n)-\|n\|.
\end{equation}
Le degr\'e est compl\`etement additif. Il est facile de voir que pour tout entier $m$, l'ensemble des entiers $n$ de degr\'e $m$ est fini. Ci-dessous, la liste des entiers de de degr\'e $\le 5$ avec leur for\^et correspondante~:\\

Degr\'e $0$ : $1$. \hskip 8mm Degr\'e $1$ : $\genfrac{}{}{0pt}{1}{\matulab}{2}$. \hskip 8mm Degr\'e $2$ : $\genfrac{}{}{0pt}{1}{\matulab\matulab}{4}$. 

Degr\'e $3$ : $\genfrac{}{}{0pt}{1}{\matulac}{3}, \genfrac{}{}{0pt}{1}{\matulab\matulab\matulab}{8}$.\hskip 8mm Degr\'e $4$ : $\genfrac{}{}{0pt}{1}{\matulac\matulab}{6},\genfrac{}{}{0pt}{1}{\matulab\matulab\matulab\matulab}{16}$.\hskip 8mm
Degr\'e $5$ : $\genfrac{}{}{0pt}{1}{\matulae}{5}, \genfrac{}{}{0pt}{1}{\matulag}{7}, \genfrac{}{}{0pt}{1}{\matulac\matulab\matulab}{12}, \genfrac{}{}{0pt}{1}{\matulab\matulab\matulab\matulab\matulab}{32}$.\\

\noindent En particulier, le degr\'e $\delta$ a m\^eme parit\'e que le nombre de facteurs. Pour donner quelques autres exemples, $\genfrac{}{}{0pt}{1}{\matulaac}{13}$ est de degr\'e $7$, $\genfrac{}{}{0pt}{1}{\matuladg}{47}$ est de degr\'e $11$,  $\genfrac{}{}{0pt}{1}{\matulaai\matulac}{57}$ est de degr\'e $10$ et $\genfrac{}{}{0pt}{1}{\matulaec\matulag\matulag}{2597}$ est de degr\'e $19$. On remarque que le produit de fusion et la coupe d'une branche abaissent tous les deux le degr\'e d'une unit\'e, puisqu'il en r\'esulte la suppression d'un sommet et d'une ar\^ete respectivement.
\section{Etude des fonctions sommatoires des fonctions de M\" obius et de Liouville}
Le propos de cette section est d'obtenir une majoration de $|M(n)|$ pour tout entier $n$, si possible plus pr\'ecise que la majoration triviale $|M(n)|\le n/4$ obtenue en appariant chaque nombre pair sans facteur carré avec sa moiti\'e, ou m\^eme que la majoration non triviale $|M(n)|=o(n)$, \'equivalente au th\'eor\`eme des nombres premiers \cite{S48, E49}. Le principe est le suivant : r\'ealiser le plus possible de couplages $(k,l)$ de nombres sans facteur carr\'e \`a l'int\'erieur de $\{1,\ldots n\}$ tels que $\mu(k)+\mu(l)=0$. La majoration cherch\'ee est alors obtenue en comptant les nombres ayant \'echapp\'e au processus de couplage. Une fa\c con de chercher un partenaire $l$ d'un nombre $k$ (avec $l<k$) consiste \`a couper l'une des branches de la for\^et de $k$ ou, si l'on pr\'ef\`ere, \`a fusionner deux arbres de la for\^et (en \'evitant les quelques exceptions mentionn\'ees plus haut), en prenant soin de ne pas faire appara\^\i tre de facteurs carr\'es. La deuxi\`eme op\'eration n'est \'evidemment possible que si $k$ poss\`ede au moins deux facteurs premiers.\\

Pour la fonction $L$ sommatoire de la fonction de Liouville, le principe est exactement le m\^eme \`a ceci pr\`es qu'on ne se pr\'eoccupe pas des facteurs carr\'es.\\

On se heurte rapidement \`a de nombreuses difficult\'es. La plus \'evidente r\'eside dans les multiples choix possibles de branches \`a couper ou d'arbres \`a fusionner. Un autre probl\`eme appara\^\i t d'embl\'ee lorsqu'on veut r\'ealiser plusieurs couplages~: il peut se faire que tous les partenaires $l$ possibles d'un nombre $k$ rest\'e seul soient d\'ej\`a membres d'une paire form\'ee pr\'ec\'edemment. Les exemples donn\'es ci-dessous pour des petites valeurs de $n$ (Appendice C) semble toutefois indiquer que la m\'ethode fonctionne bien, reste \`a expliquer pourquoi...
\ignore{
\subsection{Description des couplages possibles}
On \'ecrit $\mathbb N^*$ comme r\'eunion disjointe de deux parties
\begin{equation}
\mathbb N^*=\mathcal D\sqcup\mathcal Q,
\end{equation}
o\`u $\mathcal D$ d\'esigne l'ensemble des entiers non nuls dont tous les facteurs premiers sont distincts, et o\`u $\mathcal Q$ d\'esigne l'ensemble des entiers non nuls contenant au moins un facteur carr\'e non trivial. nous aurons besoin \'egalement du sous-ensemble $\mathcal M$ constitu\'e des entiers dont \textsl{tous} les facteurs sont multiples, i.e. apparaissent au moins deux fois.\\

La r\`egle du jeu est la suivante~: soit $k\in\mathcal D$. Un entier $l<k$, tel que $\mu(k)+\mu(l)=0$, pourra \^etre obtenu soit en coupant l'une des branches de la for\^et de $k$ en \'evitant de couper la branche la plus grande, soit en fusionnant le plus grand arbre de la for\^et de $k$ avec un autre facteur. Les deux exceptions $\genfrac{}{}{0pt}{1}{\matulag\matulae}{35}\mapsto\genfrac{}{}{0pt}{1}{\matulacg}{37}$ et $\genfrac{}{}{0pt}{1}{\matulag\matulag}{49}\mapsto\genfrac{}{}{0pt}{1}{\matulaec}{53}$ doivent \^etre \'evit\'ees. La coupe de branches, en plus de l'\'elimination des deux exceptions $\genfrac{}{}{0pt}{1}{\matulac}{3}\mapsto\genfrac{}{}{0pt}{1}{\matulab\matulab}{4}$ et $\genfrac{}{}{0pt}{1}{\matulae}{5}\mapsto\genfrac{}{}{0pt}{1}{\matulac\matulab}{6}$, doit ob\'eir \'egalement aux restrictions suivantes~:
\begin{enumerate}
\item Pour un arbre dont la fertilit\'e de la racine est sup\'erieure ou \'egale \`a $2$, seules les coupes \`a la racine sont autoris\'ees (la fertilit\'e d'un sommet est par d\'efinition le nombre d'ar\^etes issues de ce sommet).

\item Lorsque la m\^eme branche appara\^\i t plusieurs fois dans le m\^eme facteur, sa coupe n'est pas autoris\'ee. 

\item Pour un arbre avec racine de fertilit\'e $1$ (correspondant \`a un nombre premier de rang premier), on autorise \'egalement les coupes au sommet imm\'ediatement au-dessus. Si celui-ci est \'egalement de fertilit\'e $1$ on autorise aussi les coupes au sommet situ\'e encore un \'etage au-dessus, et ainsi de suite jusqu'\`a rencontrer un sommet de fertilit\'e $\ge 2$. Par exemple, $\genfrac{}{}{0pt}{1}{\matulaagi}{179}$ peut \^etre coupl\'e \`a $\genfrac{}{}{0pt}{1}{\matulada\matulab}{82}$, $\genfrac{}{}{0pt}{1}{\matulaac\matulac}{39}$, $\genfrac{}{}{0pt}{1}{\matulaca\matulab}{62}$ et \'eventuellement $\genfrac{}{}{0pt}{1}{\matulaaa\matulac}{33}$.
\end{enumerate}
\ignore{Par ailleurs on autorise le couplage exotique $\genfrac{}{}{0pt}{1}{\matulag}{7}\mapsto \genfrac{}{}{0pt}{1}{\matulab\matulab}{4}$, qui diminue le degr\'e de $3$, en plus du couplage $\genfrac{}{}{0pt}{1}{\matulag}{7}\mapsto \genfrac{}{}{0pt}{1}{\matulac\matulab}{6}$.}
\subsection{Impasses et nombres univoques}
Un nombre $k\in\mathcal D$ est une \textsl{impasse} si aucun nombre $l<k$ ne peut lui \^etre appari\'e en suivant les r\`egles du jeu ci-dessus. On dit que $k$ est \textsl{univoque} si un et un seul nombre $l<k$ peut lui \^etre appari\'e de cette mani\`ere. On note $\Cal C$ et $\Cal U$ l'ensemble des impasses et des nombres univoques respectivement, puis, pour tout $n\in\N^*$, $\Cal C_n:=\Cal C\cap\{1,\ldots,n\}$ et $\Cal U_n:=\Cal U\cap\{1,\ldots,n\}$. L'ensemble $\Cal C$ est constitu\'e~:
\begin{itemize}
\item des nombres $2$, $3$, $5$ et $35$,
\item de $p(\mathcal M)$, c'est-\`a-dire des nombres premiers dont le rang n'a que des facteurs multiples~: en particulier tous les nombres premiers de rang carr\'e sont dans $\Cal C$,
\item des nombres du type $p_{p_nn}$ o\`u $n$ appartient \`a $\mathcal M$, par exemple $\genfrac{}{}{0pt}{1}{\matulaaog}{107}$, correspondant \`a $n=4$.
\end{itemize}
\vskip 1mm
L'ensemble $\Cal U$ quant \`a lui est constitu\'e~:
\begin{itemize}
\item de tous les nombres premiers de rang  $p^r$ avec $p$ premier et $r\ge 2$,
\item des produits d'un premier de rang $p^r$ comme ci-dessus avec un nombre dont tous les facteurs premiers sont multiples.
\item des nombres $\genfrac{}{}{0pt}{1}{\matulac\matulab}{6}$, $\genfrac{}{}{0pt}{1}{\matulae\matulab}{10}$, $\genfrac{}{}{0pt}{1}{\matulae\matulac}{15}$ et $\genfrac{}{}{0pt}{1}{\matulag\matulae}{35}$.
\item des nombres $6m$, $10m$ et $15m$ et $35m$, o\`u $m$ est un nombre dont tous les facteurs premiers sont multiples.
\end{itemize}
\vskip 1mm
\subsection{Construction des couplages}
Le but de ce paragraphe est d'\'etablir le th\'eor\`eme crucial suivant~:
\begin{thm}
Pour tout $n\in\N^*$ il existe une famille de couplages $k\mapsto l$ \redtext{avec $k<l$ ou non?} telle que l'ensemble des $m\in\{1,\ldots, n\}$ rest\'es seuls soit inclus dans $\Cal U_n\amalg\Cal C_n$.
\end{thm}
\begin{proof}
On proc\`ede par r\'ecurrence sur $n$. Le r\'esultat est trivial pour les tr\`es petites valeurs de $n$, et facile \`a \'etablir, en suivant les r\`egles du jeu \'enonc\'ees ci-dessus, pour des valeurs un peu plus grandes. Ci-dessous, une famille de couplages possibles pour $n=41$~:

\noindent $(\genfrac{}{}{0pt}{1}{\matulac\matulab}{6},\genfrac{}{}{0pt}{1}{\matulac}{3})$, \hskip 5mm $(\genfrac{}{}{0pt}{1}{\matulae\matulab}{10},\genfrac{}{}{0pt}{1}{\matulae}{5})$, \hskip 5mm $(\genfrac{}{}{0pt}{1}{\matulaaa}{11},\genfrac{}{}{0pt}{1}{\matulac\matulac}{9})$, \hskip 5mm  $(\genfrac{}{}{0pt}{1}{\matulag\matulab}{14},\genfrac{}{}{0pt}{1}{\matulag}{7})$, \hskip 5mm  $(\genfrac{}{}{0pt}{1}{\matulag\matulac}{21},\genfrac{}{}{0pt}{1}{\matulaai}{19})$,  \hskip 5mm  $(\genfrac{}{}{0pt}{1}{\matulaac\matulab}{26},\genfrac{}{}{0pt}{1}{\matulaac}{13})$,  \hskip 5mm $(\genfrac{}{}{0pt}{1}{\matulaca}{31},\genfrac{}{}{0pt}{1}{\matulae\matulac}{15})$, \hskip 5mm $(\genfrac{}{}{0pt}{1}{\matulaaa\matulac}{33},\genfrac{}{}{0pt}{1}{\matulabi}{29})$, \\

\noindent  $(\genfrac{}{}{0pt}{1}{\matulaag\matulab}{34},\genfrac{}{}{0pt}{1}{\matulaag}{17})$,\hskip 5mm $(\genfrac{}{}{0pt}{1}{\matulag\matulae}{35},\genfrac{}{}{0pt}{1}{\matulae\matulac\matulab}{30})$,  \hskip 5mm $(\genfrac{}{}{0pt}{1}{\matulaai\matulab}{38},\genfrac{}{}{0pt}{1}{\matulag\matulab\matulab}{28})$, \hskip 5mm  $(\genfrac{}{}{0pt}{1}{\matulaac\matulac}{39},\genfrac{}{}{0pt}{1}{\matulacg}{37})$,\hskip 5mm $(\genfrac{}{}{0pt}{1}{\matulada}{41},\genfrac{}{}{0pt}{1}{\matulaaa\matulab}{22})$.
\vskip 2mm
\noindent Les nombres rest\'es seuls sont~:

\noindent $1$, $\genfrac{}{}{0pt}{1}{\matulab}{2}$, \hskip 2mm $\genfrac{}{}{0pt}{1}{\matulab\matulab}{4}$,  \hskip 2mm  $\genfrac{}{}{0pt}{1}{\matulab\matulab\matulab}{8}$,  \hskip 2mm $\genfrac{}{}{0pt}{1}{\matulac\matulab\matulab}{12}$, $\genfrac{}{}{0pt}{1}{\matulab\matulab\matulab\matulab}{16}$,  \hskip 2mm $\genfrac{}{}{0pt}{1}{\matulac\matulac\matulab}{18}$, \hskip 2mm $\genfrac{}{}{0pt}{1}{\matulae\matulab\matulab}{20}$,  \hskip 2mm $\genfrac{}{}{0pt}{1}{\matulabc}{23}$,  \hskip 2mm $\genfrac{}{}{0pt}{1}{\matulac\matulab\matulab\matulab}{24}$, \hskip 2mm $\genfrac{}{}{0pt}{1}{\matulae\matulae}{25}$,  \hskip 2mm $\genfrac{}{}{0pt}{1}{\matulac\matulac\matulac}{27}$,  \hskip 2mm $\genfrac{}{}{0pt}{1}{\matulab\matulab\matulab\matulab\matulab}{32}$, \hskip 2mm  $\genfrac{}{}{0pt}{1}{\matulac\matulac\matulab\matulab}{36}$,  \hskip 2mm $\genfrac{}{}{0pt}{1}{\matulae\matulab\matulab\matulab}{40}$.
\vskip 2mm\noindent
Ils sont tous dans $\Cal C$ except\'e $23$ qui est dans $\Cal U$.\\

Supposons donc le th\'eor\`eme vrai pour $n-1$. La famille de couplages pour $n$ est toute trouv\'ee si $n$ appartient \`a $\Cal C$ ou \`a $\Cal U$~: il suffit de laisser $n$ tout seul et de garder la famille de couplages d\'ej\`a trouv\'ee pour $n-1$. Dans le cas contraire on a au moins deux possibilit\'es de coupler $n$ avec un $k<n$ tel que $\lambda(k)+\lambda(n)=0$. On en choisit un. Si $k$ \'etait tout seul dans la famille de couplages pr\'ec\'edente, le processus est termin\'e. Mais en g\'en\'eral $k$ \'etait d\'ej\`a coupl\'e \`a un autre $l<n$, avec $l<k$ ou $k<l$. C'est donc $l$ qui se retrouve maintenant seul. Si $l\in\Cal C$ ou $l\in\Cal U$ le processus s'arr\^ete, sinon on essaie de l'apparier avec un certain $m<l$, et le raisonnement peut continuer.\\

On esp\`ere ainsi, au bout d'un nombre fini d'\'etapes, obtenir un nombre solitaire $m\in\{1,\ldots,n\}$ qui soit dans $\Cal C$ ou dans $\Cal U$. C'est sans doute souvent possible, mais rien n'exclut la possibilit\'e d'un cycle infernal, qui verrait d\'efiler ind\'efiniment des nombres dans $\{1,\ldots ,n\}\backslash(\Cal C_n\amalg\Cal U_n)$. \\

Pour contourner ce probl\`eme, il faut assouplir un peu la r\`egle de formation des paires, en permettant par exemple des \'echanges entre deux paires bien choisies. Un couplage issu de ces r\`egles du jeu \'etendues (encore \`a d\'efinir) sera dit \textsl{graphique} s'il est obtenu par fusion ou coupe d'une branche en suivant les r\`egles du jeu plus strictes du paragraphe pr\'ec\'edent, \textsl{non graphique} sinon. Hormis le cas trivial \'evoqu\'e ci-dessus (\`a savoir, $n\in\Cal C\amalg\Cal U)$, on distingue sept cas diff\'erents qui sont visualis\'es ci-dessous~:
$$\couplages$$
\vskip -6mm
\hskip0.9cm
\begin{minipage}{5.50in}
{\sl Le sommet le plus gros d\'esigne l'entier $n$, les sommets plus petits d\'esignent des entiers $k$ plus petits que $n$. Un sommet entour\'e d'un carr\'e gris d\'esigne un nombre laiss\'e seul dans la famille de couplages relative \`a $n-1$, un sommet entour\'e d'un rectangle gris d\'esigne un nombre dans $\Cal U$, le sommet soulign\'e du cas num\'ero $4$ d\'esigne un nombre dans $\Cal C$. Une ar\^ete pleine d\'esigne un couplage de la famille relative \`a $n-1$, qui peut \^etre graphique ou non. Une ar\^ete en pointill\'e d\'esigne un couplage graphique possible (mais non effectu\'e), et enfin l'ar\^ete boucl\'ee d\'esigne le couplage qui sera effectivement appliqu\'e. Des petites boucles serr\'ees d\'esignent un couplage graphique. Le sommet entour\'e d'un cercle gris est celui qui sera laiss\'e seul apr\`es application de ce couplage.}
\end{minipage}
\vskip 10mm
\noindent\textbf{Premier cas :} l'entier $n$ est coupl\'e graphiquement \`a un entier $k<n$ qui \'etait seul, ce qui r\'esout le probl\`eme.\\

\noindent\textbf{Deuxi\`eme cas :} l'entier $n$ est coupl\'e graphiquement \`a un entier $k<n$, qui \'etait lui-m\^eme coupl\'e \`a un entier $l<k$. L'entier $l$ se retrouve donc seul, et v\'erifie $\delta(n)-\delta(l)\ge 2$. On recommence alors le processus de couplage avec $l$ \`a la place de $n$.\\

\noindent\textbf{Troisi\`eme cas :} l'entier $n$ est coupl\'e graphiquement \`a un entier $k<n$, qui \'etait lui-m\^eme coupl\'e \`a un entier $l>k$ avec $l\in\Cal U$. L'entier $l$ se retrouve donc seul et peut le rester, car il est suppos\'e univoque, ce qui r\'esout le probl\`eme.\\

\noindent\textbf{Quatri\`eme cas :} l'entier $n$ est coupl\'e graphiquement \`a un entier $k<n$ avec $k\in\Cal C$, qui \'etait lui-m\^eme coupl\'e \`a un entier $l>k$. L'entier $k$ \'etant une impasse, on ne peut pas appliquer les proc\'ed\'es d\'ecrits aux cas $4$ et $5$ ci-dessous. Il faut donc recommencer le processus de couplage avec $l$ \`a la place de $n$.\\

\noindent\textbf{Cinqui\`eme cas :} l'entier $n$ est d'abord coupl\'e graphiquement \`a un entier $k<n$, qui \'etait lui-m\^eme coupl\'e \`a un entier $l>k$ a priori quelconque, et on suppose que $k$ peut par ailleurs \^etre coupl\'e \`a un entier $m<k$ par fusion ou coupe d'une branche, lui-m\^eme coupl\'e \`a $r<m$. On couple alors $n$ avec $r$ et on laisse inchang\'e le couplage $(l,k)$. L'entier $m$ reste seul et on a $\delta(n)-\delta(m)=2$. On recommence alors le processus de couplage avec $m$ \`a la place de $n$. On remarque que le couplage $(n,r)$ n'est pas graphique a priori.\\

\noindent\textbf{Sixi\`eme cas :} l'entier $n$ est d'abord coupl\'e graphiquement \`a un entier $k<n$, qui \'etait lui-m\^eme coupl\'e \`a un entier $l>k$ a priori quelconque, et on suppose que $k$ peut par ailleurs \^etre coupl\'e \`a un entier $m<k$ par fusion ou coupe d'une branche, lui-m\^eme coupl\'e \`a $r>m$.  On couple alors $n$ avec $r$ et on laisse inchang\'e le couplage $(l,k)$. L'entier $m$ reste seul et on a $\delta(n)-\delta(m)=2$. On recommence alors le processus de couplage avec $m$ \`a la place de $n$. Le couplage $(n,r)$ n'est pas forc\'ement graphique, et il est important de noter qu'il n'est pas a priori possible de comparer $\delta(n)$ (le degr\'e de $n$) et $\delta(r)$ (le degr\'e du nouveau partenaire de $n$).\\

\noindent\textbf{Septi\`eme cas :} l'entier $n$ est d'abord coupl\'e graphiquement \`a un entier $k<n$, qui \'etait lui-m\^eme coupl\'e \`a un entier $l>k$ a priori quelconque, et on suppose que $k$ peut par ailleurs \^etre coupl\'e graphiquement \`a un entier $m<k$ qui \'etait seul, par fusion ou coupe d'une branche. Il est alors impossible de proc\'eder comme dans les cas $5$ et $6$~: on effectue le couplage graphique $(n,k)$, et il faut recommencer le processus de couplage avec $l$ \`a la place de $n$.
\end{proof}
\redtext{Ca prend encore l'eau, \`a cause de couplages non-graphiques introduits aux cas $5$ et $6$. Il se peut alors que, par exemple, le nombre rest\'e seul au cas $2$ soit de degr\'e $\ge \delta(n)$...}
\vfill
\eject
\ignore{
\subsection{Le principe de compensation}
\redtext{Ca ne peut pas marcher comme \c ca. A revoir...}
\begin{prop}\label{ppdec}
Soit $\Phi$ une application de $\{1,\ldots ,n\}$ dans $\N^*$ telle que $\lambda\big(\Phi(k)\big)=-\lambda(k)$ pour tout $1\le k\le n$. Soit $D$ une partie de $\{1,\ldots ,n\}$ telle que $\Phi\restr{D}$ soit injective et telle que $\Phi(k)<k$ pour tout $k\in D$. Alors $|\Lambda(n)|\le n-|D|$.
\end{prop}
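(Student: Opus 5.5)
The plan is to read the data $(\Phi,D)$ as a directed graph on $\{1,\ldots,n\}$ and to cut the sum $\Lambda(n)=\sum_{k\le n}\lambda(k)$ (that is, $L(n)$) along the connected components of that graph. The vertices are the integers $1,\ldots,n$. For each $k\in D$ we draw one arrow $k\to\Phi(k)$. Since $\Phi(k)<k\le n$, the target lies in $\{1,\ldots,n\}$, so the graph is well defined and no arrow leaves the set.

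First I will record the local structure of this graph.
\begin{itemize}
\item Each vertex has out-degree at most $1$: exactly $1$ if it lies in $D$, and $0$ otherwise.
\item Each vertex has in-degree at most $1$, because $\Phi\restr{D}$ is injective.
\item There is no directed cycle, since the labels strictly decrease along every arrow.
\end{itemize}
A finite graph with these three properties is a disjoint union of directed paths $k_0\to k_1\to\cdots\to k_r$ with $k_0>k_1>\cdots>k_r$, where isolated vertices count as paths with $r=0$. In such a path every vertex except the last one $k_r$ lies in $D$, and $k_r\notin D$. So each path contains exactly one element of $\{1,\ldots,n\}\setminus D$. It follows that the number of paths equals $n-|D|$.

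Next, along a path the hypothesis $\lambda(\Phi(k))=-\lambda(k)$ gives $\lambda(k_{i+1})=-\lambda(k_i)$, so the values of $\lambda$ alternate: $\lambda(k_i)=(-1)^i\lambda(k_0)$. The contribution of the path to $\Lambda(n)$ is therefore $0$ if it has an even number of vertices, and $\pm1$ if it has an odd number. Summing over the paths, which partition $\{1,\ldots,n\}$, gives
\[|\Lambda(n)|\le\#\{\text{paths with an odd number of vertices}\}\le n-|D|.\]

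The only step that needs care is the claim that the components are paths. This is where injectivity and the strict decrease $\Phi(k)<k$ are both essential. Injectivity rules out two arrows meeting at one vertex, which would break the alternating pairing. The decrease rules out cycles and guarantees that the endpoint of each chain lies in $\{1,\ldots,n\}$, even though $\Phi$ takes values in $\N^*$.

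Note that $\Phi(D)$ is not required to be disjoint from $D$. Chains such as $k\to\Phi(k)\to\Phi(\Phi(k))$ are genuinely allowed, and the path decomposition is exactly what handles them. The bound is crude, since even paths contribute nothing, but it is what the statement asks for.
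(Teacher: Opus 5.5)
Your proof is correct, but it takes a different route from the paper's.

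The paper never builds the graph. It writes $\{1,\ldots,n\}$ in two ways, as $D\sqcup C$ and as $\Phi(D)\sqcup Q$. It notes that $|Q|=|C|$, since $\Phi$ is injective on $D$ with values in $\{1,\ldots,n\}$. It also notes that $\Lambda_{\Phi(D)}=-\Lambda_D$, by injectivity and the sign hypothesis. Adding the two resulting expressions for $\Lambda(n)$ gives $2\Lambda(n)=\Lambda_C+\Lambda_Q$, hence $|\Lambda(n)|\le |C|=n-|D|$.

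In your language, $C$ is the set of endpoints $k_r$ of your chains and $Q$ the set of their starting points $k_0$. An isolated vertex belongs to both. So the paper's identity is what your chain computation gives after summing over chains: $\Lambda_C+\Lambda_Q=2\sum\lambda(k_0)$, the sum running over chains with an odd number of vertices. Even chains cancel.

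The paper's argument is shorter. It also shows that the only role of $\Phi(k)<k$ is to keep $\Phi(D)$ inside $\{1,\ldots,n\}$; acyclicity is never used.

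Your decomposition costs a bit more structure but gives more:
\begin{itemize}
\item the exact formula $\Lambda(n)=\sum\lambda(k_0)$ over odd chains;
\item the sharper bound by the number of odd chains, which is the natural quantity to control when $\Phi$ is built from successive cuts and fusions as in the paper's pairings.
\end{itemize}

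One small overstatement: the strict decrease is not essential to your argument either. If you only assume $\Phi(D)\subseteq\{1,\ldots,n\}$, components may also be directed cycles. Such a cycle must have even length by sign alternation, lies entirely in $D$, and contributes $0$. So the count of chains is still $n-|D|$ and the same bound holds.
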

On appellera \textsl{application compensatoire} toute application $\Phi$ qui change le signe de la fonction de Liouville comme ci-dessus.
\begin{proof}Pour toute partie $P$ de l'intervalle $\{1,\ldots ,n\}$ on pose~:
\begin{equation}
\Lambda_P:=\sum_{k\in P}\lambda(k).
\end{equation}
Soit $C$ le compl\'ementaire de $D$ dans $\{1,\ldots ,n\}$. On note $Q$ le compl\'ementaire de $\Phi(D)$ dans $\{1,\ldots ,n\}$, qui a manifestement le m\^eme cardinal que $C$. On a alors~:
\begin{eqnarray*}
\Lambda(n)&=&\Lambda_D+\Lambda_C\\
&=&\Lambda_{\Phi(D)}+\Lambda_Q.
\end{eqnarray*}
En utilisant l'\'egalit\'e $\Lambda_D+\Lambda_{\Phi(D)}=0$ et en sommant on a donc $2\Lambda(n)=\Lambda_C+\Lambda_Q$, d'o\`u $|\Lambda(n)|\le |C|$, ce qui d\'emontre la proposition \ref{ppdec}. 
\end{proof}
Un exemple d'application $\Phi$ satisfaisant les hypoth\`eses de la proposition \ref{ppdec} est $k\mapsto pk$ o\`u $p$ est n'importe quel nombre premier. Elle a le d\'efaut de n'aboutir \`a aucune majoration non-triviale, puisque dans ce cas $D=\emptyset$. Un autre exemple est donn\'e par $\Phi(2k)=k$ et $\Phi(2k+1)=p(2k+1)$ o\`u $p$ est un nombre premier plus grand que $n$. L'ensemble $D$ maximal possible (resp. son compl\'ementaire $C$) est constitu\'e des entiers pairs (resp. impairs) de $\{1,\ldots ,n\}$, ce qui aboutit \`a la majoration $|\Lambda(n)|\le [(n+1)/2]$. Il s'agit maintenant de construire une applicationtion compensatoire $\Phi$ avec un ensemble $D$ le plus gros possible.
\begin{rmk}
Le principe de compensation est valable en rempla\c cant la fonction de Liouville $\lambda$ par n'importe quelle application $f$ d\'efinie sur $\N^*$ \`a valeurs dans $\{-1,1\}$, ou m\^eme dans $\{-1,0,1\}$, en rempla\c cant alors $\Lambda$ par la fonction sommatoire $F$ de $f$.
\end{rmk}
\subsection{Construction d'une application compensatoire}
Nous la construisons d'abord sur les nombres premiers avant de l'\'etendre aux nombres compos\'es, en esquissant d'abord deux tentatives provisoires pour mieux faire comprendre la d\'emarche. Nous passerons librement de la repr\'esentation traditionnelle des nombres \`a la repr\'esentation par les for\^ets et vice-versa en appliquant, souvent tacitement, l'arborification de Cappello $\Cal A$ ou son inverse.\\
\begin{itemize}
\item
En vue d'appliquer la proposition \ref{butcher}, une premi\`ere tentative consiste \`a couper la plus petite branche de l'arbre, c'est-\`a-dire que l'on pose $\Phi\big(B+(t_1\cdots t_k)\big)=t_1\Phi\big(B+(t_1\cdots t_k)\big)$ o\`u les branches $t_j$ sont class\'ees par ordre croissant. Elle donne le m\^eme r\'esultat sur $\racine\butcher t$ et sur $t\butcher\racine$ pour tout arbre $t$, \`a savoir $\racine\, t$. Par exemple $\Phi(\arbrecb)=\Phi(\arbrecd)=\racine\arbrebb$. La restriction aux arbres \`a au moins deux branches est en revanche injective.\\

\item
Nous corrigerons cette premi\`ere tentative en posant comme ci-dessus $\Phi\big(B+(t_1\cdots t_k)\big)=t_1\Phi\big(B+(t_1\cdots t_k)\big)$ si $k\ge 2$, et $\Phi\big(B_+(t)\big)=\Phi(t\butcher\racine)=\racine\racine\racine\, t$. Autrement dit en \'evitant le langage des arbres, $\Phi(p_m)=qp_{m/q}$ si $m$ n'est pas premier et si $q$ est le plus petit facteur premier de $m$, et $\Phi(p_m)=8m$ si $m$ est premier. On voit facilement que $\Phi$ est alors injective. On essaie alors d'\'etendre l'application $\Phi$ aux for\^ets, c'est-\`a-dire \`a tous les nombres premiers ou non. On posera $\Phi(t_1\cdots t_k):=t_1\cdots t_{k-1}\Phi(t_k)$ o\`u $t_k$ est le plus grand arbre de la for\^et. L'injectivit\'e n'est alors plus assur\'ee, puisque l'on a par exemple~:
\begin{equation*}
\Phi(\racine\racine \arbrebb)=\Phi(\arbrecb)=\racine\racine\racine\arbrea.
\end{equation*}\\

\item
On corrige encore une fois en gardant la m\^eme formule pour les arbres \`a une seule branche (c'est-\`a-dire pour les nombres premiers de rang premier), mais en choisissant, pour les arbres \`a plusieurs branches, de couper la plus petite branche diff\'erente de $\racine$. Il ne restera plus qu'\`a traiter le cas des couronnes $\arbrea,\arbrebb,\arbrecd,\arbredh, (\ldots)$.
\end{itemize}.
\vskip 2mm
Nous adoptons donc la d\'efinition formelle suivante pour $\Phi$, en notant $\Phi^\prec$ la version arborifi\'ee donn\'ee par~:
\begin{equation}
\Phi^\prec=\Cal A\circ\Phi\circ\Cal A^{-1}.
\end{equation}
\begin{defn}
L'application $\Phi^\prec:\Cal A(\{1,\ldots,n\})\to \Cal F$ est d\'efinie de la mani\`ere suivante~:
\begin{enumerate}
\item Pour tout arbre $t=B_+(t_1,\ldots,t_k)$ avec $k\ge 2$, $t_1\le\cdots\le t_k$ et $t_k\not =\racine$, on pose~:
\begin{equation}
\Phi^\prec(t)=t_jB_+(t_1,\ldots,\widehat{t_j},\ldots,t_k),
\end{equation}
o\`u $j$ est le plus petit indice tel que $t_j\not =\racine$.
\item Pour toute couronne $c_k:=B_+(\racine^k)$ avec $k\ge 2$ on pose $\Phi(c_k)=\racine\, c_{k-1}$. Par exemple, $\Phi(\arbredh)=\racine\arbrecd$.
\item Pour tout arbre $t=B_+(u)$ \`a une seule branche, except\'e $t=\arbrea$, on pose~:
\begin{equation}
\Phi(t)=\racine\racine\racine\Phi^\prec(u).
\end{equation}
\item Pour toute for\^et $f=t_1\cdots t_k \in \Cal A(\{1,\ldots,n\})$ avec $k\ge 2$ et $t_1\le\cdots\le t_k$, on pose~:
\begin{equation}
\Phi^\prec(f):=t_1\cdots t_{k-1}\Phi^\prec(t_k).
\end{equation}
\end{enumerate}
\end{defn}
}
}
\section*{Appendice A~: tableau de valeurs pour le rapport $p_kp_l/p_{kl}$}
\begin{equation*}
\begin{disarray}{c|ccccccccccccc}
l\backslash k&2&3&4&5&6&7&8
&9&10&11&12&13\\
\hline\\
2&\frac{9}{7}\\
&&&\\
3&\frac{15}{13}&\frac{25}{23}\\
&&&&&&&&&&&&&\\
4&\frac{21}{19}&\mathbf{\frac{35}{37}}&\mathbf{\frac{49}{53}}\\
&&&&&&&&&&&&&\\
5&\frac{33}{29}&\frac{55}{47}&\frac{77}{71}&\frac{121}{97}\\
&&&&&&&&&&&&&\\
6&\frac{39}{37}&\frac{65}{61}&\frac{91}{89}&\frac{143}{113}&\frac{169}{151}\\
&&&&&&&&&&&&&\\
7&\frac{51}{43}&\frac{85}{73}&\frac{119}{107}&\frac{187}{149}&\frac{221}{181}&\frac{289}{227}\\
&&&&&&&&&&&&&\\
8&\frac{57}{53}&\frac{95}{89}&\frac{133}{131}&\frac{209}{173}&\frac{247}{223}&\frac{323}{263}&\frac{361}{311}\\
&&&&&&&&&&&&&\\
9&\frac {69}{61}&\frac{115}{103}&\frac{161}{151}&\frac{253}{197}&\frac{299}{251}&\frac{391}{307}&\frac{437}{359}&\frac{529}{419}\\
&&&&&&&&&&&&&\\
10&\frac{87}{71}&\frac{145}{113}&\frac{203}{173}&\frac{319}{229}&\frac{377}{281}&\frac{493}{349}&\frac{551}{409}&\frac{621}{463}&\frac{841}{541}\\
&&&&&&&&&&&&&\\
11&\frac{93}{79}&\frac{165}{137}&\frac{217}{193}&\frac{341}{257}&\frac{403}{317}&\frac{527}{389}
&\frac{589}{457}&\frac{713}{523}&\frac{899}{601}&\frac{961}{661}\\
&&&&&&&&&&&&&\\
12&\frac{111}{89}&\frac{185}{151}&\frac{259}{223}&\frac{407}{281}&\frac{481}{359}&\frac{629}{433}
&\frac{703}{503}&\frac{851}{593}&\frac{1073}{659}&\frac{1147}{739}&\frac{1369}{827}\\
&&&&&&&&&&&&&\\
13&\frac{123}{101}&\frac{205}{167}&\frac{287}{239}&\frac{451}{313}&\frac{533}{397}&\frac{697}{467}&\frac{779}{557}&\frac{943}{643}&\frac{1189}{733}&\frac{1271}{823}&\frac{1517}{911}
&\frac{1681}{1009}\\
&&&&&\\
14&\frac {129}{107}&\frac{215}{181}&\frac{301}{263}&&\frac{559}{433}\\
&&&\\
15&\frac{141}{113}&\frac{235}{197}&\frac{329}{281}\\
&&&\\
16&\frac{159}{131}&\frac{265}{223}&\frac{371}{311}\\
&&&\\
17&\frac{177}{139}&\frac{295}{233}&\frac{413}{337}\\
&&&\\
18&\frac{183}{151}&\frac{305}{251}&\frac{427}{359}\\
&&&\\
19&\frac{201}{163}&\frac{335}{269}&\frac{469}{383}\\
&&&\\
20&\frac{213}{173}&\frac{355}{281}&\frac{497}{409}\\
&&&\\
21&\frac{219}{181}&\frac{365}{307}&\frac{511}{433}\\
&&&\\
22&&&\frac{553}{457}\\
&&&\\
23&&&\frac{581}{479}
\end{disarray}
\end{equation*}
\eject
\section*{Appendice B~: arborification des nombres entiers de 1 \`a 1000}
\def\Ma{\hskip 2mm\genfrac{}{}{0pt}{1}{}{1}}
\def\Mb{\hskip 2mm\genfrac{}{}{0pt}{1}{\matulab}{2}}
\def\Mc{\hskip 2mm\genfrac{}{}{0pt}{1}{\matulac}{3}}
\def\Md{\hskip 2mm\genfrac{}{}{0pt}{1}{\matulab\matulab}{4}}
\def\Me{\hskip 2mm\genfrac{}{}{0pt}{1}{\matulae}{5}}
\def\Mf{\hskip 2mm\genfrac{}{}{0pt}{1}{\matulac\matulab}{6}}
\def\Mg{\hskip 2mm\genfrac{}{}{0pt}{1}{\matulag}{7}}
\def\Mh{\hskip 2mm\genfrac{}{}{0pt}{1}{\matulab\matulab\matulab}{8}}
\def\Mi{\hskip 2mm\genfrac{}{}{0pt}{1}{\matulac\matulac}{9}}
\def\Mao{\hskip 2mm\genfrac{}{}{0pt}{1}{\matulae\matulab}{10}}
\def\Maa{\hskip 2mm\genfrac{}{}{0pt}{1}{\matulaaa}{11}}
\def\Mab{\hskip 2mm\genfrac{}{}{0pt}{1}{\matulac\matulab\matulab}{12}}
\def\Mac{\hskip 2mm\genfrac{}{}{0pt}{1}{\matulaac}{13}}
\def\Mad{\hskip 2mm\genfrac{}{}{0pt}{1}{\matulag\matulab}{14}}
\def\Mae{\hskip 2mm\genfrac{}{}{0pt}{1}{\matulae\matulac}{15}}
\def\Maf{\hskip 2mm\genfrac{}{}{0pt}{1}{\matulab\matulab\matulab\matulab}{16}}
\def\Mag{\hskip 2mm\genfrac{}{}{0pt}{1}{\matulaag}{17}}
\def\Mah{\hskip 2mm\genfrac{}{}{0pt}{1}{\matulac\matulac\matulab}{18}}
\def\Mai{\hskip 2mm\genfrac{}{}{0pt}{1}{\matulaai}{19}}
\def\Mbo{\hskip 2mm\genfrac{}{}{0pt}{1}{\matulae\matulab\matulab}{20}}

\def\Mba{\hskip 2mm\genfrac{}{}{0pt}{1}{\matulag\matulac}{21}}
\def\Mbb{\hskip 2mm\genfrac{}{}{0pt}{1}{\matulaaa\matulab}{22}}
\def\Mbc{\hskip 2mm\genfrac{}{}{0pt}{1}{\matulabc}{23}}
\def\Mbd{\hskip 2mm\genfrac{}{}{0pt}{1}{\matulac\matulab\matulab\matulab}{24}}
\def\Mbe{\hskip 2mm\genfrac{}{}{0pt}{1}{\matulae\matulae}{25}}
\def\Mbf{\hskip 2mm\genfrac{}{}{0pt}{1}{\matulaac\matulab}{26}}
\def\Mbg{\hskip 2mm\genfrac{}{}{0pt}{1}{\matulac\matulac\matulac}{27}}
\def\Mbh{\hskip 2mm\genfrac{}{}{0pt}{1}{\matulag\matulab\matulab}{28}}
\def\Mbi{\hskip 2mm\genfrac{}{}{0pt}{1}{\matulabi}{29}}
\def\Mco{\hskip 2mm\genfrac{}{}{0pt}{1}{\matulae\matulac\matulab}{30}}
\def\Mca{\hskip 2mm\genfrac{}{}{0pt}{1}{\matulaca}{31}}
\def\Mcb{\hskip 2mm\genfrac{}{}{0pt}{1}{\matulab\matulab\matulab\matulab\matulab}{32}}
\def\Mcc{\hskip 2mm\genfrac{}{}{0pt}{1}{\matulaaa\matulac}{33}}
\def\Mcd{\hskip 2mm\genfrac{}{}{0pt}{1}{\matulaag\matulab}{34}}
\def\Mce{\hskip 2mm\genfrac{}{}{0pt}{1}{\matulag\matulae}{35}}
\def\Mcf{\hskip 2mm\genfrac{}{}{0pt}{1}{\matulac\matulac\matulab\matulab}{36}}
\def\Mcg{\hskip 2mm\genfrac{}{}{0pt}{1}{\matulacg}{37}}
\def\Mch{\hskip 2mm\genfrac{}{}{0pt}{1}{\matulaai\matulab}{38}}
\def\Mci{\hskip 2mm\genfrac{}{}{0pt}{1}{\matulaac\matulac}{39}}
\def\Mdo{\hskip 2mm\genfrac{}{}{0pt}{1}{\matulae\matulab\matulab\matulab}{40}}

\def\Mda{\hskip 2mm\genfrac{}{}{0pt}{1}{\matulada}{41}}
\def\Mdb{\hskip 2mm\genfrac{}{}{0pt}{1}{\matulag\matulac\matulab}{42}}
\def\Mdc{\hskip 2mm\genfrac{}{}{0pt}{1}{\matuladc}{43}}
\def\Mdd{\hskip 2mm\genfrac{}{}{0pt}{1}{\matulaaa\matulab\matulab}{44}}
\def\Mde{\hskip 2mm\genfrac{}{}{0pt}{1}{\matulae\matulac\matulac}{45}}
\def\Mdf{\hskip 2mm\genfrac{}{}{0pt}{1}{\matulabc\matulab}{46}}
\def\Mdg{\hskip 2mm\genfrac{}{}{0pt}{1}{\matuladg}{47}}
\def\Mdh{\hskip 2mm\genfrac{}{}{0pt}{1}{\matulac\matulab\matulab\matulab\matulab}{48}}
\def\Mdi{\hskip 2mm\genfrac{}{}{0pt}{1}{\matulag\matulag}{49}}
\def\Meo{\hskip 2mm\genfrac{}{}{0pt}{1}{\matulae\matulae\matulab}{50}}
\def\Mea{\hskip 2mm\genfrac{}{}{0pt}{1}{\matulaag\matulac}{51}}
\def\Meb{\hskip 2mm\genfrac{}{}{0pt}{1}{\matulaac\matulab\matulab}{52}}
\def\Mec{\hskip 2mm\genfrac{}{}{0pt}{1}{\matulaec}{53}}
\def\Med{\hskip 2mm\genfrac{}{}{0pt}{1}{\matulac\matulac\matulac\matulab}{54}} 
\def\Mee{\hskip 2mm\genfrac{}{}{0pt}{1}{\matulaaa\matulae}{55}}
\def\Mef{\hskip 2mm\genfrac{}{}{0pt}{1}{\matulag\matulab\matulab\matulab}{56}} 
\def\Meg{\hskip 2mm\genfrac{}{}{0pt}{1}{\matulaai\matulac}{57}} 
\def\Meh{\hskip 2mm\genfrac{}{}{0pt}{1}{\matulabi\matulab}{58}} 
\def\Mei{\hskip 2mm\genfrac{}{}{0pt}{1}{\matulaei}{59}}
\def\Mfo{\hskip 2mm\genfrac{}{}{0pt}{1}{\matulae\matulac\matulab\matulab}{60}}

\def\Mfa{\hskip 2mm\genfrac{}{}{0pt}{1}{\matulafa}{61}}
\def\Mfb{\hskip 2mm\genfrac{}{}{0pt}{1}{\matulaca\matulab}{62}}
\def\Mfc{\hskip 2mm\genfrac{}{}{0pt}{1}{\matulag\matulac\matulac}{63}}
\def\Mfd{\hskip 2mm\genfrac{}{}{0pt}{1}{\matulab\matulab\matulab\matulab\matulab\matulab}{64}}
\def\Mfe{\hskip 2mm\genfrac{}{}{0pt}{1}{\matulaac\matulae}{65}}
\def\Mff{\hskip 2mm\genfrac{}{}{0pt}{1}{\matulaaa\matulac\matulab}{66}}
\def\Mfg{\hskip 2mm\genfrac{}{}{0pt}{1}{\matulafg}{67}}
\def\Mfh{\hskip 2mm\genfrac{}{}{0pt}{1}{\matulaag\matulab\matulab}{68}}
\def\Mfi{\hskip 2mm\genfrac{}{}{0pt}{1}{\matulabc\matulac}{69}}
\def\Mgo{\hskip 2mm\genfrac{}{}{0pt}{1}{\matulag\matulae\matulab}{70}}
\def\Mga{\hskip 2mm\genfrac{}{}{0pt}{1}{\matulaga}{71}}
\def\Mgb{\hskip 2mm\genfrac{}{}{0pt}{1}{\matulac\matulac\matulab\matulab\matulab}{72}}
\def\Mgc{\hskip 2mm\genfrac{}{}{0pt}{1}{\matulagc}{73}}
\def\Mgd{\hskip 2mm\genfrac{}{}{0pt}{1}{\matulacg\matulab}{74}}
\def\Mge{\hskip 2mm\genfrac{}{}{0pt}{1}{\matulae\matulae\matulac}{75}}
\def\Mgf{\hskip 2mm\genfrac{}{}{0pt}{1}{\matulaai\matulab\matulab}{76}}
\def\Mgg{\hskip 2mm\genfrac{}{}{0pt}{1}{\matulaaa\matulag}{77}}
\def\Mgh{\hskip 2mm\genfrac{}{}{0pt}{1}{\matulaac\matulac\matulab}{78}}
\def\Mgi{\hskip 2mm\genfrac{}{}{0pt}{1}{\matulagi}{79}}
\def\Mho{\hskip 2mm\genfrac{}{}{0pt}{1}{\matulae\matulab\matulab\matulab\matulab}{80}}
\def\Mha{\hskip 2mm\genfrac{}{}{0pt}{1}{\matulac\matulac\matulac\matulac}{81}}
\def\Mhb{\hskip 2mm\genfrac{}{}{0pt}{1}{\matulada\matulab}{82}}
\def\Mhc{\hskip 2mm\genfrac{}{}{0pt}{1}{\matulahc}{83}}
\def\Mhd{\hskip 2mm\genfrac{}{}{0pt}{1}{\matulag\matulac\matulab\matulab}{84}}
\def\Mhe{\hskip 2mm\genfrac{}{}{0pt}{1}{\matulaag\matulae}{85}}
\def\Mhf{\hskip 2mm\genfrac{}{}{0pt}{1}{\matuladc\matulab}{86}}
\def\Mhg{\hskip 2mm\genfrac{}{}{0pt}{1}{\matulabi\matulac}{87}}
\def\Mhh{\hskip 2mm\genfrac{}{}{0pt}{1}{\matulaaa\matulab\matulab\matulab}{88}}
\def\Mhi{\hskip 2mm\genfrac{}{}{0pt}{1}{\matulahi}{89}}
\def\Mio{\hskip 2mm\genfrac{}{}{0pt}{1}{\matulae\matulac\matulac\matulab}{90}}
\def\Mia{\hskip 2mm\genfrac{}{}{0pt}{1}{\matulaac\matulag}{91}}
\def\Mib{\hskip 2mm\genfrac{}{}{0pt}{1}{\matulabc\matulab\matulab}{92}}
\def\Mic{\hskip 2mm\genfrac{}{}{0pt}{1}{\matulaca\matulac}{93}}
\def\Mid{\hskip 2mm\genfrac{}{}{0pt}{1}{\matuladg\matulab}{94}}
\def\Mie{\hskip 2mm\genfrac{}{}{0pt}{1}{\matulaai\matulae}{95}}
\def\Mif{\hskip 2mm\genfrac{}{}{0pt}{1}{\matulac\matulab\matulab\matulab\matulab\matulab}{96}}
\def\Mig{\hskip 2mm\genfrac{}{}{0pt}{1}{\matulaig}{97}}
\def\Mih{\hskip 2mm\genfrac{}{}{0pt}{1}{\matulag\matulag\matulab}{98}}
\def\Mii{\hskip 2mm\genfrac{}{}{0pt}{1}{\matulaaa\matulac\matulac}{99}}
\def\Maoo{\hskip 2mm\genfrac{}{}{0pt}{1}{\matulae\matulae\matulab\matulab}{100}}
\def\Maoa{\hskip 3mm\genfrac{}{}{0pt}{1}{\matulaaoa}{101}}
\def\Maob{\hskip 3mm\genfrac{}{}{0pt}{1}{\matulaag\matulac\matulab}{102}}
\def\Maoc{\hskip 3mm\genfrac{}{}{0pt}{1}{\matulaaoc}{103}}
\def\Maod{\hskip 3mm\genfrac{}{}{0pt}{1}{\matulaac\matulab\matulab\matulab}{104}}
\def\Maoe{\hskip 3mm\genfrac{}{}{0pt}{1}{\matulag\matulae\matulac}{105}}
\def\Maof{\hskip 3mm\genfrac{}{}{0pt}{1}{\matulaec\matulab}{106}}
\def\Maog{\hskip 3mm\genfrac{}{}{0pt}{1}{\matulaaog}{107}}
\def\Maoh{\hskip 3mm\genfrac{}{}{0pt}{1}{\matulac\matulac\matulac\matulab\matulab}{108}}
\def\Maoi{\hskip 3mm\genfrac{}{}{0pt}{1}{\matulaaoi}{109}}
\def\Maao{\hskip 3mm\genfrac{}{}{0pt}{1}{\matulaaa\matulae\matulab}{110}}
\def\Maaa{\hskip 3mm\genfrac{}{}{0pt}{1}{\matulacg\matulac}{111}}
\def\Maab{\hskip 3mm\genfrac{}{}{0pt}{1}{\matulag\matulab\matulab\matulab\matulab}{112}}
\def\Maac{\hskip 3mm\genfrac{}{}{0pt}{1}{\matulaaac}{113} }
\def\Maad{\hskip 3mm\genfrac{}{}{0pt}{1}{\matulaai\matulac\matulab}{114}}
\def\Maae{\hskip 3mm\genfrac{}{}{0pt}{1}{\matulabc\matulae}{115}}
\def\Maaf{\hskip 3mm\genfrac{}{}{0pt}{1}{\matulabi\matulab\matulab}{116}}
\def\Maag{\hskip 3mm\genfrac{}{}{0pt}{1}{\matulaac\matulac\matulac}{117}}

\def\Maah{\hskip 3mm\genfrac{}{}{0pt}{1}{\matulaei\matulab}{118}}
\def\Maai{\hskip 3mm\genfrac{}{}{0pt}{1}{\matulaag\matulag}{119}}
\def\Mabo{\hskip 3mm\genfrac{}{}{0pt}{1}{\matulae\matulac\matulab\matulab\matulab}{120}}
\def\Maba{\hskip 3mm\genfrac{}{}{0pt}{1}{\matulaaa\matulaaa}{121}}
\def\Mabb{\hskip 3mm\genfrac{}{}{0pt}{1}{\matulafa\matulab}{122}}
\def\Mabc{\hskip 3mm\genfrac{}{}{0pt}{1}{\matulada\matulac}{123}}
\def\Mabd{\hskip 3mm\genfrac{}{}{0pt}{1}{\matulaca\matulab\matulab}{124}}
\def\Mabe{\hskip 3mm\genfrac{}{}{0pt}{1}{\matulae\matulae\matulae}{125}}
\def\Mabf{\hskip 3mm\genfrac{}{}{0pt}{1}{\matulag\matulac\matulac\matulab}{126}}
\def\Mabg{\hskip 3mm\genfrac{}{}{0pt}{1}{\matulaabg}{127}}
\def\Mabh{\hskip 3mm\genfrac{}{}{0pt}{1}{\matulab\matulab\matulab\matulab\matulab\matulab\matulab}{128}}
\def\Mabi{\hskip 3mm\genfrac{}{}{0pt}{1}{\matuladc\matulac}{129}}
\def\Maco{\hskip 3mm\genfrac{}{}{0pt}{1}{\matulaac\matulae\matulab}{130}}
\def\Maca{\hskip 3mm\genfrac{}{}{0pt}{1}{\matulaaca}{131}}
\def\Macb{\hskip 3mm\genfrac{}{}{0pt}{1}{\matulaaa\matulac\matulab\matulab}{132}}
\def\Macc{\hskip 3mm\genfrac{}{}{0pt}{1}{\matulaai\matulag}{133}}
\def\Macd{\hskip 3mm\genfrac{}{}{0pt}{1}{\matulafg\matulab}{134}}

\def\Mace{\hskip 3mm\genfrac{}{}{0pt}{1}{\matulae\matulac\matulac\matulac}{135}}
\def\Macf{\hskip 3mm\genfrac{}{}{0pt}{1}{\matulaag\matulab\matulab\matulab}{136}}
\def\Macg{\hskip 3mm\genfrac{}{}{0pt}{1}{\matulaacg}{137}}
\def\Mach{\hskip 3mm\genfrac{}{}{0pt}{1}{\matulabc\matulac\matulab}{138}}
\def\Maci{\hskip 3mm\genfrac{}{}{0pt}{1}{\matulaaci}{139}}
\def\Mado{\hskip 3mm\genfrac{}{}{0pt}{1}{\matulag\matulae\matulab\matulab}{140}}
\def\Mada{\hskip 3mm\genfrac{}{}{0pt}{1}{\matuladg\matulac}{141}}
\def\Madb{\hskip 3mm\genfrac{}{}{0pt}{1}{\matulaga\matulab}{142}}
\def\Madc{\hskip 3mm\genfrac{}{}{0pt}{1}{\matulaac\matulaaa}{143}}
\def\Madd{\hskip 3mm\genfrac{}{}{0pt}{1}{\matulac\matulac\matulab\matulab\matulab\matulab}{144}}
\def\Made{\hskip 3mm\genfrac{}{}{0pt}{1}{\matulabi\matulae}{145}}
\def\Madf{\hskip 3mm\genfrac{}{}{0pt}{1}{\matulagc\matulab}{146}}
\def\Madg{\hskip 3mm\genfrac{}{}{0pt}{1}{\matulag\matulag\matulac}{147}}
\def\Madh{\hskip 3mm\genfrac{}{}{0pt}{1}{\matulacg\matulab\matulab}{148}}
\def\Madi{\hskip 3mm\genfrac{}{}{0pt}{1}{\matulaadi}{149}}
\def\Maeo{\hskip 3mm\genfrac{}{}{0pt}{1}{\matulae\matulae\matulac\matulab}{150}}
\def\Maea{\hskip 3mm\genfrac{}{}{0pt}{1}{\matulaaea}{151}}

\def\Maeb{\hskip 3mm\genfrac{}{}{0pt}{1}{\matulaai\matulab\matulab\matulab}{152}}
\def\Maec{\hskip 3mm\genfrac{}{}{0pt}{1}{\matulaag\matulac\matulac}{153}}
\def\Maed{\hskip 3mm\genfrac{}{}{0pt}{1}{\matulaaa\matulag\matulab}{154}}
\def\Maee{\hskip 3mm\genfrac{}{}{0pt}{1}{\matulaca\matulae}{155}}
\def\Maef{\hskip 3mm\genfrac{}{}{0pt}{1}{\matulaac\matulac\matulab\matulab}{156}}
\def\Maeg{\hskip 3mm\genfrac{}{}{0pt}{1}{\matulaaeg}{157}}
\def\Maeh{\hskip 3mm\genfrac{}{}{0pt}{1}{\matulagi\matulab}{158}}
\def\Maei{\hskip 3mm\genfrac{}{}{0pt}{1}{\matulaec\matulac}{159}}
\def\Mafo{\hskip 3mm\genfrac{}{}{0pt}{1}{\matulae\matulab\matulab\matulab\matulab\matulab}{160}}
\def\Mafa{\hskip 3mm\genfrac{}{}{0pt}{1}{\matulabc\matulag}{161}}
\def\Mafb{\hskip 3mm\genfrac{}{}{0pt}{1}{\matulac\matulac\matulac\matulac\matulab}{162}}
\def\Mafc{\hskip 3mm\genfrac{}{}{0pt}{1}{\matulaafc}{163}}
\def\Mafd{\hskip 3mm\genfrac{}{}{0pt}{1}{\matulada\matulab\matulab}{164}}
\def\Mafe{\hskip 3mm\genfrac{}{}{0pt}{1}{\matulaaa\matulae\matulac}{165}}
\def\Maff{\hskip 3mm\genfrac{}{}{0pt}{1}{\matulahc\matulab}{166}}
\def\Mafg{\hskip 3mm\genfrac{}{}{0pt}{1}{\matulaafg}{167}}
\def\Mafh{\hskip 3mm\genfrac{}{}{0pt}{1}{\matulag\matulac\matulab\matulab\matulab}{168}}

\def\Mafi{\hskip 3mm\genfrac{}{}{0pt}{1}{\matulaac\matulaac}{169}}
\def\Mago{\hskip 3mm\genfrac{}{}{0pt}{1}{\matulaag\matulae\matulab}{170}}
\def\Maga{\hskip 3mm\genfrac{}{}{0pt}{1}{\matulaai\matulac\matulac}{171}}
\def\Magb{\hskip 3mm\genfrac{}{}{0pt}{1}{\matuladc\matulab\matulab}{172}}
\def\Magc{\hskip 3mm\genfrac{}{}{0pt}{1}{\matulaagc}{173}}
\def\Magd{\hskip 3mm\genfrac{}{}{0pt}{1}{\matulabi\matulac\matulab}{174}}
\def\Mage{\hskip 3mm\genfrac{}{}{0pt}{1}{\matulag\matulae\matulae}{175}}
\def\Magf{\hskip 3mm\genfrac{}{}{0pt}{1}{\matulaaa\matulab\matulab\matulab\matulab}{176}}
\def\Magg{\hskip 3mm\genfrac{}{}{0pt}{1}{\matulaei\matulac}{177}}
\def\Magh{\hskip 3mm\genfrac{}{}{0pt}{1}{\matulahi\matulab}{178}}
\def\Magi{\hskip 3mm\genfrac{}{}{0pt}{1}{\matulaagi}{179}}
\def\Maho{\hskip 3mm\genfrac{}{}{0pt}{1}{\matulae\matulac\matulac\matulab\matulab}{180}}
\def\Maha{\hskip 3mm\genfrac{}{}{0pt}{1}{\matulaaha}{181}}
\def\Mahb{\hskip 3mm\genfrac{}{}{0pt}{1}{\matulaac\matulag\matulab}{182}}
\def\Mahc{\hskip 3mm\genfrac{}{}{0pt}{1}{\matulafa\matulac}{183}}
\def\Mahd{\hskip 3mm\genfrac{}{}{0pt}{1}{\matulabc\matulab\matulab\matulab}{184}}

\def\Mahe{\hskip 3mm\genfrac{}{}{0pt}{1}{\matulacg\matulae}{185}}
\def\Mahf{\hskip 3mm\genfrac{}{}{0pt}{1}{\matulaca\matulac\matulab}{186}}
\def\Mahg{\hskip 3mm\genfrac{}{}{0pt}{1}{\matulaag\matulaaa}{187}}
\def\Mahh{\hskip 3mm\genfrac{}{}{0pt}{1}{\matuladg\matulab\matulab}{188}}
\def\Mahi{\hskip 3mm\genfrac{}{}{0pt}{1}{\matulag\matulac\matulac\matulac}{189}}
\def\Maio{\hskip 3mm\genfrac{}{}{0pt}{1}{\matulaai\matulae\matulab}{190}}
\def\Maia{\hskip 3mm\genfrac{}{}{0pt}{1}{\matulaaia}{191}} 
\def\Maib{\hskip 3mm\genfrac{}{}{0pt}{1}{\matulac\matulab\matulab\matulab\matulab\matulab\matulab}{192}}
\def\Maic{\hskip 3mm\genfrac{}{}{0pt}{1}{\matulaaic}{193}} 
\def\Maid{\hskip 3mm\genfrac{}{}{0pt}{1}{\matulaig\matulab}{194}}
\def\Maie{\hskip 3mm\genfrac{}{}{0pt}{1}{\matulaac\matulae\matulac}{195}}
\def\Maif{\hskip 3mm\genfrac{}{}{0pt}{1}{\matulag\matulag\matulab\matulab}{196}}
\def\Maig{\hskip 3mm\genfrac{}{}{0pt}{1}{\matulaaig}{197}} 
\def\Maih{\hskip 3mm\genfrac{}{}{0pt}{1}{\matulaaa\matulac\matulac\matulab}{198}}
\def\Maii{\hskip 3mm\genfrac{}{}{0pt}{1}{\matulaaii}{199}}
\def\Mboo{\hskip 3mm\genfrac{}{}{0pt}{1}{\matulae\matulae\matulab\matulab\matulab}{200}}

\def\Mboa{\hskip 3mm\genfrac{}{}{0pt}{1}{\matulafg\matulac}{201}}
\def\Mbob{\hskip 3mm\genfrac{}{}{0pt}{1}{\matulaaoa\matulab}{202}}
\def\Mboc{\hskip 3mm\genfrac{}{}{0pt}{1}{\matulabi\matulag}{203}}
\def\Mbod{\hskip 3mm\genfrac{}{}{0pt}{1}{\matulaag\matulac\matulab\matulab}{204}}
\def\Mboe{\hskip 3mm\genfrac{}{}{0pt}{1}{\matulada\matulae}{205}}
\def\Mbof{\hskip 3mm\genfrac{}{}{0pt}{1}{\matulaaoc\matulab}{206}}
\def\Mbog{\hskip 3mm\genfrac{}{}{0pt}{1}{\matulabc\matulac\matulac}{207}}
\def\Mboh{\hskip 3mm\genfrac{}{}{0pt}{1}{\matulaac\matulab\matulab\matulab\matulab}{208}}
\def\Mboi{\hskip 3mm\genfrac{}{}{0pt}{1}{\matulaai\matulaaa}{209}}
\def\Mbao{\hskip 3mm\genfrac{}{}{0pt}{1}{\matulag\matulae\matulac\matulab}{210}}
\def\Mbaa{\hskip 3mm\genfrac{}{}{0pt}{1}{\matulabaa}{211}} 
\def\Mbab{\hskip 3mm\genfrac{}{}{0pt}{1}{\matulaec\matulab\matulab}{212}}
\def\Mbac{\hskip 3mm\genfrac{}{}{0pt}{1}{\matulaga\matulac}{213}}
\def\Mbad{\hskip 3mm\genfrac{}{}{0pt}{1}{\matulaaog\matulab}{214}}
\def\Mbae{\hskip 3mm\genfrac{}{}{0pt}{1}{\matuladc\matulae}{215}}

\def\Mbaf{\hskip 3mm\genfrac{}{}{0pt}{1}{\matulac\matulac\matulac\matulab\matulab\matulab}{216}}
\def\Mbag{\hskip 3mm\genfrac{}{}{0pt}{1}{\matulaca\matulag}{217}} 
\def\Mbah{\hskip 3mm\genfrac{}{}{0pt}{1}{\matulaaoi\matulab}{218}}
\def\Mbai{\hskip 3mm\genfrac{}{}{0pt}{1}{\matulagc\matulac}{219}}
\def\Mbbo{\hskip 3mm\genfrac{}{}{0pt}{1}{\matulaaa\matulae\matulab\matulab}{220}}
\def\Mbba{\hskip 3mm\genfrac{}{}{0pt}{1}{\matulaag\matulaac}{221}}
\def\Mbbb{\hskip 3mm\genfrac{}{}{0pt}{1}{\matulacg\matulac\matulab}{222}}
\def\Mbbc{\hskip 3mm\genfrac{}{}{0pt}{1}{\matulabbc}{223}}
\def\Mbbd{\hskip 3mm\genfrac{}{}{0pt}{1}{\matulag\matulab\matulab\matulab\matulab\matulab}{224}}
\def\Mbbe{\hskip 3mm\genfrac{}{}{0pt}{1}{\matulae\matulae\matulac\matulac}{225}}
\def\Mbbf{\hskip 3mm\genfrac{}{}{0pt}{1}{\matulaaac\matulab}{226}}
\def\Mbbg{\hskip 3mm\genfrac{}{}{0pt}{1}{\matulabbg}{227}}
\def\Mbbh{\hskip 3mm\genfrac{}{}{0pt}{1}{\matulaai\matulac\matulab\matulab}{228}}
\def\Mbbi{\hskip 3mm\genfrac{}{}{0pt}{1}{\matulabbi}{229}} 
\def\Mbco{\hskip 3mm\genfrac{}{}{0pt}{1}{\matulabc\matulae\matulab}{230}}

\def\Mbca{\hskip 3mm\genfrac{}{}{0pt}{1}{\matulaaa\matulag\matulac}{231}}
\def\Mbcb{\hskip 3mm\genfrac{}{}{0pt}{1}{\matulabi\matulab\matulab\matulab}{232}}
\def\Mbcc{\hskip 3mm\genfrac{}{}{0pt}{1}{\matulabcc}{233}}
\def\Mbcd{\hskip 3mm\genfrac{}{}{0pt}{1}{\matulaac\matulac\matulac\matulab}{234}}
\def\Mbce{\hskip 3mm\genfrac{}{}{0pt}{1}{\matuladg\matulae}{235}}
\def\Mbcf{\hskip 3mm\genfrac{}{}{0pt}{1}{\matulaei\matulab\matulab}{236}}
\def\Mbcg{\hskip 3mm\genfrac{}{}{0pt}{1}{\matulagi\matulac}{237}}
\def\Mbch{\hskip 3mm\genfrac{}{}{0pt}{1}{\matulaag\matulag\matulab}{238}}
\def\Mbci{\hskip 3mm\genfrac{}{}{0pt}{1}{\matulabci}{239}}
\def\Mbdo{\hskip 3mm\genfrac{}{}{0pt}{1}{\matulae\matulac\matulab\matulab\matulab\matulab}{240}}
\def\Mbda{\hskip 3mm\genfrac{}{}{0pt}{1}{\matulabda}{241}} 
\def\Mbdb{\hskip 3mm\genfrac{}{}{0pt}{1}{\matulaaa\matulaaa\matulab}{242}}
\def\Mbdc{\hskip 3mm\genfrac{}{}{0pt}{1}{\matulac\matulac\matulac\matulac\matulac}{243}}
\def\Mbdd{\hskip 3mm\genfrac{}{}{0pt}{1}{\matulafa\matulab\matulab}{244}}

\def\Mbde{\hskip 3mm\genfrac{}{}{0pt}{1}{\matulag\matulag\matulae}{245}}
\def\Mbdf{\hskip 3mm\genfrac{}{}{0pt}{1}{\matulada\matulac\matulab}{246}}
\def\Mbdg{\hskip 3mm\genfrac{}{}{0pt}{1}{\matulaai\matulaac}{247}}
\def\Mbdh{\hskip 3mm\genfrac{}{}{0pt}{1}{\matulaca\matulab\matulab\matulab}{248}}
\def\Mbdi{\hskip 3mm\genfrac{}{}{0pt}{1}{\matulahc\matulac}{249}}
\def\Mbeo{\hskip 3mm\genfrac{}{}{0pt}{1}{\matulae\matulae\matulae\matulab}{250}}
\def\Mbea{\hskip 3mm\genfrac{}{}{0pt}{1}{\matulabea}{251}} 
\def\Mbeb{\hskip 3mm\genfrac{}{}{0pt}{1}{\matulag\matulac\matulac\matulab\matulab}{252}}
\def\Mbec{\hskip 3mm\genfrac{}{}{0pt}{1}{\matulabc\matulaaa}{253}}
\def\Mbed{\hskip 3mm\genfrac{}{}{0pt}{1}{\matulaabg\matulab}{254}}
\def\Mbee{\hskip 3mm\genfrac{}{}{0pt}{1}{\matulaag\matulae\matulac}{255}}
\def\Mbef{\hskip 3mm\genfrac{}{}{0pt}{1}{\matulab\matulab\matulab\matulab\matulab\matulab\matulab\matulab}{256}}
\def\Mbeg{\hskip 3mm\genfrac{}{}{0pt}{1}{\matulabeg}{257}} 
\def\Mbeh{\hskip 3mm\genfrac{}{}{0pt}{1}{\matuladc\matulac\matulab}{258}}

\def\Mbei{\hskip 3mm\genfrac{}{}{0pt}{1}{\matulacg\matulag}{259}}
\def\Mbfo{\hskip 3mm\genfrac{}{}{0pt}{1}{\matulaac\matulae\matulab\matulab}{260}}
\def\Mbfa{\hskip 3mm\genfrac{}{}{0pt}{1}{\matulabi\matulac\matulac}{261}}
\def\Mbfb{\hskip 3mm\genfrac{}{}{0pt}{1}{\matulaaca\matulab}{262}}
\def\Mbfc{\hskip 3mm\genfrac{}{}{0pt}{1}{\matulabfc}{263}}
\def\Mbfd{\hskip 3mm\genfrac{}{}{0pt}{1}{\matulaaa\matulac\matulab\matulab\matulab}{264}}
\def\Mbfe{\hskip 3mm\genfrac{}{}{0pt}{1}{\matulaec\matulae}{265}}
\def\Mbff{\hskip 3mm\genfrac{}{}{0pt}{1}{\matulaai\matulag\matulab}{266}}
\def\Mbfg{\hskip 3mm\genfrac{}{}{0pt}{1}{\matulahi\matulac}{267}}
\def\Mbfh{\hskip 3mm\genfrac{}{}{0pt}{1}{\matulafg\matulab\matulab}{268}}
\def\Mbfi{\hskip 3mm\genfrac{}{}{0pt}{1}{\matulabfi}{269}} 
\def\Mbgo{\hskip 3mm\genfrac{}{}{0pt}{1}{\matulae\matulac\matulac\matulac\matulab}{270}}
\def\Mbga{\hskip 3mm\genfrac{}{}{0pt}{1}{\matulabga}{271}} 
\def\Mbgb{\hskip 3mm\genfrac{}{}{0pt}{1}{\matulaag\matulab\matulab\matulab\matulab}{272}}

\def\Mbgc{\hskip 3mm\genfrac{}{}{0pt}{1}{\matulaac\matulag\matulac}{273}}
\def\Mbgd{\hskip 3mm\genfrac{}{}{0pt}{1}{\matulaacg\matulab}{274}}
\def\Mbge{\hskip 3mm\genfrac{}{}{0pt}{1}{\matulaaa\matulae\matulae}{275}}
\def\Mbgf{\hskip 3mm\genfrac{}{}{0pt}{1}{\matulabc\matulac\matulab\matulab}{276}}
\def\Mbgg{\hskip 3mm\genfrac{}{}{0pt}{1}{\matulabgg}{277}}
\def\Mbgh{\hskip 3mm\genfrac{}{}{0pt}{1}{\matulaaci\matulab}{278}}
\def\Mbgi{\hskip 3mm\genfrac{}{}{0pt}{1}{\matulaca\matulac\matulac}{279}}
\def\Mbho{\hskip 3mm\genfrac{}{}{0pt}{1}{\matulag\matulae\matulab\matulab\matulab}{280}}
\def\Mbha{\hskip 3mm\genfrac{}{}{0pt}{1}{\matulabha}{281}} 
\def\Mbhb{\hskip 3mm\genfrac{}{}{0pt}{1}{\matuladg\matulac\matulab}{282}}
\def\Mbhc{\hskip 3mm\genfrac{}{}{0pt}{1}{\matulabhc}{283}} 
\def\Mbhd{\hskip 3mm\genfrac{}{}{0pt}{1}{\matulaga\matulab\matulab}{284}}
\def\Mbhe{\hskip 3mm\genfrac{}{}{0pt}{1}{\matulaai\matulae\matulac}{285}}
\def\Mbhf{\hskip 3mm\genfrac{}{}{0pt}{1}{\matulaac\matulaaa\matulab}{286}}

\def\Mbhg{\hskip 3mm\genfrac{}{}{0pt}{1}{\matulada\matulag}{287}}
\def\Mbhh{\hskip 3mm\genfrac{}{}{0pt}{1}{\matulac\matulac\matulab\matulab\matulab\matulab\matulab}{288}}
\def\Mbhi{\hskip 3mm\genfrac{}{}{0pt}{1}{\matulaag\matulaag}{289}}
\def\Mbio{\hskip 3mm\genfrac{}{}{0pt}{1}{\matulabi\matulae\matulab}{290}}
\def\Mbia{\hskip 3mm\genfrac{}{}{0pt}{1}{\matulaig\matulac}{291}}
\def\Mbib{\hskip 3mm\genfrac{}{}{0pt}{1}{\matulagc\matulab\matulab}{292}}
\def\Mbic{\hskip 3mm\genfrac{}{}{0pt}{1}{\matulabic}{293}}
\def\Mbid{\hskip 3mm\genfrac{}{}{0pt}{1}{\matulag\matulag\matulac\matulab}{294}}
\def\Mbie{\hskip 3mm\genfrac{}{}{0pt}{1}{\matulaei\matulae}{295}}
\def\Mbif{\hskip 3mm\genfrac{}{}{0pt}{1}{\matulacg\matulab\matulab\matulab}{296}}
\def\Mbig{\hskip 3mm\genfrac{}{}{0pt}{1}{\matulaaa\matulac\matulac\matulac}{297}}
\def\Mbih{\hskip 3mm\genfrac{}{}{0pt}{1}{\matulaadi\matulab}{298}}
\def\Mbii{\hskip 3mm\genfrac{}{}{0pt}{1}{\matulabc\matulaac}{299}}
\def\Mcoo{\hskip 3mm\genfrac{}{}{0pt}{1}{\matulae\matulae\matulac\matulab\matulab}{300}}

\def\Mcoa{\hskip 3mm\genfrac{}{}{0pt}{1}{\matuladc\matulag}{301}}
\def\Mcob{\hskip 3mm\genfrac{}{}{0pt}{1}{\matulaaea\matulab}{302}}
\def\Mcoc{\hskip 3mm\genfrac{}{}{0pt}{1}{\matulaaoa\matulac}{303}}
\def\Mcod{\hskip 3mm\genfrac{}{}{0pt}{1}{\matulaai\matulab\matulab\matulab\matulab}{304}}
\def\Mcoe{\hskip 3mm\genfrac{}{}{0pt}{1}{\matulafa\matulae}{305}}
\def\Mcof{\hskip 3mm\genfrac{}{}{0pt}{1}{\matulaag\matulac\matulac\matulab}{306}}
\def\Mcog{\hskip 3mm\genfrac{}{}{0pt}{1}{\matulacog}{307}}
\def\Mcoh{\hskip 3mm\genfrac{}{}{0pt}{1}{\matulaaa\matulag\matulab\matulab}{308}}
\def\Mcoi{\hskip 3mm\genfrac{}{}{0pt}{1}{\matulaaoc\matulac}{309}}
\def\Mcao{\hskip 3mm\genfrac{}{}{0pt}{1}{\matulaca\matulae\matulab}{310}}
\def\Mcaa{\hskip 3mm\genfrac{}{}{0pt}{1}{\matulacaa}{311}}
\def\Mcab{\hskip 3mm\genfrac{}{}{0pt}{1}{\matulaac\matulac\matulab\matulab\matulab}{312}}
\def\Mcac{\hskip 3mm\genfrac{}{}{0pt}{1}{\matulacac}{313}} 
\def\Mcad{\hskip 3mm\genfrac{}{}{0pt}{1}{\matulaaeg\matulab}{314}}
\def\Mcae{\hskip 3mm\genfrac{}{}{0pt}{1}{\matulag\matulae\matulac\matulac}{315}}

\def\Mcaf{\hskip 3mm\genfrac{}{}{0pt}{1}{\matulagi\matulab\matulab}{316}}
\def\Mcag{\hskip 3mm\genfrac{}{}{0pt}{1}{\matulacag}{317}}
\def\Mcah{\hskip 3mm\genfrac{}{}{0pt}{1}{\matulaec\matulac\matulab}{318}}
\def\Mcai{\hskip 3mm\genfrac{}{}{0pt}{1}{\matulabi\matulaaa}{319}}
\def\Mcbo{\hskip 3mm\genfrac{}{}{0pt}{1}{\matulae\matulab\matulab\matulab\matulab\matulab\matulab}{320}}
\def\Mcba{\hskip 3mm\genfrac{}{}{0pt}{1}{\matulaaog\matulac}{321}}
\def\Mcbb{\hskip 3mm\genfrac{}{}{0pt}{1}{\matulabc\matulag\matulab}{322}}
\def\Mcbc{\hskip 3mm\genfrac{}{}{0pt}{1}{\matulaai\matulaag}{323}}
\def\Mcbd{\hskip 3mm\genfrac{}{}{0pt}{1}{\matulac\matulac\matulac\matulac\matulab\matulab}{324}}
\def\Mcbe{\hskip 3mm\genfrac{}{}{0pt}{1}{\matulaac\matulae\matulae}{325}}
\def\Mcbf{\hskip 3mm\genfrac{}{}{0pt}{1}{\matulaafc\matulab}{326}}
\def\Mcbg{\hskip 3mm\genfrac{}{}{0pt}{1}{\matulaaoi\matulac}{327}}
\def\Mcbh{\hskip 3mm\genfrac{}{}{0pt}{1}{\matulada\matulab\matulab\matulab}{328}}
\def\Mcbi{\hskip 3mm\genfrac{}{}{0pt}{1}{\matuladg\matulag}{329}}
\def\Mcco{\hskip 3mm\genfrac{}{}{0pt}{1}{\matulaaa\matulae\matulac\matulab}{330}}

\def\Mcca{\hskip 3mm\genfrac{}{}{0pt}{1}{\matulacca}{331}}
\def\Mccb{\hskip 3mm\genfrac{}{}{0pt}{1}{\matulahc\matulab\matulab}{332}}
\def\Mccc{\hskip 3mm\genfrac{}{}{0pt}{1}{\matulacg\matulac\matulac}{333}}
\def\Mccd{\hskip 3mm\genfrac{}{}{0pt}{1}{\matulaafg\matulab}{334}}
\def\Mcce{\hskip 3mm\genfrac{}{}{0pt}{1}{\matulafg\matulae}{335}}
\def\Mccf{\hskip 3mm\genfrac{}{}{0pt}{1}{\matulag\matulac\matulab\matulab\matulab\matulab}{336}}
\def\Mccg{\hskip 3mm\genfrac{}{}{0pt}{1}{\matulaccg}{337}} 
\def\Mcch{\hskip 3mm\genfrac{}{}{0pt}{1}{\matulaac\matulaac\matulab}{338}}
\def\Mcci{\hskip 3mm\genfrac{}{}{0pt}{1}{\matulaaac\matulac}{339}}
\def\Mcdo{\hskip 3mm\genfrac{}{}{0pt}{1}{\matulaag\matulae\matulab\matulab}{340}}
\def\Mcda{\hskip 3mm\genfrac{}{}{0pt}{1}{\matulaca\matulaaa}{341}}
\def\Mcdb{\hskip 3mm\genfrac{}{}{0pt}{1}{\matulaai\matulac\matulac\matulab}{342}}
\def\Mcdc{\hskip 3mm\genfrac{}{}{0pt}{1}{\matulag\matulag\matulag}{343}}
\def\Mcdd{\hskip 3mm\genfrac{}{}{0pt}{1}{\matuladc\matulab\matulab\matulab}{344}}

\def\Mcde{\hskip 3mm\genfrac{}{}{0pt}{1}{\matulabc\matulae\matulac}{345}}
\def\Mcdf{\hskip 3mm\genfrac{}{}{0pt}{1}{\matulaagc\matulab}{346}}
\def\Mcdg{\hskip 3mm\genfrac{}{}{0pt}{1}{\matulacdg}{347}} 
\def\Mcdh{\hskip 3mm\genfrac{}{}{0pt}{1}{\matulabi\matulac\matulab\matulab}{348}}
\def\Mcdi{\hskip 3mm\genfrac{}{}{0pt}{1}{\matulacdi}{349}} 
\def\Mceo{\hskip 3mm\genfrac{}{}{0pt}{1}{\matulag\matulae\matulae\matulab}{350}}
\def\Mcea{\hskip 3mm\genfrac{}{}{0pt}{1}{\matulaac\matulac\matulac\matulac}{351}}
\def\Mceb{\hskip 3mm\genfrac{}{}{0pt}{1}{\matulaaa\matulab\matulab\matulab\matulab\matulab}{352}}
\def\Mcec{\hskip 3mm\genfrac{}{}{0pt}{1}{\matulacec}{353}}
\def\Mced{\hskip 3mm\genfrac{}{}{0pt}{1}{\matulaei\matulac\matulab}{354}}
\def\Mcee{\hskip 3mm\genfrac{}{}{0pt}{1}{\matulaga\matulae}{355}}
\def\Mcef{\hskip 3mm\genfrac{}{}{0pt}{1}{\matulahi\matulab\matulab}{356}}
\def\Mceg{\hskip 3mm\genfrac{}{}{0pt}{1}{\matulaag\matulag\matulac}{357}}
\def\Mceh{\hskip 3mm\genfrac{}{}{0pt}{1}{\matulaagi\matulab}{358}}

\def\Mcei{\hskip 3mm\genfrac{}{}{0pt}{1}{\matulacei}{359}}
\def\Mcfo{\hskip 3mm\genfrac{}{}{0pt}{1}{\matulae\matulac\matulac\matulab\matulab\matulab}{360}}
\def\Mcfa{\hskip 3mm\genfrac{}{}{0pt}{1}{\matulaai\matulaai}{361}}
\def\Mcfb{\hskip 3mm\genfrac{}{}{0pt}{1}{\matulaaha\matulab}{362}}
\def\Mcfc{\hskip 3mm\genfrac{}{}{0pt}{1}{\matulaaa\matulaaa\matulac}{363}}
\def\Mcfd{\hskip 3mm\genfrac{}{}{0pt}{1}{\matulaac\matulag\matulab\matulab}{364}}
\def\Mcfe{\hskip 3mm\genfrac{}{}{0pt}{1}{\matulagc\matulae}{365}}
\def\Mcff{\hskip 3mm\genfrac{}{}{0pt}{1}{\matulafa\matulac\matulab}{366}}
\def\Mcfg{\hskip 3mm\genfrac{}{}{0pt}{1}{\matulacfg}{367}}
\def\Mcfh{\hskip 3mm\genfrac{}{}{0pt}{1}{\matulabc\matulab\matulab\matulab\matulab}{368}}
\def\Mcfi{\hskip 3mm\genfrac{}{}{0pt}{1}{\matulada\matulac\matulac}{369}}
\def\Mcgo{\hskip 3mm\genfrac{}{}{0pt}{1}{\matulacg\matulae\matulab}{370}}
\def\Mcga{\hskip 3mm\genfrac{}{}{0pt}{1}{\matulaec\matulag}{371}}
\def\Mcgb{\hskip 3mm\genfrac{}{}{0pt}{1}{\matulaca\matulac\matulab\matulab}{372}}

\def\Mcgc{\hskip 3mm\genfrac{}{}{0pt}{1}{\matulacgc}{373}}
\def\Mcgd{\hskip 3mm\genfrac{}{}{0pt}{1}{\matulaag\matulaaa\matulab}{374}}
\def\Mcge{\hskip 3mm\genfrac{}{}{0pt}{1}{\matulae\matulae\matulae\matulac}{375}}
\def\Mcgf{\hskip 3mm\genfrac{}{}{0pt}{1}{\matuladg\matulab\matulab\matulab}{376}}
\def\Mcgg{\hskip 3mm\genfrac{}{}{0pt}{1}{\matulabi\matulaac}{377}}
\def\Mcgh{\hskip 3mm\genfrac{}{}{0pt}{1}{\matulag\matulac\matulac\matulac\matulab}{378}}
\def\Mcgi{\hskip 3mm\genfrac{}{}{0pt}{1}{\matulacgi}{379}}
\def\Mcho{\hskip 3mm\genfrac{}{}{0pt}{1}{\matulaai\matulae\matulab\matulab}{380}}
\def\Mcha{\hskip 3mm\genfrac{}{}{0pt}{1}{\matulaabg\matulac}{381}}
\def\Mchb{\hskip 3mm\genfrac{}{}{0pt}{1}{\matulaaia\matulab}{382}}
\def\Mchc{\hskip 3mm\genfrac{}{}{0pt}{1}{\matulachc}{383}}
\def\Mchd{\hskip 3mm\genfrac{}{}{0pt}{1}{\matulac\matulab\matulab\matulab\matulab\matulab\matulab\matulab}{384}}
\def\Mche{\hskip 3mm\genfrac{}{}{0pt}{1}{\matulaaa\matulag\matulae}{385}}
\def\Mchf{\hskip 3mm\genfrac{}{}{0pt}{1}{\matulaaic\matulab}{386}}

\def\Mchg{\hskip 3mm\genfrac{}{}{0pt}{1}{\matuladc\matulac\matulac}{387}}
\def\Mchh{\hskip 3mm\genfrac{}{}{0pt}{1}{\matulaig\matulab\matulab}{388}}
\def\Mchi{\hskip 3mm\genfrac{}{}{0pt}{1}{\matulachi}{389}}
\def\Mcio{\hskip 3mm\genfrac{}{}{0pt}{1}{\matulaac\matulae\matulac\matulab}{390}}
\def\Mcia{\hskip 3mm\genfrac{}{}{0pt}{1}{\matulabc\matulaag}{391}}
\def\Mcib{\hskip 3mm\genfrac{}{}{0pt}{1}{\matulag\matulag\matulab\matulab\matulab}{392}}
\def\Mcic{\hskip 3mm\genfrac{}{}{0pt}{1}{\matulaaca\matulac}{393}}
\def\Mcid{\hskip 3mm\genfrac{}{}{0pt}{1}{\matulaaig\matulab}{394}}
\def\Mcie{\hskip 3mm\genfrac{}{}{0pt}{1}{\matulagi\matulae}{395}}
\def\Mcif{\hskip 3mm\genfrac{}{}{0pt}{1}{\matulaaa\matulac\matulac\matulab\matulab}{396}}
\def\Mcig{\hskip 3mm\genfrac{}{}{0pt}{1}{\matulacig}{397}}
\def\Mcih{\hskip 3mm\genfrac{}{}{0pt}{1}{\matulaaii\matulab}{398}}
\def\Mcii{\hskip 3mm\genfrac{}{}{0pt}{1}{\matulaai\matulag\matulac}{399}}
\def\Mdoo{\hskip 3mm\genfrac{}{}{0pt}{1}{\matulae\matulae\matulab\matulab\matulab\matulab}{400}}

\def\Mdoa{\hskip 3mm\genfrac{}{}{0pt}{1}{\matuladoa}{401}} 
\def\Mdob{\hskip 3mm\genfrac{}{}{0pt}{1}{\matulafg\matulac\matulab}{402}}
\def\Mdoc{\hskip 3mm\genfrac{}{}{0pt}{1}{\matulaca\matulaac}{403}}
\def\Mdod{\hskip 3mm\genfrac{}{}{0pt}{1}{\matulaaoa\matulab\matulab}{404}}
\def\Mdoe{\hskip 3mm\genfrac{}{}{0pt}{1}{\matulae\matulac\matulac\matulac\matulac}{405}}
\def\Mdof{\hskip 3mm\genfrac{}{}{0pt}{1}{\matulabi\matulag\matulab}{406}}
\def\Mdog{\hskip 3mm\genfrac{}{}{0pt}{1}{\matulacg\matulaaa}{407}}
\def\Mdoh{\hskip 3mm\genfrac{}{}{0pt}{1}{\matulaag\matulac\matulab\matulab\matulab}{408}}
\def\Mdoi{\hskip 3mm\genfrac{}{}{0pt}{1}{\matuladoi}{409}} 
\def\Mdao{\hskip 3mm\genfrac{}{}{0pt}{1}{\matulada\matulae\matulab}{410}}
\def\Mdaa{\hskip 3mm\genfrac{}{}{0pt}{1}{\matulaacg\matulac}{411}}
\def\Mdab{\hskip 3mm\genfrac{}{}{0pt}{1}{\matulaaoc\matulab\matulab}{412}}
\def\Mdac{\hskip 3mm\genfrac{}{}{0pt}{1}{\matulaei\matulag}{413}}
\def\Mdad{\hskip 3mm\genfrac{}{}{0pt}{1}{\matulabc\matulac\matulac\matulab}{414}}
\def\Mdae{\hskip 3mm\genfrac{}{}{0pt}{1}{\matulahc\matulae}{415}}

\def\Mdaf{\hskip 3mm\genfrac{}{}{0pt}{1}{\matulaac\matulab\matulab\matulab\matulab\matulab}{416}}
\def\Mdag{\hskip 3mm\genfrac{}{}{0pt}{1}{\matulaaci\matulac}{417}}
\def\Mdah{\hskip 3mm\genfrac{}{}{0pt}{1}{\matulaai\matulaaa\matulab}{418}}
\def\Mdai{\hskip 3mm\genfrac{}{}{0pt}{1}{\matuladai}{419}}
\def\Mdbo{\hskip 3mm\genfrac{}{}{0pt}{1}{\matulag\matulae\matulac\matulab\matulab}{420}}
\def\Mdba{\hskip 3mm\genfrac{}{}{0pt}{1}{\matuladba}{421}} 
\def\Mdbb{\hskip 3mm\genfrac{}{}{0pt}{1}{\matulabaa\matulab}{422}}
\def\Mdbc{\hskip 3mm\genfrac{}{}{0pt}{1}{\matuladg\matulac\matulac}{423}}
\def\Mdbd{\hskip 3mm\genfrac{}{}{0pt}{1}{\matulaec\matulab\matulab\matulab}{424}}
\def\Mdbe{\hskip 3mm\genfrac{}{}{0pt}{1}{\matulaag\matulae\matulae}{425}}
\def\Mdbf{\hskip 3mm\genfrac{}{}{0pt}{1}{\matulaga\matulac\matulab}{426}}
\def\Mdbg{\hskip 3mm\genfrac{}{}{0pt}{1}{\matulafa\matulag}{427}}
\def\Mdbh{\hskip 3mm\genfrac{}{}{0pt}{1}{\matulaaog\matulab\matulab}{428}}
\def\Mdbi{\hskip 3mm\genfrac{}{}{0pt}{1}{\matulaac\matulaaa\matulac}{429}}
\def\Mdco{\hskip 3mm\genfrac{}{}{0pt}{1}{\matuladc\matulae\matulab}{430}}

\def\Mdca{\hskip 3mm\genfrac{}{}{0pt}{1}{\matuladca}{431}}
\def\Mdcb{\hskip 3mm\genfrac{}{}{0pt}{1}{\matulac\matulac\matulac\matulab\matulab\matulab\matulab}{432}}
\def\Mdcc{\hskip 3mm\genfrac{}{}{0pt}{1}{\matuladcc}{433}}
\def\Mdcd{\hskip 3mm\genfrac{}{}{0pt}{1}{\matulaca\matulag\matulab}{434}}
\def\Mdce{\hskip 3mm\genfrac{}{}{0pt}{1}{\matulabi\matulae\matulac}{435}}
\def\Mdcf{\hskip 3mm\genfrac{}{}{0pt}{1}{\matulaaoi\matulab\matulab}{436}}
\def\Mdcg{\hskip 3mm\genfrac{}{}{0pt}{1}{\matulabc\matulaai}{437}}
\def\Mdch{\hskip 3mm\genfrac{}{}{0pt}{1}{\matulagc\matulac\matulab}{438}}
\def\Mdci{\hskip 3mm\genfrac{}{}{0pt}{1}{\matuladci}{439}}
\def\Mddo{\hskip 3mm\genfrac{}{}{0pt}{1}{\matulaaa\matulae\matulab\matulab\matulab}{440}}
\def\Mdda{\hskip 3mm\genfrac{}{}{0pt}{1}{\matulag\matulag\matulac\matulac}{441}}
\def\Mddb{\hskip 3mm\genfrac{}{}{0pt}{1}{\matulaag\matulaac\matulab}{442}}
\def\Mddc{\hskip 3mm\genfrac{}{}{0pt}{1}{\matuladdc}{443}} 
\def\Mddd{\hskip 3mm\genfrac{}{}{0pt}{1}{\matulacg\matulac\matulab\matulab}{444}}

\def\Mdde{\hskip 3mm\genfrac{}{}{0pt}{1}{\matulahi\matulae}{445}}
\def\Mddf{\hskip 3mm\genfrac{}{}{0pt}{1}{\matulabbc\matulab}{446}}
\def\Mddg{\hskip 3mm\genfrac{}{}{0pt}{1}{\matulaadi\matulac}{447}}
\def\Mddh{\hskip 3mm\genfrac{}{}{0pt}{1}{\matulag\matulab\matulab\matulab\matulab\matulab\matulab}{448}}
\def\Mddi{\hskip 3mm\genfrac{}{}{0pt}{1}{\matuladdi}{449}}
\def\Mdeo{\hskip 3mm\genfrac{}{}{0pt}{1}{\matulae\matulae\matulac\matulac\matulab}{450}}
\def\Mdea{\hskip 3mm\genfrac{}{}{0pt}{1}{\matulada\matulaaa}{451}}
\def\Mdeb{\hskip 3mm\genfrac{}{}{0pt}{1}{\matulaaac\matulab\matulab}{452}}
\def\Mdec{\hskip 3mm\genfrac{}{}{0pt}{1}{\matulaaea\matulac}{453}}
\def\Mded{\hskip 3mm\genfrac{}{}{0pt}{1}{\matulabbg\matulab}{454}}
\def\Mdee{\hskip 3mm\genfrac{}{}{0pt}{1}{\matulaac\matulag\matulae}{455}}
\def\Mdef{\hskip 3mm\genfrac{}{}{0pt}{1}{\matulaai\matulac\matulab\matulab\matulab}{456}}
\def\Mdeg{\hskip 3mm\genfrac{}{}{0pt}{1}{\matuladeg}{457}}
\def\Mdeh{\hskip 3mm\genfrac{}{}{0pt}{1}{\matulabbi\matulab}{458}}

\def\Mdei{\hskip 3mm\genfrac{}{}{0pt}{1}{\matulaag\matulac\matulac\matulac}{459}}
\def\Mdfo{\hskip 3mm\genfrac{}{}{0pt}{1}{\matulabc\matulae\matulab\matulab}{460}}
\def\Mdfa{\hskip 3mm\genfrac{}{}{0pt}{1}{\matuladfa}{461}}
\def\Mdfb{\hskip 3mm\genfrac{}{}{0pt}{1}{\matulaaa\matulag\matulac\matulab}{462}}
\def\Mdfc{\hskip 3mm\genfrac{}{}{0pt}{1}{\matuladfc}{463}}
\def\Mdfd{\hskip 3mm\genfrac{}{}{0pt}{1}{\matulabi\matulab\matulab\matulab\matulab}{464}}
\def\Mdfe{\hskip 3mm\genfrac{}{}{0pt}{1}{\matulaca\matulae\matulac}{465}}
\def\Mdff{\hskip 3mm\genfrac{}{}{0pt}{1}{\matulabcc\matulab}{466}}
\def\Mdfg{\hskip 3mm\genfrac{}{}{0pt}{1}{\matuladfg}{467}}
\def\Mdfh{\hskip 3mm\genfrac{}{}{0pt}{1}{\matulaac\matulac\matulac\matulab\matulab}{468}}
\def\Mdfi{\hskip 3mm\genfrac{}{}{0pt}{1}{\matulafg\matulag}{469}}
\def\Mdgo{\hskip 3mm\genfrac{}{}{0pt}{1}{\matuladg\matulae\matulab}{470}}
\def\Mdga{\hskip 3mm\genfrac{}{}{0pt}{1}{\matulaaeg\matulac}{471}}
\def\Mdgb{\hskip 3mm\genfrac{}{}{0pt}{1}{\matulaei\matulab\matulab\matulab}{472}}

\def\Mdgc{\hskip 3mm\genfrac{}{}{0pt}{1}{\matuladc\matulaaa}{473}}
\def\Mdgd{\hskip 3mm\genfrac{}{}{0pt}{1}{\matulagi\matulac\matulab}{474}}
\def\Mdge{\hskip 3mm\genfrac{}{}{0pt}{1}{\matulaai\matulae\matulae}{475}}
\def\Mdgf{\hskip 3mm\genfrac{}{}{0pt}{1}{\matulaag\matulag\matulab\matulab}{476}}
\def\Mdgg{\hskip 3mm\genfrac{}{}{0pt}{1}{\matulaec\matulac\matulac}{477}}
\def\Mdgh{\hskip 3mm\genfrac{}{}{0pt}{1}{\matulabci\matulab}{478}}
\def\Mdgi{\hskip 3mm\genfrac{}{}{0pt}{1}{\matuladgi}{479}}
\def\Mdho{\hskip 3mm\genfrac{}{}{0pt}{1}{\matulae\matulac\matulab\matulab\matulab\matulab\matulab}{480}}
\def\Mdha{\hskip 3mm\genfrac{}{}{0pt}{1}{\matulacg\matulaac}{481}}
\def\Mdhb{\hskip 3mm\genfrac{}{}{0pt}{1}{\matulabda\matulab}{482}}
\def\Mdhc{\hskip 3mm\genfrac{}{}{0pt}{1}{\matulabc\matulag\matulac}{483}}
\def\Mdhd{\hskip 3mm\genfrac{}{}{0pt}{1}{\matulaaa\matulaaa\matulab\matulab}{484}}
\def\Mdhe{\hskip 3mm\genfrac{}{}{0pt}{1}{\matulaig\matulae}{485}}
\def\Mdhf{\hskip 3mm\genfrac{}{}{0pt}{1}{\matulac\matulac\matulac\matulac\matulac\matulab}{486}}

\def\Mdhg{\hskip 3mm\genfrac{}{}{0pt}{1}{\matuladhg}{487}}
\def\Mdhh{\hskip 3mm\genfrac{}{}{0pt}{1}{\matulafa\matulab\matulab\matulab}{488}}
\def\Mdhi{\hskip 3mm\genfrac{}{}{0pt}{1}{\matulaafc\matulac}{489}}
\def\Mdio{\hskip 3mm\genfrac{}{}{0pt}{1}{\matulag\matulag\matulae\matulab}{490}}
\def\Mdia{\hskip 3mm\genfrac{}{}{0pt}{1}{\matuladia}{491}}
\def\Mdib{\hskip 3mm\genfrac{}{}{0pt}{1}{\matulada\matulac\matulab\matulab}{492}}
\def\Mdic{\hskip 3mm\genfrac{}{}{0pt}{1}{\matulabi\matulaag}{493}}
\def\Mdid{\hskip 3mm\genfrac{}{}{0pt}{1}{\matulaai\matulaac\matulab}{494}}
\def\Mdie{\hskip 3mm\genfrac{}{}{0pt}{1}{\matulaaa\matulae\matulac\matulac}{495}}
\def\Mdif{\hskip 3mm\genfrac{}{}{0pt}{1}{\matulaca\matulab\matulab\matulab\matulab}{496}}
\def\Mdig{\hskip 3mm\genfrac{}{}{0pt}{1}{\matulaga\matulag}{497}}
\def\Mdih{\hskip 3mm\genfrac{}{}{0pt}{1}{\matulahc\matulac\matulab}{498}}
\def\Mdii{\hskip 3mm\genfrac{}{}{0pt}{1}{\matuladii}{499}}
\def\Meoo{\hskip 3mm\genfrac{}{}{0pt}{1}{\matulae\matulae\matulae\matulab\matulab}{500}}

\def\Meoa{\hskip 3mm\genfrac{}{}{0pt}{1}{\matulaafg\matulac}{501}}
\def\Meob{\hskip 3mm\genfrac{}{}{0pt}{1}{\matulabea\matulab}{502}}
\def\Meoc{\hskip 3mm\genfrac{}{}{0pt}{1}{\matulaeoc}{503}}
\def\Meod{\hskip 3mm\genfrac{}{}{0pt}{1}{\matulag\matulac\matulac\matulab\matulab\matulab}{504}}
\def\Meoe{\hskip 3mm\genfrac{}{}{0pt}{1}{\matulaaoa\matulae}{505}}
\def\Meof{\hskip 3mm\genfrac{}{}{0pt}{1}{\matulabc\matulaaa\matulab}{506}}
\def\Meog{\hskip 3mm\genfrac{}{}{0pt}{1}{\matulaac\matulaac\matulac}{507}}
\def\Meoh{\hskip 3mm\genfrac{}{}{0pt}{1}{\matulaabg\matulab\matulab}{508}}
\def\Meoi{\hskip 3mm\genfrac{}{}{0pt}{1}{\matulaeoi}{509}}
\def\Meao{\hskip 3mm\genfrac{}{}{0pt}{1}{\matulaag\matulae\matulac\matulab}{510}}
\def\Meaa{\hskip 3mm\genfrac{}{}{0pt}{1}{\matulagc\matulag}{511}}
\def\Meab{\hskip 3mm\genfrac{}{}{0pt}{1}
{\matulab\matulab\matulab\matulab\matulab\matulab\matulab\matulab\matulab}{512}}
\def\Meac{\hskip 3mm\genfrac{}{}{0pt}{1}{\matulaai\matulac\matulac\matulac}{513}}
\def\Mead{\hskip 3mm\genfrac{}{}{0pt}{1}{\matulabeg\matulab}{514}}
\def\Meae{\hskip 3mm\genfrac{}{}{0pt}{1}{\matulaaoc\matulae}{515}}

\def\Meaf{\hskip 3mm\genfrac{}{}{0pt}{1}{\matuladc\matulac\matulab\matulab}{516}}
\def\Meag{\hskip 3mm\genfrac{}{}{0pt}{1}{\matuladg\matulaaa}{517}}
\def\Meah{\hskip 3mm\genfrac{}{}{0pt}{1}{\matulacg\matulag\matulab}{518}}
\def\Meai{\hskip 3mm\genfrac{}{}{0pt}{1}{\matulaagc\matulac}{519}}
\def\Mebo{\hskip 3mm\genfrac{}{}{0pt}{1}{\matulaac\matulae\matulab\matulab\matulab}{520}}
\def\Meba{\hskip 3mm\genfrac{}{}{0pt}{1}{\matulaeba}{521}}
\def\Mebb{\hskip 3mm\genfrac{}{}{0pt}{1}{\matulabi\matulac\matulac\matulab}{522}}
\def\Mebc{\hskip 3mm\genfrac{}{}{0pt}{1}{\matulaebc}{523}}
\def\Mebd{\hskip 3mm\genfrac{}{}{0pt}{1}{\matulaaca\matulab\matulab}{524}}
\def\Mebe{\hskip 3mm\genfrac{}{}{0pt}{1}{\matulag\matulae\matulae\matulac}{525}}
\def\Mebf{\hskip 3mm\genfrac{}{}{0pt}{1}{\matulabfc\matulab}{526}}
\def\Mebg{\hskip 3mm\genfrac{}{}{0pt}{1}{\matulaca\matulaag}{527}}
\def\Mebh{\hskip 3mm\genfrac{}{}{0pt}{1}{\matulaaa\matulac\matulab\matulab\matulab\matulab}{528}}
\def\Mebi{\hskip 3mm\genfrac{}{}{0pt}{1}{\matulabc\matulabc}{529}}
\def\Meco{\hskip 3mm\genfrac{}{}{0pt}{1}{\matulaec\matulae\matulab}{530}}

\def\Meca{\hskip 3mm\genfrac{}{}{0pt}{1}{\matulaei\matulac\matulac}{531}}
\def\Mecb{\hskip 3mm\genfrac{}{}{0pt}{1}{\matulaai\matulag\matulab\matulab}{532}}
\def\Mecc{\hskip 3mm\genfrac{}{}{0pt}{1}{\matulada\matulaac}{533}}
\def\Mecd{\hskip 3mm\genfrac{}{}{0pt}{1}{\matulahi\matulac\matulab}{534}}
\def\Mece{\hskip 3mm\genfrac{}{}{0pt}{1}{\matulaaog\matulae}{535}}
\def\Mecf{\hskip 3mm\genfrac{}{}{0pt}{1}{\matulafg\matulab\matulab\matulab}{536}}
\def\Mecg{\hskip 3mm\genfrac{}{}{0pt}{1}{\matulaagi\matulac}{537}}
\def\Mech{\hskip 3mm\genfrac{}{}{0pt}{1}{\matulabfi\matulab}{538}} 
\def\Meci{\hskip 3mm\genfrac{}{}{0pt}{1}{\matulaaa\matulag\matulag}{539}}
\def\Medo{\hskip 3mm\genfrac{}{}{0pt}{1}{\matulae\matulac\matulac\matulac\matulab\matulab}{540}}
\def\Meda{\hskip 3mm\genfrac{}{}{0pt}{1}{\matulaeda}{541}}
\def\Medb{\hskip 3mm\genfrac{}{}{0pt}{1}{\matulabga\matulab}{542}} 
\def\Medc{\hskip 3mm\genfrac{}{}{0pt}{1}{\matulaaha\matulac}{543}}
\def\Medd{\hskip 3mm\genfrac{}{}{0pt}{1}{\matulaag\matulab\matulab\matulab\matulab\matulab}{544}}

\def\Mede{\hskip 3mm\genfrac{}{}{0pt}{1}{\matulaaoi\matulae}{545}}
\def\Medf{\hskip 3mm\genfrac{}{}{0pt}{1}{\matulaac\matulag\matulac\matulab}{546}}
\def\Medg{\hskip 3mm\genfrac{}{}{0pt}{1}{\matulaedg}{547}}
\def\Medh{\hskip 3mm\genfrac{}{}{0pt}{1}{\matulaacg\matulab\matulab}{548}}
\def\Medi{\hskip 3mm\genfrac{}{}{0pt}{1}{\matulafa\matulac\matulac}{549}}
\def\Meeo{\hskip 3mm\genfrac{}{}{0pt}{1}{\matulaaa\matulae\matulae\matulab}{550}}
\def\Meea{\hskip 3mm\genfrac{}{}{0pt}{1}{\matulabi\matulaai}{551}}
\def\Meeb{\hskip 3mm\genfrac{}{}{0pt}{1}{\matulabc\matulac\matulab\matulab\matulab}{552}}
\def\Meec{\hskip 3mm\genfrac{}{}{0pt}{1}{\matulagi\matulag}{553}}
\def\Meed{\hskip 3mm\genfrac{}{}{0pt}{1}{\matulabgg\matulab}{554}}
\def\Meee{\hskip 3mm\genfrac{}{}{0pt}{1}{\matulacg\matulae\matulac}{555}}
\def\Meef{\hskip 3mm\genfrac{}{}{0pt}{1}{\matulaaci\matulab\matulab}{556}}
\def\Meeg{\hskip 3mm\genfrac{}{}{0pt}{1}{\matulaeeg}{557}}
\def\Meeh{\hskip 3mm\genfrac{}{}{0pt}{1}{\matulaca\matulac\matulac\matulab}{558}}

\def\Meei{\hskip 3mm\genfrac{}{}{0pt}{1}{\matuladc\matulaac}{559}}
\def\Mefo{\hskip 3mm\genfrac{}{}{0pt}{1}{\matulag\matulae\matulab\matulab\matulab\matulab}{560}}
\def\Mefa{\hskip 3mm\genfrac{}{}{0pt}{1}{\matulaag\matulaaa\matulac}{561}}
\def\Mefb{\hskip 3mm\genfrac{}{}{0pt}{1}{\matulabha\matulab}{562}} 
\def\Mefc{\hskip 3mm\genfrac{}{}{0pt}{1}{\matulaefc}{563}}
\def\Mefd{\hskip 3mm\genfrac{}{}{0pt}{1}{\matuladg\matulac\matulab\matulab}{564}}
\def\Mefe{\hskip 3mm\genfrac{}{}{0pt}{1}{\matulaaac\matulae}{565}}
\def\Meff{\hskip 3mm\genfrac{}{}{0pt}{1}{\matulabhc\matulab}{566}}
\def\Mefg{\hskip 3mm\genfrac{}{}{0pt}{1}{\matulag\matulac\matulac\matulac\matulac}{567}}
\def\Mefh{\hskip 3mm\genfrac{}{}{0pt}{1}{\matulaga\matulab\matulab\matulab}{568}}
\def\Mefi{\hskip 3mm\genfrac{}{}{0pt}{1}{\matulaefi}{569}}
\def\Mego{\hskip 3mm\genfrac{}{}{0pt}{1}{\matulaai\matulae\matulac\matulab}{570}}
\def\Mega{\hskip 3mm\genfrac{}{}{0pt}{1}{\matulaega}{571}}
\def\Megb{\hskip 3mm\genfrac{}{}{0pt}{1}{\matulaac\matulaaa\matulab\matulab}{572}}

\def\Megc{\hskip 3mm\genfrac{}{}{0pt}{1}{\matulaaia\matulac}{573}}
\def\Megd{\hskip 3mm\genfrac{}{}{0pt}{1}{\matulada\matulag\matulab}{574}}
\def\Mege{\hskip 3mm\genfrac{}{}{0pt}{1}{\matulabc\matulae\matulae}{575}}
\def\Megf{\hskip 3mm\genfrac{}{}{0pt}{1}{\matulac\matulac\matulab\matulab\matulab\matulab\matulab\matulab}{576}}
\def\Megg{\hskip 3mm\genfrac{}{}{0pt}{1}{\matulaegg}{577}}
\def\Megh{\hskip 3mm\genfrac{}{}{0pt}{1}{\matulaag\matulaag\matulab}{578}}
\def\Megi{\hskip 3mm\genfrac{}{}{0pt}{1}{\matulaaic\matulac}{579}}
\def\Meho{\hskip 3mm\genfrac{}{}{0pt}{1}{\matulabi\matulae\matulab\matulab}{580}}
\def\Meha{\hskip 3mm\genfrac{}{}{0pt}{1}{\matulahc\matulag}{581}}
\def\Mehb{\hskip 3mm\genfrac{}{}{0pt}{1}{\matulaig\matulac\matulab}{582}}
\def\Mehc{\hskip 3mm\genfrac{}{}{0pt}{1}{\matulaec\matulaaa}{583}}
\def\Mehd{\hskip 3mm\genfrac{}{}{0pt}{1}{\matulagc\!\!\matulab\matulab\matulab}{584}}
\def\Mehe{\hskip 3mm\genfrac{}{}{0pt}{1}{\matulaac\matulae\matulac\matulac}{585}}
\def\Mehf{\hskip 3mm\genfrac{}{}{0pt}{1}{\matulabic\matulab}{586}}

\def\Mehg{\hskip 3mm\genfrac{}{}{0pt}{1}{\matulaehg}{587}}
\def\Mehh{\hskip 3mm\genfrac{}{}{0pt}{1}{\matulag\matulag\matulac\matulab\matulab}{588}}
\def\Mehi{\hskip 3mm\genfrac{}{}{0pt}{1}{\matulaca\matulaai}{589}}
\def\Meio{\hskip 3mm\genfrac{}{}{0pt}{1}{\matulaei\matulae\matulab}{590}}
\def\Meia{\hskip 3mm\genfrac{}{}{0pt}{1}{\matulaaig\matulac}{591}}
\def\Meib{\hskip 3mm\genfrac{}{}{0pt}{1}{\matulacg\matulab\matulab\matulab\matulab}{592}}
\def\Meic{\hskip 3mm\genfrac{}{}{0pt}{1}{\matulaeic}{593}}
\def\Meid{\hskip 3mm\genfrac{}{}{0pt}{1}{\matulaaa\matulac\matulac\matulac\matulab}{594}}
\def\Meie{\hskip 3mm\genfrac{}{}{0pt}{1}{\matulaag\matulag\matulae}{595}}
\def\Meif{\hskip 3mm\genfrac{}{}{0pt}{1}{\matulaadi\matulab\matulab}{596}}
\def\Meig{\hskip 3mm\genfrac{}{}{0pt}{1}{\matulaaii\matulac}{597}}
\def\Meih{\hskip 3mm\genfrac{}{}{0pt}{1}{\matulabc\matulaac\matulab}{598}}
\def\Meii{\hskip 3mm\genfrac{}{}{0pt}{1}{\matulaeii}{599}}
\def\Mfoo{\hskip 3mm\genfrac{}{}{0pt}{1}{\matulae\matulae\matulac\matulab\matulab\matulab}{600}}

\def\Mfoa{\hskip 3mm\genfrac{}{}{0pt}{1}{\matulafoa}{601}}
\def\Mfob{\hskip 3mm\genfrac{}{}{0pt}{1}{\matuladc\matulag\matulab}{602}}
\def\Mfoc{\hskip 3mm\genfrac{}{}{0pt}{1}{\matulafg\matulac\matulac}{603}}
\def\Mfod{\hskip 3mm\genfrac{}{}{0pt}{1}{\matulaaea\matulab\matulab}{604}}
\def\Mfoe{\hskip 3mm\genfrac{}{}{0pt}{1}{\matulaaa\matulaaa\matulae}{605}}
\def\Mfof{\hskip 3mm\genfrac{}{}{0pt}{1}{\matulaaoa\matulac\matulab}{606}}
\def\Mfog{\hskip 3mm\genfrac{}{}{0pt}{1}{\matulafog}{607}}
\def\Mfoh{\hskip 3mm\genfrac{}{}{0pt}{1}{\matulaai\matulab\matulab\matulab\matulab\matulab}{608}}
\def\Mfoi{\hskip 3mm\genfrac{}{}{0pt}{1}{\matulabi\matulag\matulac}{609}}
\def\Mfao{\hskip 3mm\genfrac{}{}{0pt}{1}{\matulafa\matulae\matulab}{610}}
\def\Mfaa{\hskip 3mm\genfrac{}{}{0pt}{1}{\matuladg\matulaac}{611}}
\def\Mfab{\hskip 3mm\genfrac{}{}{0pt}{1}{\matulaag\matulac\matulac\matulab\matulab}{612}}
\def\Mfac{\hskip 3mm\genfrac{}{}{0pt}{1}{\matulafac}{613}}
\def\Mfad{\hskip 3mm\genfrac{}{}{0pt}{1}{\matulacog\matulab}{614}}
\def\Mfae{\hskip 3mm\genfrac{}{}{0pt}{1}{\matulada\matulae\matulac}{615}}

\def\Mfaf{\hskip 3mm\genfrac{}{}{0pt}{1}{\matulaaa\matulag\matulab\matulab\matulab}{616}}
\def\Mfag{\hskip 3mm\genfrac{}{}{0pt}{1}{\matulafag}{617}} \ \ 
\def\Mfah{\hskip 3mm\genfrac{}{}{0pt}{1}{\matulaaoc\matulac\matulab}{618}}
\def\Mfai{\hskip 3mm\genfrac{}{}{0pt}{1}{\matulafai}{619}}
\def\Mfbo{\hskip 3mm\genfrac{}{}{0pt}{1}{\matulaca\matulae\matulab\matulab}{620}}
\def\Mfba{\hskip 3mm\genfrac{}{}{0pt}{1}{\matulabc\matulac\matulac\matulac}{621}}
\def\Mfbb{\hskip 3mm\genfrac{}{}{0pt}{1}{\matulacaa\matulab}{622}}
\def\Mfbc{\hskip 3mm\genfrac{}{}{0pt}{1}{\matulahi\matulag}{623}}
\def\Mfbd{\hskip 3mm\genfrac{}{}{0pt}{1}{\matulaac\matulac\matulab\matulab\matulab\matulab}{624}}
\def\Mfbe{\hskip 3mm\genfrac{}{}{0pt}{1}{\matulae\matulae\matulae\matulae}{625}}
\def\Mfbf{\hskip 3mm\genfrac{}{}{0pt}{1}{\matulacac\matulab}{626}}
\def\Mfbg{\hskip 3mm\genfrac{}{}{0pt}{1}{\matulaai\matulaaa\matulac}{627}}
\def\Mfbh{\hskip 3mm\genfrac{}{}{0pt}{1}{\matulaaeg\matulab\matulab}{628}}
\def\Mfbi{\hskip 3mm\genfrac{}{}{0pt}{1}{\matulacg\matulaag}{629}}
\def\Mfco{\hskip 3mm\genfrac{}{}{0pt}{1}{\matulag\matulae\matulac\matulac\matulab}{630}}

\def\Mfca{\hskip 3mm\genfrac{}{}{0pt}{1}{\matulafca}{631}}
\def\Mfcb{\hskip 3mm\genfrac{}{}{0pt}{1}{\matulagi\matulab\matulab\matulab}{632}}
\def\Mfcc{\hskip 3mm\genfrac{}{}{0pt}{1}{\matulabaa\matulac}{633}}
\def\Mfcd{\hskip 3mm\genfrac{}{}{0pt}{1}{\matulacag\matulab}{634}}
\def\Mfce{\hskip 3mm\genfrac{}{}{0pt}{1}{\matulaabg\matulae}{635}}
\def\Mfcf{\hskip 3mm\genfrac{}{}{0pt}{1}{\matulaec\matulac\matulab\matulab}{636}}
\def\Mfcg{\hskip 3mm\genfrac{}{}{0pt}{1}{\matulaac\matulag\matulag}{637}}
\def\Mfch{\hskip 3mm\genfrac{}{}{0pt}{1}{\matulabi\matulaaa\matulab}{638}}
\def\Mfci{\hskip 3mm\genfrac{}{}{0pt}{1}{\matulaga\matulac\matulac}{639}}
\def\Mfdo{\hskip 3mm\genfrac{}{}{0pt}{1}{\matulae\matulab\matulab\matulab\matulab\matulab\matulab\matulab}{640}}
\def\Mfda{\hskip 3mm\genfrac{}{}{0pt}{1}{\matulafda}{641}}
\def\Mfdb{\hskip 3mm\genfrac{}{}{0pt}{1}{\matulaaog\matulac\matulab}{642}}
\def\Mfdc{\hskip 3mm\genfrac{}{}{0pt}{1}{\matulafdc}{643}}
\def\Mfdd{\hskip 3mm\genfrac{}{}{0pt}{1}{\matulabc\matulag\matulab\matulab}{644}}

\def\Mfde{\hskip 3mm\genfrac{}{}{0pt}{1}{\matuladc\matulae\matulac}{645}}
\def\Mfdf{\hskip 3mm\genfrac{}{}{0pt}{1}{\matulaai\matulaag\matulab}{646}}
\def\Mfdg{\hskip 3mm\genfrac{}{}{0pt}{1}{\matulafdg}{647}}
\def\Mfdh{\hskip 3mm\genfrac{}{}{0pt}{1}{\matulac\matulac\matulac\matulac\matulab\matulab\matulab}{648}}
\def\Mfdi{\hskip 3mm\genfrac{}{}{0pt}{1}{\matulaei\matulaaa}{649}}
\def\Mfeo{\hskip 3mm\genfrac{}{}{0pt}{1}{\matulaac\matulae\matulae\matulab}{650}}
\def\Mfea{\hskip 3mm\genfrac{}{}{0pt}{1}{\matulaca\matulag\matulac}{651}} 
\def\Mfeb{\hskip 3mm\genfrac{}{}{0pt}{1}{\matulaafc\matulab\matulab}{652}} 
\def\Mfec{\hskip 3mm\genfrac{}{}{0pt}{1}{\matulafec}{653}}
\def\Mfed{\hskip 3mm\genfrac{}{}{0pt}{1}{\matulaaoi\matulac\matulab}{654}} 
\def\Mfee{\hskip 3mm\genfrac{}{}{0pt}{1}{\matulaaca\matulae}{655}}
\def\Mfef{\hskip 3mm\genfrac{}{}{0pt}{1}{\matulada\matulab\matulab\matulab\matulab}{656}}
\def\Mfeg{\hskip 3mm\genfrac{}{}{0pt}{1}{\matulagc\matulac\matulac}{657}}
\def\Mfeh{\hskip 3mm\genfrac{}{}{0pt}{1}{\matuladg\matulag\matulab}{658}}

\def\Mfei{\hskip 3mm\genfrac{}{}{0pt}{1}{\matulafei}{659}}
\def\Mffo{\hskip 3mm\genfrac{}{}{0pt}{1}{\matulaaa\matulae\matulac\matulab\matulab}{660}}
\def\Mffa{\hskip 3mm\genfrac{}{}{0pt}{1}{\matulaffa}{661}}
\def\Mffb{\hskip 3mm\genfrac{}{}{0pt}{1}{\matulacca\matulab}{662}}
\def\Mffc{\hskip 3mm\genfrac{}{}{0pt}{1}{\matulaag\matulaac\matulac}{663}}
\def\Mffd{\hskip 3mm\genfrac{}{}{0pt}{1}{\matulahc\matulab\matulab\matulab}{664}}
\def\Mffe{\hskip 3mm\genfrac{}{}{0pt}{1}{\matulaai\matulag\matulae}{665}}
\def\Mfff{\hskip 3mm\genfrac{}{}{0pt}{1}{\matulacg\matulac\matulac\matulab}{666}}
\def\Mffg{\hskip 3mm\genfrac{}{}{0pt}{1}{\matulabi\matulabc}{667}}
\def\Mffh{\hskip 3mm\genfrac{}{}{0pt}{1}{\matulaafg\matulab\matulab}{668}}
\def\Mffi{\hskip 3mm\genfrac{}{}{0pt}{1}{\matulabbc\matulac}{669}}
\def\Mfgo{\hskip 3mm\genfrac{}{}{0pt}{1}{\matulafg\matulae\matulab}{670}}
\def\Mfga{\hskip 3mm\genfrac{}{}{0pt}{1}{\matulafa\matulaaa}{671}}
\def\Mfgb{\hskip 3mm\genfrac{}{}{0pt}{1}{\matulag\matulac\matulab\matulab\matulab\matulab\matulab}{672}}

\def\Mfgc{\hskip 3mm\genfrac{}{}{0pt}{1}{\matulafgc}{673}}
\def\Mfgd{\hskip 3mm\genfrac{}{}{0pt}{1}{\matulaccg\matulab}{674}}
\def\Mfge{\hskip 3mm\genfrac{}{}{0pt}{1}{\matulae\matulae\matulac\matulac\matulac}{675}}
\def\Mfgf{\hskip 3mm\genfrac{}{}{0pt}{1}{\matulaac\matulaac\matulab\matulab}{676}}
\def\Mfgg{\hskip 3mm\genfrac{}{}{0pt}{1}{\matulafgg}{677}}
\def\Mfgh{\hskip 3mm\genfrac{}{}{0pt}{1}{\matulaaac\matulac\matulab}{678}}
\def\Mfgi{\hskip 3mm\genfrac{}{}{0pt}{1}{\matulaig\matulag}{679}}
\def\Mfho{\hskip 3mm\genfrac{}{}{0pt}{1}{\matulaag\matulae\matulab\matulab\matulab}{680}}
\def\Mfha{\hskip 3mm\genfrac{}{}{0pt}{1}{\matulabbg\,\matulac}{681}}
\def\Mfhb{\hskip 3mm\genfrac{}{}{0pt}{1}{\matulaca\matulaaa\matulab}{682}}
\def\Mfhc{\hskip 3mm\genfrac{}{}{0pt}{1}{\matulafhc}{683}}
\def\Mfhd{\hskip 3mm\genfrac{}{}{0pt}{1}{\matulaai\matulac\matulac\matulab\matulab}{684}}
\def\Mfhe{\hskip 3mm\genfrac{}{}{0pt}{1}{\matulaacg\matulae}{685}}
\def\Mfhf{\hskip 3mm\genfrac{}{}{0pt}{1}{\matulag\matulag\matulag\matulab}{686}}

\def\Mfhg{\hskip 3mm\genfrac{}{}{0pt}{1}{\matulabbi\ \matulac}{687}} 
\def\Mfhh{\hskip 3mm\genfrac{}{}{0pt}{1}{\matuladc\matulab\matulab\matulab\matulab}{688}}
\def\Mfhi{\hskip 3mm\genfrac{}{}{0pt}{1}{\matulaec\matulaac}{689}}
\def\Mfio{\hskip 3mm\genfrac{}{}{0pt}{1}{\matulabc\matulae\matulac\matulab}{690}}
\def\Mfia{\hskip 3mm\genfrac{}{}{0pt}{1}{\matulafia}{691}}
\def\Mfib{\hskip 3mm\genfrac{}{}{0pt}{1}{\matulaagc\matulab\matulab}{692}}
\def\Mfic{\hskip 3mm\genfrac{}{}{0pt}{1}{\matulaaa\matulag\matulac\matulac}{693}}
\def\Mfid{\hskip 3mm\genfrac{}{}{0pt}{1}{\matulacdg\matulab}{694}}
\def\Mfie{\hskip 3mm\genfrac{}{}{0pt}{1}{\matulaaci\matulae}{695}}
\def\Mfif{\hskip 3mm\genfrac{}{}{0pt}{1}{\matulabi\matulac\matulab\matulab\matulab}{696}}
\def\Mfig{\hskip 3mm\genfrac{}{}{0pt}{1}{\matulada\matulaag}{697}}
\def\Mfih{\hskip 3mm\genfrac{}{}{0pt}{1}{\matulacdi\matulab}{698}} 
\def\Mfii{\hskip 3mm\genfrac{}{}{0pt}{1}{\matulabcc\matulac}{699}}
\def\Mgoo{\hskip 3mm\genfrac{}{}{0pt}{1}{\matulag\matulae\matulae\matulab\matulab}{700}}

\def\Mgoa{\hskip 4mm\genfrac{}{}{0pt}{1}{\matulagoa}{701}} 
\def\Mgob{\hskip 4mm\genfrac{}{}{0pt}{1}{\matulaac\matulac\matulac\matulac\matulab}{702}}
\def\Mgoc{\hskip 4mm\genfrac{}{}{0pt}{1}{\matulacg\matulaai}{703}}
\def\Mgod{\hskip 4mm\genfrac{}{}{0pt}{1}{\matulaaa\matulab\matulab\matulab\matulab\matulab\matulab}{704}}
\def\Mgoe{\hskip 4mm\genfrac{}{}{0pt}{1}{\matuladg\matulae\matulac}{705}}
\def\Mgof{\hskip 4mm\genfrac{}{}{0pt}{1}{\matulacec\matulab}{706}}
\def\Mgog{\hskip 4mm\genfrac{}{}{0pt}{1}{\matulaaoa\matulag}{707}}
\def\Mgoh{\hskip 4mm\genfrac{}{}{0pt}{1}{\matulaei\matulac\matulab\matulab}{708}}
\def\Mgoi{\hskip 4mm\genfrac{}{}{0pt}{1}{\matulagoi}{709}}
\def\Mgao{\hskip 4mm\genfrac{}{}{0pt}{1}{\matulaga\matulae\matulab}{710}}
\def\Mgaa{\hskip 4mm\genfrac{}{}{0pt}{1}{\matulagi\matulac\matulac}{711}}
\def\Mgab{\hskip 4mm\genfrac{}{}{0pt}{1}{\matulahi\matulab\matulab\matulab}{712}}
\def\Mgac{\hskip 4mm\genfrac{}{}{0pt}{1}{\matulaca\matulabc}{713}}

\def\Mgad{\hskip 4mm\genfrac{}{}{0pt}{1}{\matulaag\matulag\matulac\matulab}{714}}
\def\Mgae{\hskip 4mm\genfrac{}{}{0pt}{1}{\matulaac\matulaaa\matulae}{715}}
\def\Mgaf{\hskip 4mm\genfrac{}{}{0pt}{1}{\matulaagi\matulab\matulab}{716}}
\def\Mgag{\hskip 4mm\genfrac{}{}{0pt}{1}{\matulabci\ \matulac}{717}}
\def\Mgah{\hskip 4mm\genfrac{}{}{0pt}{1}{\matulacei\matulab}{718}}
\def\Mgai{\hskip 4mm\genfrac{}{}{0pt}{1}{\matulagai}{719}}
\def\Mgbo{\hskip 4mm\genfrac{}{}{0pt}{1}{\matulae\matulac\matulac\matulab\matulab\matulab\matulab}{720}}
\def\Mgba{\hskip 4mm\genfrac{}{}{0pt}{1}{\matulaaoc\matulag}{721}}
\def\Mgbb{\hskip 4mm\genfrac{}{}{0pt}{1}{\matulaai\matulaai\matulab}{722}}
\def\Mgbc{\hskip 4mm\genfrac{}{}{0pt}{1}{\matulabda\matulac}{723}}
\def\Mgbd{\hskip 4mm\genfrac{}{}{0pt}{1}{\matulaaha\matulab\matulab}{724}}
\def\Mgbe{\hskip 4mm\genfrac{}{}{0pt}{1}{\matulabi\matulae\matulae}{725}}
\def\Mgbf{\hskip 4mm\genfrac{}{}{0pt}{1}{\matulaaa\matulaaa\matulac\matulab}{726}}

\def\Mgbg{\hskip 4mm\genfrac{}{}{0pt}{1}{\matulagbg}{727}}
\def\Mgbh{\hskip 4mm\genfrac{}{}{0pt}{1}{\matulaac\matulag\matulab\matulab\matulab}{728}}
\def\Mgbi{\hskip 4mm\genfrac{}{}{0pt}{1}{\matulac\matulac\matulac\matulac\matulac\matulac}{729}}
\def\Mgco{\hskip 4mm\genfrac{}{}{0pt}{1}{\matulagc\matulae\matulab}{730}}
\def\Mgca{\hskip 4mm\genfrac{}{}{0pt}{1}{\matuladc\matulaag}{731}}
\def\Mgcb{\hskip 4mm\genfrac{}{}{0pt}{1}{\matulafa\matulac\matulab\matulab}{732}}
\def\Mgcc{\hskip 4mm\genfrac{}{}{0pt}{1}{\matulagcc}{733}}
\def\Mgcd{\hskip 4mm\genfrac{}{}{0pt}{1}{\matulacfg\matulab}{734}}
\def\Mgce{\hskip 4mm\genfrac{}{}{0pt}{1}{\matulag\matulag\matulae\matulac}{735}}
\def\Mgcf{\hskip 4mm\genfrac{}{}{0pt}{1}{\matulabc\matulab\matulab\matulab\matulab\matulab}{736}}
\def\Mgcg{\hskip 4mm\genfrac{}{}{0pt}{1}{\matulafg\matulaaa}{737}}
\def\Mgch{\hskip 4mm\genfrac{}{}{0pt}{1}{\matulada\matulac\matulac\matulab}{738}}
\def\Mgci{\hskip 4mm\genfrac{}{}{0pt}{1}{\matulagci}{739}}

\def\Mgdo{\hskip 4mm\genfrac{}{}{0pt}{1}{\matulacg\matulae\matulab\matulab}{740}}
\def\Mgda{\hskip 4mm\genfrac{}{}{0pt}{1}{\matulaai\matulaac\matulac}{741}}
\def\Mgdb{\hskip 4mm\genfrac{}{}{0pt}{1}{\matulaec\matulag\matulab}{742}}
\def\Mgdc{\hskip 4mm\genfrac{}{}{0pt}{1}{\matulagdc}{743}}
\def\Mgdd{\hskip 4mm\genfrac{}{}{0pt}{1}{\matulaca\matulac\matulab\matulab\matulab}{744}}
\def\Mgde{\hskip 4mm\genfrac{}{}{0pt}{1}{\matulaadi\ \matulae}{745}}
\def\Mgdf{\hskip 4mm\genfrac{}{}{0pt}{1}{\matulacgc\matulab}{746}}
\def\Mgdg{\hskip 4mm\genfrac{}{}{0pt}{1}{\matulahc\matulac\matulac}{747}}
\def\Mgdh{\hskip 4mm\genfrac{}{}{0pt}{1}{\matulaag\matulaaa\matulab\matulab}{748}}
\def\Mgdi{\hskip 4mm\genfrac{}{}{0pt}{1}{\matulaaog\matulag}{749}}
\def\Mgeo{\hskip 4mm\genfrac{}{}{0pt}{1}{\matulae\matulae\matulae\matulac\matulab}{750}}
\def\Mgea{\hskip 4mm\genfrac{}{}{0pt}{1}{\matulagea}{751}} 
\def\Mgeb{\hskip 4mm\genfrac{}{}{0pt}{1}{\matuladg\matulab\matulab\matulab\matulab}{752}}

\def\Mgec{\hskip 4mm\genfrac{}{}{0pt}{1}{\matulabea\matulac}{753}}
\def\Mged{\hskip 4mm\genfrac{}{}{0pt}{1}{\matulabi\matulaac\matulab}{754}}
\def\Mgee{\hskip 4mm\genfrac{}{}{0pt}{1}{\matulaaea\matulae}{755}}
\def\Mgef{\hskip 4mm\genfrac{}{}{0pt}{1}{\matulag\matulac\matulac\matulac\matulab\matulab}{756}}
\def\Mgeg{\hskip 4mm\genfrac{}{}{0pt}{1}{\matulageg}{757}}
\def\Mgeh{\hskip 4mm\genfrac{}{}{0pt}{1}{\matulacgi\matulab}{758}}
\def\Mgei{\hskip 4mm\genfrac{}{}{0pt}{1}{\matulabc\matulaaa\matulac}{759}}
\def\Mgfo{\hskip 4mm\genfrac{}{}{0pt}{1}{\matulaai\matulae\matulab\matulab\matulab}{760}}
\def\Mgfa{\hskip 4mm\genfrac{}{}{0pt}{1}{\matulagfa}{761}} 
\def\Mgfb{\hskip 4mm\genfrac{}{}{0pt}{1}{\matulaabg\matulac\matulab}{762}}
\def\Mgfc{\hskip 4mm\genfrac{}{}{0pt}{1}{\matulaaoi\matulag}{763}}
\def\Mgfd{\hskip 4mm\genfrac{}{}{0pt}{1}{\matulaaia\matulab\matulab}{764}}

\def\Mgfe{\hskip 4mm\genfrac{}{}{0pt}{1}{\matulaag\matulae\matulac\matulac}{765}}
\def\Mgff{\hskip 4mm\genfrac{}{}{0pt}{1}{\matulachc\matulab}{766}}
\def\Mgfg{\hskip 4mm\genfrac{}{}{0pt}{1}{\matulaei\matulaac}{767}}
\def\Mgfh{\hskip 4mm\genfrac{}{}{0pt}{1}{\matulac\matulab\matulab\matulab\matulab\matulab\matulab\matulab\matulab}{768}}
\def\Mgfi{\hskip 4mm\genfrac{}{}{0pt}{1}{\matulagfi}{769}} 
\def\Mggo{\hskip 4mm\genfrac{}{}{0pt}{1}{\matulaaa\matulag\matulae\matulab}{770}}
\def\Mgga{\hskip 4mm\genfrac{}{}{0pt}{1}{\matulabeg\matulac}{771}}
\def\Mggb{\hskip 4mm\genfrac{}{}{0pt}{1}{\matulaaic\matulab\matulab}{772}}
\def\Mggc{\hskip 4mm\genfrac{}{}{0pt}{1}{\matulaggc}{773}}
\def\Mggd{\hskip 4mm\genfrac{}{}{0pt}{1}{\matuladc\matulac\matulac\matulab}{774}}
\def\Mgge{\hskip 4mm\genfrac{}{}{0pt}{1}{\matulaca\matulae\matulae}{775}}
\def\Mggf{\hskip 4mm\genfrac{}{}{0pt}{1}{\matulaig\matulab\matulab\matulab}{776}}

\def\Mggg{\hskip 4mm\genfrac{}{}{0pt}{1}{\matulacg\matulag\matulac}{777}}
\def\Mggh{\hskip 4mm\genfrac{}{}{0pt}{1}{\matulachi\matulab}{778}}
\def\Mggi{\hskip 4mm\genfrac{}{}{0pt}{1}{\matulada\matulaai}{779}}
\def\Mgho{\hskip 4mm\genfrac{}{}{0pt}{1}{\matulaac\matulae\matulac\matulab\matulab}{780}}
\def\Mgha{\hskip 4mm\genfrac{}{}{0pt}{1}{\matulaga\matulaaa}{781}}
\def\Mghb{\hskip 4mm\genfrac{}{}{0pt}{1}{\matulabc\matulaag\matulab}{782}}
\def\Mghc{\hskip 4mm\genfrac{}{}{0pt}{1}{\matulabi\matulac\matulac\matulac}{783}}
\def\Mghd{\hskip 4mm\genfrac{}{}{0pt}{1}{\matulag\matulag\matulab\matulab\matulab\matulab}{784}}
\def\Mghe{\hskip 4mm\genfrac{}{}{0pt}{1}{\matulaaeg\matulae}{785}}
\def\Mghf{\hskip 4mm\genfrac{}{}{0pt}{1}{\matulaaca\matulac\matulab}{786}}
\def\Mghg{\hskip 4mm\genfrac{}{}{0pt}{1}{\matulaghg}{787}} 
\def\Mghh{\hskip 4mm\genfrac{}{}{0pt}{1}{\matulaaig\matulab\matulab}{788}}

\def\Mghi{\hskip 4mm\genfrac{}{}{0pt}{1}{\matulabfc\ \,\matulac}{789}}
\def\Mgio{\hskip 4mm\genfrac{}{}{0pt}{1}{\matulagi\matulae\matulab}{790}}
\def\Mgia{\hskip 4mm\genfrac{}{}{0pt}{1}{\matulaaac\matulag}{791}}
\def\Mgib{\hskip 4mm\genfrac{}{}{0pt}{1}{\matulaaa\matulac\matulac\matulab\matulab\matulab}{792}}
\def\Mgic{\hskip 4mm\genfrac{}{}{0pt}{1}{\matulafa\matulaac}{793}}
\def\Mgid{\hskip 4mm\genfrac{}{}{0pt}{1}{\matulacig\matulab}{794}}
\def\Mgie{\hskip 4mm\genfrac{}{}{0pt}{1}{\matulaec\matulae\matulac}{795}}
\def\Mgif{\hskip 4mm\genfrac{}{}{0pt}{1}{\matulaaii\matulab\matulab}{796}}
\def\Mgig{\hskip 4mm\genfrac{}{}{0pt}{1}{\matulagig}{797}}
\def\Mgih{\hskip 4mm\genfrac{}{}{0pt}{1}{\matulaai\matulag\matulac\matulab}{798}}
\def\Mgii{\hskip 4mm\genfrac{}{}{0pt}{1}{\matulacg\matulaag}{799}}
\def\Mhoo{\hskip 4mm\genfrac{}{}{0pt}{1}{\matulae\matulae\matulab\matulab\matulab\matulab\matulab}{800}}

\def\Mhoa{\hskip 4mm\genfrac{}{}{0pt}{1}{\matulahi\matulac\matulac}{801}}
\def\Mhob{\hskip 4mm\genfrac{}{}{0pt}{1}{\matuladoa\matulab}{802}}
\def\Mhoc{\hskip 4mm\genfrac{}{}{0pt}{1}{\matulagc\matulaaa}{803}}
\def\Mhod{\hskip 4mm\genfrac{}{}{0pt}{1}{\matulafg\matulac\matulab\matulab}{804}}
\def\Mhoe{\hskip 4mm\genfrac{}{}{0pt}{1}{\matulabc\matulag\matulae}{805}}
\def\Mhof{\hskip 4mm\genfrac{}{}{0pt}{1}{\matulaca\matulaac\matulab}{806}}
\def\Mhog{\hskip 4mm\genfrac{}{}{0pt}{1}{\matulabfi\ \matulac}{807}}
\def\Mhoh{\hskip 4mm\genfrac{}{}{0pt}{1}{\matulaaoa\matulab\matulab\matulab}{808}}
\def\Mhoi{\hskip 4mm\genfrac{}{}{0pt}{1}{\matulahoi}{809}}
\def\Mhao{\hskip 4mm\genfrac{}{}{0pt}{1}{\matulae\matulac\matulac\matulac\matulac\matulab}{810}}
\def\Mhaa{\hskip 4mm\genfrac{}{}{0pt}{1}{\matulahaa}{811}}
\def\Mhab{\hskip 4mm\genfrac{}{}{0pt}{1}{\matulabi\matulag\matulab\matulab}{812}}
\def\Mhac{\hskip 4mm\genfrac{}{}{0pt}{1}{\matulabga\matulac}{813}}

\def\Mhad{\hskip 4mm\genfrac{}{}{0pt}{1}{\matulacg\matulaaa\matulab}{814}}
\def\Mhae{\hskip 4mm\genfrac{}{}{0pt}{1}{\matulaafc\matulae}{815}}
\def\Mhaf{\hskip 4mm\genfrac{}{}{0pt}{1}{\matulaag\matulac\matulab\matulab\matulab\matulab}{816}}
\def\Mhag{\hskip 4mm\genfrac{}{}{0pt}{1}{\matuladc\matulaai}{817}}
\def\Mhah{\hskip 4mm\genfrac{}{}{0pt}{1}{\matuladoi\matulab}{818}}
\def\Mhai{\hskip 4mm\genfrac{}{}{0pt}{1}{\matulaac\matulag\matulac\matulac}{819}}
\def\Mhbo{\hskip 4mm\genfrac{}{}{0pt}{1}{\matulada\matulae\matulab\matulab}{820}}
\def\Mhba{\hskip 4mm\genfrac{}{}{0pt}{1}{\matulahba}{821}}
\def\Mhbb{\hskip 4mm\genfrac{}{}{0pt}{1}{\matulaacg\matulac\matulab}{822}}
\def\Mhbc{\hskip 4mm\genfrac{}{}{0pt}{1}{\matulahbc}{823}}
\def\Mhbd{\hskip 4mm\genfrac{}{}{0pt}{1}{\matulaaoc\matulab\matulab\matulab}{824}}
\def\Mhbe{\hskip 4mm\genfrac{}{}{0pt}{1}{\matulaaa\matulae\matulae\matulac}{825}}
\def\Mhbf{\hskip 4mm\genfrac{}{}{0pt}{1}{\matulaei\matulag\matulab}{826}}

\def\Mhbg{\hskip 4mm\genfrac{}{}{0pt}{1}{\matulahbg}{827}}
\def\Mhbh{\hskip 4mm\genfrac{}{}{0pt}{1}{\matulabc\matulac\matulac\matulab\matulab}{828}}
\def\Mhbi{\hskip 4mm\genfrac{}{}{0pt}{1}{\matulahbi}{829}}
\def\Mhco{\hskip 4mm\genfrac{}{}{0pt}{1}{\matulahc\matulae\matulab}{830}}
\def\Mhca{\hskip 4mm\genfrac{}{}{0pt}{1}{\matulabgg\matulac}{831}}
\def\Mhcb{\hskip 4mm\genfrac{}{}{0pt}{1}{\matulaac\matulab\matulab\matulab\matulab\matulab\matulab}{832}}
\def\Mhcc{\hskip 4mm\genfrac{}{}{0pt}{1}{\matulaag\matulag\matulag}{833}}
\def\Mhcd{\hskip 4mm\genfrac{}{}{0pt}{1}{\matulaaci\matulac\matulab}{834}}
\def\Mhce{\hskip 4mm\genfrac{}{}{0pt}{1}{\matulaafg\matulae}{835}}
\def\Mhcf{\hskip 4mm\genfrac{}{}{0pt}{1}{\matulaai\matulaaa\matulab\matulab}{836}}
\def\Mhcg{\hskip 4mm\genfrac{}{}{0pt}{1}{\matulaca\matulac\matulac\matulac}{837}}
\def\Mhch{\hskip 4mm\genfrac{}{}{0pt}{1}{\matuladai\ \matulab}{838}}
\def\Mhci{\hskip 4mm\genfrac{}{}{0pt}{1}{\matulahci}{839}}

\def\Mhdo{\hskip 4mm\genfrac{}{}{0pt}{1}{\matulag\matulae\matulac\matulab\matulab\matulab}{840}}
\def\Mhda{\hskip 4mm\genfrac{}{}{0pt}{1}{\matulabi\matulabi}{841}}
\def\Mhdb{\hskip 4mm\genfrac{}{}{0pt}{1}{\matuladba\matulab}{842}}
\def\Mhdc{\hskip 4mm\genfrac{}{}{0pt}{1}{\matulabha\ \,\matulac}{843}}
\def\Mhdd{\hskip 4mm\genfrac{}{}{0pt}{1}{\matulabaa\matulab\matulab}{844}}
\def\Mhde{\hskip 4mm\genfrac{}{}{0pt}{1}{\matulaac\matulaac\matulae}{845}}
\def\Mhdf{\hskip 4mm\genfrac{}{}{0pt}{1}{\matuladg\matulac\matulac\matulab}{846}}
\def\Mhdg{\hskip 4mm\genfrac{}{}{0pt}{1}{\matulaaa\matulaaa\matulag}{847}}
\def\Mhdh{\hskip 4mm\genfrac{}{}{0pt}{1}{\matulaec\matulab\matulab\matulab\matulab}{848}}
\def\Mhdi{\hskip 4mm\genfrac{}{}{0pt}{1}{\matulabhc\matulac}{849}}
\def\Mheo{\hskip 4mm\genfrac{}{}{0pt}{1}{\matulaag\matulae\matulae\matulab}{850}}
\def\Mhea{\hskip 4mm\genfrac{}{}{0pt}{1}{\matulacg\matulabc}{851}}
\def\Mheb{\hskip 4mm\genfrac{}{}{0pt}{1}{\matulaga\matulac\matulab\matulab}{852}}

\def\Mhec{\hskip 4mm\genfrac{}{}{0pt}{1}{\matulahec}{853}} 
\def\Mhed{\hskip 4mm\genfrac{}{}{0pt}{1}{\matulafa\matulag\matulab}{854}}
\def\Mhee{\hskip 4mm\genfrac{}{}{0pt}{1}{\matulaai\matulae\matulac\matulac}{855}}
\def\Mhef{\hskip 4mm\genfrac{}{}{0pt}{1}{\matulaaog\matulab\matulab\matulab}{856}}
\def\Mheg{\hskip 4mm\genfrac{}{}{0pt}{1}{\matulaheg}{857}}
\def\Mheh{\hskip 4mm\genfrac{}{}{0pt}{1}{\matulaac\matulaaa\matulac\matulab}{858}}
\def\Mhei{\hskip 4mm\genfrac{}{}{0pt}{1}{\matulahei}{859}}
\def\Mhfo{\hskip 4mm\genfrac{}{}{0pt}{1}{\matuladc\matulae\matulab\matulab}{860}}
\def\Mhfa{\hskip 4mm\genfrac{}{}{0pt}{1}{\matulada\matulag\matulac}{861}}
\def\Mhfb{\hskip 4mm\genfrac{}{}{0pt}{1}{\matuladca\matulab}{862}}
\def\Mhfc{\hskip 4mm\genfrac{}{}{0pt}{1}{\matulahfc}{863}}
\def\Mhfd{\hskip 4mm\genfrac{}{}{0pt}{1}{\matulac\matulac\matulac\matulab\matulab\matulab\matulab\matulab}{864}}

\def\Mhfe{\hskip 4mm\genfrac{}{}{0pt}{1}{\matulaagc\matulae}{865}}
\def\Mhff{\hskip 4mm\genfrac{}{}{0pt}{1}{\matuladcc\ \matulab}{866}}
\def\Mhfg{\hskip 4mm\genfrac{}{}{0pt}{1}{\matulaag\matulaag\matulac}{867}}
\def\Mhfh{\hskip 4mm\genfrac{}{}{0pt}{1}{\matulaca\matulag\matulab\matulab}{868}}
\def\Mhfi{\hskip 4mm\genfrac{}{}{0pt}{1}{\matulagi\matulaaa}{869}}
\def\Mhgo{\hskip 4mm\genfrac{}{}{0pt}{1}{\matulabi\matulae\matulac\matulab}{870}}
\def\Mhga{\hskip 4mm\genfrac{}{}{0pt}{1}{\matulafg\matulaac}{871}}
\def\Mhgb{\hskip 4mm\genfrac{}{}{0pt}{1}{\matulaaoi\matulab\matulab\matulab}{872}}
\def\Mhgc{\hskip 4mm\genfrac{}{}{0pt}{1}{\matulaig\matulac\matulac}{873}}
\def\Mhgd{\hskip 4mm\genfrac{}{}{0pt}{1}{\matulabc\matulaai\matulab}{874}}
\def\Mhge{\hskip 4mm\genfrac{}{}{0pt}{1}{\matulag\matulae\matulae\matulae}{875}}
\def\Mhgf{\hskip 4mm\genfrac{}{}{0pt}{1}{\matulagc\matulac\matulab\matulab}{876}}

\def\Mhgg{\hskip 4mm\genfrac{}{}{0pt}{1}{\matulahgg}{877}}
\def\Mhgh{\hskip 4mm\genfrac{}{}{0pt}{1}{\matuladci\matulab}{878}}
\def\Mhgi{\hskip 4mm\genfrac{}{}{0pt}{1}{\matulabic\ \matulac}{879}}
\def\Mhho{\hskip 4mm\genfrac{}{}{0pt}{1}{\matulaaa\matulae\matulab\matulab\matulab\matulab}{880}}
\def\Mhha{\hskip 4mm\genfrac{}{}{0pt}{1}{\matulahha}{881}}
\def\Mhhb{\hskip 4mm\genfrac{}{}{0pt}{1}{\matulag\matulag\matulac\matulac\matulab}{882}}
\def\Mhhc{\hskip 4mm\genfrac{}{}{0pt}{1}{\matulahhc}{883}}
\def\Mhhd{\hskip 4mm\genfrac{}{}{0pt}{1}{\matulaag\matulaac\matulab\matulab}{884}}
\def\Mhhe{\hskip 4mm\genfrac{}{}{0pt}{1}{\matulaei\matulae\matulac}{885}}
\def\Mhhf{\hskip 4mm\genfrac{}{}{0pt}{1}{\matuladdc\matulab}{886}}
\def\Mhhg{\hskip 4mm\genfrac{}{}{0pt}{1}{\matulahhg}{887}}
\def\Mhhh{\hskip 4mm\genfrac{}{}{0pt}{1}{\matulacg\matulac\matulab\matulab\matulab}{888}}

\def\Mhhi{\hskip 4mm\genfrac{}{}{0pt}{1}{\matulaabg\matulag}{889}}
\def\Mhio{\hskip 4mm\genfrac{}{}{0pt}{1}{\matulahi\matulae\matulab}{890}}
\def\Mhia{\hskip 4mm\genfrac{}{}{0pt}{1}{\matulaaa\matulac\matulac\matulac\matulac}{891}}
\def\Mhib{\hskip 4mm\genfrac{}{}{0pt}{1}{\matulabbc\matulab\matulab}{892}}
\def\Mhic{\hskip 4mm\genfrac{}{}{0pt}{1}{\matuladg\matulaai}{893}}
\def\Mhid{\hskip 4mm\genfrac{}{}{0pt}{1}{\matulaadi\matulac\matulab}{894}}
\def\Mhie{\hskip 4mm\genfrac{}{}{0pt}{1}{\matulaagi\matulae}{895}}
\def\Mhif{\hskip 4mm\genfrac{}{}{0pt}{1}{\matulag\matulab\matulab\matulab\matulab\matulab\matulab\matulab}{896}}
\def\Mhig{\hskip 4mm\genfrac{}{}{0pt}{1}{\matulabc\matulaac\matulac}{897}}
\def\Mhih{\hskip 4mm\genfrac{}{}{0pt}{1}{\matuladdi\matulab}{898}}
\def\Mhii{\hskip 4mm\genfrac{}{}{0pt}{1}{\matulaca\matulabi}{899}}
\def\Mioo{\hskip 4mm\genfrac{}{}{0pt}{1}{\matulae\matulae\matulac\matulac\matulab\matulab}{900}}

\def\Mioa{\hskip 4mm\genfrac{}{}{0pt}{1}{\matulaec\matulaag}{901}}
\def\Miob{\hskip 4mm\genfrac{}{}{0pt}{1}{\matulada\matulaaa\matulab}{902}}
\def\Mioc{\hskip 4mm\genfrac{}{}{0pt}{1}{\matuladc\matulag\matulac}{903}}
\def\Miod{\hskip 4mm\genfrac{}{}{0pt}{1}{\matulaaac\matulab\matulab\matulab}{904}}
\def\Mioe{\hskip 4mm\genfrac{}{}{0pt}{1}{\matulaaha\matulae}{905}}
\def\Miof{\hskip 4mm\genfrac{}{}{0pt}{1}{\matulaaea\matulac\matulab}{906}}
\def\Miog{\hskip 4mm\genfrac{}{}{0pt}{1}{\matulaiog}{907}}
\def\Mioh{\hskip 4mm\genfrac{}{}{0pt}{1}{\matulabbg\matulab\matulab}{908}}
\def\Mioi{\hskip 4mm\genfrac{}{}{0pt}{1}{\matulaaoa\matulac\matulac}{909}}
\def\Miao{\hskip 4mm\genfrac{}{}{0pt}{1}{\matulaac\matulag\matulae\matulab}{910}}
\def\Miaa{\hskip 4mm\genfrac{}{}{0pt}{1}{\matulaiaa}{911}}
\def\Miab{\hskip 4mm\genfrac{}{}{0pt}{1}{\matulaai\matulac\matulab\matulab\matulab\matulab}{912}}
\def\Miac{\hskip 4mm\genfrac{}{}{0pt}{1}{\matulahc\matulaaa}{913}}

\def\Miad{\hskip 4mm\genfrac{}{}{0pt}{1}{\matuladeg\matulab}{914}}
\def\Miae{\hskip 4mm\genfrac{}{}{0pt}{1}{\matulafa\matulae\matulac}{915}}
\def\Miaf{\hskip 4mm\genfrac{}{}{0pt}{1}{\matulabbi\matulab\matulab}{916}}
\def\Miag{\hskip 4mm\genfrac{}{}{0pt}{1}{\matulaaca\matulag}{917}}
\def\Miah{\hskip 4mm\genfrac{}{}{0pt}{1}{\matulaag\matulac\matulac\matulac\matulab}{918}}
\def\Miai{\hskip 4mm\genfrac{}{}{0pt}{1}{\matulaiai}{919}}
\def\Mibo{\hskip 4mm\genfrac{}{}{0pt}{1}{\matulabc\matulae\matulab\matulab\matulab}{920}}
\def\Miba{\hskip 4mm\genfrac{}{}{0pt}{1}{\matulacog\ \matulac}{921}}
\def\Mibb{\hskip 4mm\genfrac{}{}{0pt}{1}{\matuladfa\matulab}{922}}
\def\Mibc{\hskip 4mm\genfrac{}{}{0pt}{1}{\matulaga\matulaac}{923}}
\def\Mibd{\hskip 4mm\genfrac{}{}{0pt}{1}{\matulaaa\matulag\matulac\matulab\matulab}{924}}
\def\Mibe{\hskip 4mm\genfrac{}{}{0pt}{1}{\matulacg\matulae\matulae}{925}}
\def\Mibf{\hskip 4mm\genfrac{}{}{0pt}{1}{\matuladfc\ \matulab}{926}}

\def\Mibg{\hskip 4mm\genfrac{}{}{0pt}{1}{\matulaaoc\ \matulac\matulac}{927}}
\def\Mibh{\hskip 4mm\genfrac{}{}{0pt}{1}{\matulabi\matulab\matulab\matulab\matulab\matulab}{928}}
\def\Mibi{\hskip 4mm\genfrac{}{}{0pt}{1}{\matulaibi}{929}}
\def\Mico{\hskip 4mm\genfrac{}{}{0pt}{1}{\matulaca\matulae\matulac\matulab}{930}}
\def\Mica{\hskip 4mm\genfrac{}{}{0pt}{1}{\matulaai\matulag\matulag}{931}}
\def\Micb{\hskip 4mm\genfrac{}{}{0pt}{1}{\matulabcc\matulab\matulab}{932}}
\def\Micc{\hskip 4mm\genfrac{}{}{0pt}{1}{\matulacaa\,\matulac}{933}}
\def\Micd{\hskip 4mm\genfrac{}{}{0pt}{1}{\matuladfg\matulab}{934}}
\def\Mice{\hskip 4mm\genfrac{}{}{0pt}{1}{\matulaag\matulaaa\matulae}{935}}
\def\Micf{\hskip 4mm\genfrac{}{}{0pt}{1}{\matulaac\matulac\matulac\matulab\matulab\matulab}{936}}
\def\Micg{\hskip 4mm\genfrac{}{}{0pt}{1}{\matulaicg}{937}}
\def\Mich{\hskip 4mm\genfrac{}{}{0pt}{1}{\matulafg\matulag\matulab}{938}}
\def\Mici{\hskip 4mm\genfrac{}{}{0pt}{1}{\matulacac\, \matulac}{939}}

\def\Mido{\hskip 4mm\genfrac{}{}{0pt}{1}{\matuladg\matulae\matulab\matulab}{940}}
\def\Mida{\hskip 4mm\genfrac{}{}{0pt}{1}{\matulaida}{941}}
\def\Midb{\hskip 4mm\genfrac{}{}{0pt}{1}{\matulaaeg\matulac\matulab}{942}}
\def\Midc{\hskip 4mm\genfrac{}{}{0pt}{1}{\matulada\matulabc}{943}}
\def\Midd{\hskip 4mm\genfrac{}{}{0pt}{1}{\matulaei\matulab\matulab\matulab\matulab}{944}}
\def\Mide{\hskip 4mm\genfrac{}{}{0pt}{1}{\matulag\matulae\matulac\matulac\matulac}{945}}
\def\Midf{\hskip 4mm\genfrac{}{}{0pt}{1}{\matuladc\matulaaa\matulab}{946}}
\def\Midg{\hskip 4mm\genfrac{}{}{0pt}{1}{\matulaidg}{947}}
\def\Midh{\hskip 4mm\genfrac{}{}{0pt}{1}{\matulagi\matulac\matulab\matulab}{948}}
\def\Midi{\hskip 4mm\genfrac{}{}{0pt}{1}{\matulagc\matulaac}{949}}
\def\Mieo{\hskip 4mm\genfrac{}{}{0pt}{1}{\matulaai\matulae\matulae\matulab}{950}}
\def\Miea{\hskip 4mm\genfrac{}{}{0pt}{1}{\matulacag\,\matulac}{951}}
\def\Mieb{\hskip 4mm\genfrac{}{}{0pt}{1}{\matulaag\matulaag\matulab\matulab\matulab}{952}}

\def\Miec{\hskip 4mm\genfrac{}{}{0pt}{1}{\matulaiec}{953}}
\def\Mied{\hskip 4mm\genfrac{}{}{0pt}{1}{\matulaec\matulac\matulac\matulab}{954}}
\def\Miee{\hskip 4mm\genfrac{}{}{0pt}{1}{\matulaaia\matulae}{955}}
\def\Mief{\hskip 4mm\genfrac{}{}{0pt}{1}{\matulabci\matulab\matulab}{956}}
\def\Mieg{\hskip 4mm\genfrac{}{}{0pt}{1}{\matulabi\matulaaa\matulac}{957}}
\def\Mieh{\hskip 4mm\genfrac{}{}{0pt}{1}{\matuladgi\matulab}{958}}
\def\Miei{\hskip 4mm\genfrac{}{}{0pt}{1}{\matulaacg\matulag}{959}}
\def\Mifo{\hskip 4mm\genfrac{}{}{0pt}{1}{\matulae\matulac\matulab\matulab\matulab\matulab\matulab\matulab}{960}}
\def\Mifa{\hskip 4mm\genfrac{}{}{0pt}{1}{\matulaca\matulaca}{961}}
\def\Mifb{\hskip 4mm\genfrac{}{}{0pt}{1}{\matulacg\matulaac\matulab}{962}}
\def\Mifc{\hskip 4mm\genfrac{}{}{0pt}{1}{\matulaaog\matulac\matulac}{963}}
\def\Mifd{\hskip 4mm\genfrac{}{}{0pt}{1}{\matulabda\matulab\matulab}{964}}

\def\Mife{\hskip 4mm\genfrac{}{}{0pt}{1}{\matulaaic\matulae}{965}}
\def\Miff{\hskip 4mm\genfrac{}{}{0pt}{1}{\matulabc\matulag\matulac\matulab}{966}}
\def\Mifg{\hskip 4mm\genfrac{}{}{0pt}{1}{\matulaifg}{967}}
\def\Mifh{\hskip 4mm\genfrac{}{}{0pt}{1}{\matulaaa\matulaaa\matulab\matulab\matulab}{968}}
\def\Mifi{\hskip 4mm\genfrac{}{}{0pt}{1}{\matulaai\matulaag\matulac}{969}}
\def\Migo{\hskip 4mm\genfrac{}{}{0pt}{1}{\matulaig\matulae\matulab}{970}}
\def\Miga{\hskip 4mm\genfrac{}{}{0pt}{1}{\matulaiga}{971}}
\def\Migb{\hskip 4mm\genfrac{}{}{0pt}{1}{\matulac\matulac\matulac\matulac\matulac\matulab\matulab}{972}}
\def\Migc{\hskip 4mm\genfrac{}{}{0pt}{1}{\matulaaci\matulag}{973}}
\def\Migd{\hskip 4mm\genfrac{}{}{0pt}{1}{\matuladhg\matulab}{974}}
\def\Mige{\hskip 4mm\genfrac{}{}{0pt}{1}{\matulaac\matulae\matulae\matulac}{975}}
\def\Migf{\hskip 4mm\genfrac{}{}{0pt}{1}{\matulafa\matulab\matulab\matulab\matulab}{976}}

\def\Migg{\hskip 4mm\genfrac{}{}{0pt}{1}{\matulaigg}{977}}
\def\Migh{\hskip 4mm\genfrac{}{}{0pt}{1}{\matulaafc\matulac\matulab}{978}}
\def\Migi{\hskip 4mm\genfrac{}{}{0pt}{1}{\matulahi\matulaaa}{979}}
\def\Miho{\hskip 4mm\genfrac{}{}{0pt}{1}{\matulag\matulag\matulae\matulab\matulab}{980}}
\def\Miha{\hskip 4mm\genfrac{}{}{0pt}{1}{\matulaaoi\matulac\matulac}{981}}
\def\Mihb{\hskip 4mm\genfrac{}{}{0pt}{1}{\matuladia\matulab}{982}}
\def\Mihc{\hskip 4mm\genfrac{}{}{0pt}{1}{\matulaihc}{983}}
\def\Mihd{\hskip 4mm\genfrac{}{}{0pt}{1}{\matulada\matulac\matulab\matulab\matulab}{984}}
\def\Mihe{\hskip 4mm\genfrac{}{}{0pt}{1}{\matulaaig\, \matulae}{985}}
\def\Mihf{\hskip 4mm\genfrac{}{}{0pt}{1}{\matulabi\matulaag\matulab}{986}}
\def\Mihg{\hskip 4mm\genfrac{}{}{0pt}{1}{\matuladg\matulag\matulac}{987}}
\def\Mihh{\hskip 4mm\genfrac{}{}{0pt}{1}{\matulaai\matulaac\matulab\matulab}{988}}

\def\Mihi{\hskip 4mm\genfrac{}{}{0pt}{1}{\matuladc\matulabc}{989}}
\def\Miio{\hskip 4mm\genfrac{}{}{0pt}{1}{\matulaaa\matulae\matulac\matulac\matulab}{990}}
\def\Miia{\hskip 4mm\genfrac{}{}{0pt}{1}{\matulaiia}{991}}
\def\Miib{\hskip 4mm\genfrac{}{}{0pt}{1}{\matulaca\matulab\matulab\matulab\matulab\matulab}{992}}
\def\Miic{\hskip 4mm\genfrac{}{}{0pt}{1}{\matulacca\matulac}{993}}
\def\Miid{\hskip 4mm\genfrac{}{}{0pt}{1}{\matulaga\matulag\matulab}{994}}
\def\Miie{\hskip 4mm\genfrac{}{}{0pt}{1}{\matulaaii\matulae}{995}}
\def\Miif{\hskip 4mm\genfrac{}{}{0pt}{1}{\matulahc\matulac\matulab\matulab}{996}}
\def\Miig{\hskip 4mm\genfrac{}{}{0pt}{1}{\matulaiig}{997}}
\def\Miih{\hskip 4mm\genfrac{}{}{0pt}{1}{\matuladii\matulab}{998}}
\def\Miii{\hskip 4mm\genfrac{}{}{0pt}{1}{\matulacg\matulac\matulac\matulac}{999}}
\def\Maooo{\hskip 4mm\genfrac{}{}{0pt}{1}{\matulae\matulae\matulae\matulab\matulab\matulab}{1000}}
\vfill
$$
\Ma\Mb\Mc\Md\Me\Mf\Mg\Mh\Mi\Mao\Maa\Mab\Mac\Mad\Mae\Maf\Mag\Mah\Mai\Mbo
$$

$$
\Mba\Mbb\Mbc\Mbd\Mbe\Mbf\Mbg\Mbh\Mbi\Mco\Mca\Mcb\Mcc\Mcd\Mce\Mcf\Mcg\Mch\Mci\Mdo
$$

$$
\hskip -3mm\Mda\Mdb\Mdc\Mdd\Mde\Mdf\Mdg\Mdh\Mdi\Meo\Mea\Meb\Mec\Med\Mee\Mef\Meg\Meh\Mei\Mfo
$$

$$
\hskip -4mm\Mfa\Mfb\Mfc\Mfd\Mfe\Mff\Mfg\Mfh\Mfi\Mgo\Mga\Mgb\Mgc\Mgd\Mge\Mgf\Mgg\Mgh\Mgi\Mho
$$

$$
\hskip -7mm\Mha\Mhb\Mhc\Mhd\Mhe\Mhf\Mhg\Mhh\Mhi\Mio\Mia\Mib\Mic\Mid\Mie\Mif\Mig\Mih\Mii\Maoo
$$
\vfill
$$
\hskip-3mm\Maoa\Maob\Maoc\Maod\Maoe\Maof\Maog\Maoh\Maoi\Maao\Maaa\Maab\Maac\Maad\Maae\Maaf\Maag
$$

$$
\hskip -5mm\Maah\Maai\Mabo\Maba\Mabb\Mabc\Mabd\Mabe\Mabf\Mabg\Mabh\Mabi\Maco\Maca\Macb\Macc\Macd
$$

$$
\hskip -4mm\Mace\Macf\Macg\Mach\Maci\Mado\Mada\Madb\Madc\Madd\Made\Madf\Madg\Madh\Madi\Maeo\Maea
$$

$$
\hskip -7mm\Maeb\Maec\Maed\Maee\Maef\Maeg\Maeh\Maei\Mafo\Mafa\Mafb\Mafc\Mafd\Mafe\Maff\Mafg\Mafh
$$

$$
\Mafi\Mago\Maga\Magb\Magc\Magd\Mage\Magf\Magg\Magh\Magi\Maho\Maha\Mahb\Mahc\Mahd
$$

$$
\Mahe\Mahf\Mahg\Mahh\Mahi\Maio\Maia\Maib\Maic\Maid\Maie\Maif\Maig\Maih\Maii\Mboo
$$
\eject
$$
\Mboa\Mbob\Mboc\Mbod\Mboe\Mbof\Mbog\Mboh\Mboi\Mbao\Mbaa\Mbab\Mbac\Mbad\Mbae
$$

$$
\Mbaf\Mbag\Mbah\Mbai\Mbbo\Mbba\Mbbb\Mbbc\Mbbd\Mbbe\Mbbf\Mbbg\Mbbh\Mbbi\Mbco
$$

$$
\Mbca\Mbcb\Mbcc\Mbcd\Mbce\Mbcf\Mbcg\Mbch\Mbci\Mbdo\Mbda\Mbdb\Mbdc\Mbdd
$$

$$
\Mbde\Mbdf\Mbdg\Mbdh\Mbdi\Mbeo\Mbea\Mbeb\Mbec\Mbed\Mbee\Mbef\Mbeg\Mbeh
$$

$$
\Mbei\Mbfo\Mbfa\Mbfb\Mbfc\Mbfd\Mbfe\Mbff\Mbfg\Mbfh\Mbfi\Mbgo\Mbga\Mbgb
$$

$$
\Mbgc\Mbgd\Mbge\Mbgf\Mbgg\Mbgh\Mbgi\Mbho\Mbha\Mbhb\Mbhc\Mbhd\Mbhe\Mbhf
$$

$$
\Mbhg\Mbhh\Mbhi\Mbio\Mbia\Mbib\Mbic\Mbid\Mbie\Mbif\Mbig\Mbih\Mbii\Mcoo
$$
\vfill
$$
\Mcoa\Mcob\Mcoc\Mcod\Mcoe\Mcof\Mcog\Mcoh\Mcoi\Mcao\Mcaa\Mcab\Mcac\Mcad\Mcae
$$

$$
\Mcaf\Mcag\Mcah\Mcai\Mcbo\Mcba\Mcbb\Mcbc\Mcbd\Mcbe\Mcbf\Mcbg\Mcbh\Mcbi\Mcco
$$

$$
\Mcca\Mccb\Mccc\Mccd\Mcce\Mccf\Mccg\Mcch\Mcci\Mcdo\Mcda\Mcdb\Mcdc\Mcdd
$$

$$
\Mcde\Mcdf\Mcdg\Mcdh\Mcdi\Mceo\Mcea\Mceb\Mcec\Mced\Mcee\Mcef\Mceg\Mceh
$$

$$
\Mcei\Mcfo\Mcfa\Mcfb\Mcfc\Mcfd\Mcfe\Mcff\Mcfg\Mcfh\Mcfi\Mcgo\Mcga\Mcgb
$$

$$
\Mcgc\Mcgd\Mcge\Mcgf\Mcgg\Mcgh\Mcgi\Mcho\Mcha\Mchb\Mchc\Mchd\Mche\Mchf
$$

$$
\Mchg\Mchh\Mchi\Mcio\Mcia\Mcib\Mcic\Mcid\Mcie\Mcif\Mcig\Mcih\Mcii\Mdoo
$$
\vfill
$$
\Mdoa\Mdob\Mdoc\Mdod\Mdoe\Mdof\Mdog\Mdoh\Mdoi\Mdao\Mdaa\Mdab\Mdac\Mdad\Mdae
$$

$$
\Mdaf\Mdag\Mdah\Mdai\Mdbo\Mdba\Mdbb\Mdbc\Mdbd\Mdbe\Mdbf\Mdbg\Mdbh\Mdbi\Mdco
$$

$$
\Mdca\Mdcb\Mdcc\Mdcd\Mdce\Mdcf\Mdcg\Mdch\Mdci\Mddo\Mdda\Mddb\Mddc\Mddd
$$

$$
\Mdde\Mddf\Mddg\Mddh\Mddi\Mdeo\Mdea\Mdeb\Mdec\Mded\Mdee\Mdef\Mdeg\Mdeh
$$

$$
\Mdei\Mdfo\Mdfa\Mdfb\Mdfc\Mdfd\Mdfe\Mdff\Mdfg\Mdfh\Mdfi\Mdgo\Mdga\Mdgb
$$

$$
\Mdgc\Mdgd\Mdge\Mdgf\Mdgg\Mdgh\Mdgi\Mdho\Mdha\Mdhb\Mdhc\Mdhd\Mdhe\Mdhf
$$

$$
\Mdhg\Mdhh\Mdhi\Mdio\Mdia\Mdib\Mdic\Mdid\Mdie\Mdif\Mdig\Mdih\Mdii\Meoo
$$
\vfill
$$
\Meoa\Meob\Meoc\Meod\Meoe\Meof\Meog\Meoh\Meoi\Meao\Meaa\Meab\Meac\Mead\Meae
$$

$$
\Meaf\Meag\Meah\Meai\Mebo\Meba\Mebb\Mebc\Mebd\Mebe\Mebf\Mebg\Mebh\Mebi\Meco
$$

$$
\Meca\Mecb\Mecc\Mecd\Mece\Mecf\Mecg\Mech\Meci\Medo\Meda\Medb\Medc\Medd
$$

$$
\Mede\Medf\Medg\Medh\Medi\Meeo\Meea\Meeb\Meec\Meed\Meee\Meef\Meeg\Meeh
$$

$$
\Meei\Mefo\Mefa\Mefb\Mefc\Mefd\Mefe\Meff\Mefg\Mefh\Mefi\Mego\Mega\Megb
$$

$$
\Megc\Megd\Mege\Megf\Megg\Megh\Megi\Meho\Meha\Mehb\Mehc\Mehd\Mehe\Mehf
$$

$$
\Mehg\Mehh\Mehi\Meio\Meia\Meib\Meic\Meid\Meie\Meif\Meig\Meih\Meii\Mfoo
$$
\vfill
$$
\hskip -3mm\Mfoa\Mfob\Mfoc\Mfod\Mfoe\Mfof\Mfog\Mfoh\Mfoi\Mfao\Mfaa\Mfab\Mfac\Mfad\Mfae
$$

$$
\hskip -5mm\Mfaf\Mfag\Mfah\Mfai\Mfbo\Mfba\Mfbb\Mfbc\Mfbd\Mfbe\Mfbf\Mfbg\Mfbh\Mfbi\Mfco
$$

$$
\Mfca\Mfcb\Mfcc\Mfcd\Mfce\Mfcf\Mfcg\Mfch\Mfci\Mfdo\Mfda\Mfdb\Mfdc\Mfdd
$$

$$
\Mfde\Mfdf\Mfdg\Mfdh\Mfdi\Mfeo\Mfea\Mfeb\Mfec\Mfed\Mfee\Mfef\Mfeg\Mfeh
$$

$$
\Mfei\Mffo\Mffa\Mffb\Mffc\Mffd\Mffe\Mfff\Mffg\Mffh\Mffi\Mfgo\Mfga\Mfgb
$$

$$
\Mfgc\Mfgd\Mfge\Mfgf\Mfgg\Mfgh\Mfgi\Mfho\Mfha\Mfhb\Mfhc\Mfhd\Mfhe\Mfhf
$$

$$
\Mfhg\Mfhh\Mfhi\Mfio\Mfia\Mfib\Mfic\Mfid\Mfie\Mfif\Mfig\Mfih\Mfii\Mgoo
$$
\eject
$$
\Mgoa\Mgob\Mgoc\Mgod\Mgoe\Mgof\Mgog\Mgoh\Mgoi\Mgao\Mgaa\Mgab\Mgac
$$

$$
\Mgad\Mgae\Mgaf\Mgag\Mgah\Mgai\Mgbo\Mgba\Mgbb\Mgbc\Mgbd\Mgbe\Mgbf
$$

$$
\Mgbg\Mgbh\Mgbi\Mgco\Mgca\Mgcb\Mgcc\Mgcd\Mgce\Mgcf\Mgcg\Mgch\Mgci
$$

$$
\Mgdo\Mgda\Mgdb\Mgdc\Mgdd\Mgde\Mgdf\Mgdg\Mgdh\Mgdi\Mgeo\Mgea\Mgeb
$$

$$
\Mgec\Mged\Mgee\Mgef\Mgeg\Mgeh\Mgei\Mgfo\Mgfa\Mgfb\Mgfc\Mgfd
$$

$$
\Mgfe\Mgff\Mgfg\Mgfh\Mgfi\Mggo\Mgga\Mggb\Mggc\Mggd\Mgge\Mggf
$$

$$
\Mggg\Mggh\Mggi\Mgho\Mgha\Mghb\Mghc\Mghd\Mghe\Mghf\Mghg\Mghh
$$

$$
\Mghi\Mgio\Mgia\Mgib\Mgic\Mgid\Mgie\Mgif\Mgig\Mgih\Mgii\Mhoo
$$
\vfill
$$
\Mhoa\Mhob\Mhoc\Mhod\Mhoe\Mhof\Mhog\Mhoh\Mhoi\Mhao\Mhaa\Mhab\Mhac
$$

$$
\Mhad\Mhae\Mhaf\Mhag\Mhah\Mhai\Mhbo\Mhba\Mhbb\Mhbc\Mhbd\Mhbe\Mhbf
$$

$$
\Mhbg\Mhbh\Mhbi\Mhco\Mhca\Mhcb\Mhcc\Mhcd\Mhce\Mhcf\Mhcg\Mhch\Mhci
$$

$$
\Mhdo\Mhda\Mhdb\Mhdc\Mhdd\Mhde\Mhdf\Mhdg\Mhdh\Mhdi\Mheo\Mhea\Mheb
$$

$$
\Mhec\Mhed\Mhee\Mhef\Mheg\Mheh\Mhei\Mhfo\Mhfa\Mhfb\Mhfc\Mhfd
$$

$$
\Mhfe\Mhff\Mhfg\Mhfh\Mhfi\Mhgo\Mhga\Mhgb\Mhgc\Mhgd\Mhge\Mhgf
$$

$$
\Mhgg\Mhgh\Mhgi\Mhho\Mhha\Mhhb\Mhhc\Mhhd\Mhhe\Mhhf\Mhhg\Mhhh
$$

$$
\Mhhi\Mhio\Mhia\Mhib\Mhic\Mhid\Mhie\Mhif\Mhig\Mhih\Mhii\Mioo
$$
\vfill
$$
\Mioa\Miob\Mioc\Miod\Mioe\Miof\Miog\Mioh\Mioi\Miao\Miaa\Miab\Miac
$$

$$
\Miad\Miae\Miaf\Miag\Miah\Miai\Mibo\Miba\Mibb\Mibc\Mibd\Mibe\Mibf
$$

$$
\Mibg\Mibh\Mibi\Mico\Mica\Micb\Micc\Micd\Mice\Micf\Micg\Mich\Mici
$$

$$
\Mido\Mida\Midb\Midc\Midd\Mide\Midf\Midg\Midh\Midi\Mieo\Miea\Mieb
$$

$$
\Miec\Mied\Miee\Mief\Mieg\Mieh\Miei\Mifo\Mifa\Mifb\Mifc\Mifd
$$

$$
\Mife\Miff\Mifg\Mifh\Mifi\Migo\Miga\Migb\Migc\Migd\Mige\Migf
$$

$$
\Migg\Migh\Migi\Miho\Miha\Mihb\Mihc\Mihd\Mihe\Mihf\Mihg\Mihh
$$

$$
\Mihi\Miio\Miia\Miib\Miic\Miid\Miie\Miif\Miig\Miih\Miii\Maooo
$$

\section*{Appendice C : un exemple d'appariement des nombres de $1$ \`a $1000$ sans facteurs carr\'es}
L'appariement se fait par ordre d\'ecroissant, en appliquant au nombre choisi $n$ une des op\'erations de coupe ou de fusion des racines d\'ecrites pr\'ec\'edemment, afin de l'apparier avec un nombre $n'$ plus petit que lui, tel que $\mu(n)+\mu(n')=0$. Les coupes au-dessus d'un sommet autre que la racine sont licites~:
\[\Mag\mapsto\Mac\mapsto\Mao,\hskip 6mm \Mei\mapsto\Mda\mapsto\Mbi\mapsto\Mbb, \hskip 6mm \Mei\mapsto\Mdc\mapsto\Mba,\hskip 6mm\Mgc\mapsto\Mfa\mapsto \Mdf,\]
\`a condition d'\'eviter les situations d\'eriv\'ees des deux exceptions mentionn\'ees ci-dessus. Par exemple
\[\Mcg < \Mch, \hskip 8mm \Mhi < \Maof,\hskip 8mm \genfrac {}{}{0pt}{1}{\matulaiglarge \hskip -10.7pt \matulaagmince}{1039} < \genfrac {}{}{0pt}{1}{\matulacdi\ \matulac}{1047}.\]

$20$ nombres sont rest\'es seuls soit \`a dessein, soit parce que tous les candidats \`a l'appariement \'etaient d\'ej\`a indisponibles. Les nombres contenant au moins un facteur carr\'e sont appari\'es avec eux-mêmes,  et ne sont pas repr\'esent\'es.

\def\pair #1#2{\genfrac{}{}{0pt}{0}{\scalebox{0.6}{$#1$}}{\scalebox{0.6}{$#2$}}\hskip 1.5mm}
\def\vers #1#2{\scalebox{0.6}{$#1\mapsto \hskip -3mm #2$}}
\begin{align*}
&\vers {\Miih}{\Mdii} &&\vers \Miig\Mghi &&\vers\Miie\Mghg &&\vers\Miid\Mdig &&\vers\Miic\Mgeg &&\vers\Miia\Mecg&&\vers\Mihi\Mgoa &&\vers\Mihg\Mgia \\
&\vers\Mihf\Mdic &&\vers\Mihe\Mgfa &&\vers\Mihc\Mbdi &&\vers\Mihb\Mdia&&\vers\Migi\Mfei &&\vers\Migh\Mdhi&&\vers\Migg\Mgga &&\vers\Migd\Mdhg\\
&\vers\Migc\Mgfi &&\vers\Miga\Mbh &&\vers\Migo\Mdhe &&\vers\Mifi\Mioa &&\vers\Mifg\Mie &&\vers\Miff\Mdhc&&\vers\Mife\Mgdc &&\vers\Mifb\Mdha\\
&\vers\Miei\Mfea &&\vers\Mieh\Mdgi &&\vers\Mieg\Mgha &&\vers\Miee\Mgbg &&\vers\Miec\Mgec &&\vers\Miea\Mdbi&&\vers\Midi\Mffc &&\vers\Midg\Meha\\
&\vers\Midf\Mdgc &&\vers\Midc\Mfdc&&\vers\Midb\Mdga &&\vers\Mida\Mfee&&\vers\Mici\Mgcc &&\vers\Mich\Mdfi &&\vers\Micg\Mgbc &&\vers\Mice\Mhoc\\
&\vers\Micd\Mdfg &&\vers\Micc\Mgai&&\vers\Mico\Mdfe &&\vers\Mibi\Mbcg &&\vers\Mibf\Mdfc &&\vers\Mibc\Mfei &&\vers\Mibb\Mdfa &&\vers\Miba\Mfah\\
&\vers\Miai\Magg &&\Miag &&\vers\Miae\Mgee &&\vers \Miad\Mdeg &&\vers\Miac\Mfca &&\vers\Miaa\Mgag&&\vers\Miao\Mdee &&\vers\Miog\Mfce\\
&\vers\Miof\Mdec &&\vers\Mioe\Mfde &&\vers\Mioc\Mgdi &&\vers\Miob\Mdea&&\vers\Mhii\Mfoa &&\vers\Mhih\Mddi &&\vers\Mhig\Mgic &&\vers\Mhie\Mfgg\\
&\vers\Mhid\Mddg &&\vers\Mhic\Mfbg&&\vers\Mhio\Mdde &&\vers\Mhhi\Mfhc &&\vers\Mhhg\Meec &&\vers\Mhhf\Mddc &&\vers\Mhhe\Mfie &&\vers\Mhhc\Mfii\\
&\Mhha &&\vers\Mhgi\Mgfb &&\vers\Mhgh\Mdci &&\vers\Mhgg\Mcob &&\vers\Mhgd\Mdcg &&\vers\Mhga\Mfai&&\vers\Mhff\Mdcc &&\vers\Mhfe\Mgao\\
&\vers\Mhfc\Mfhg &&\vers\Mhfb\Mdca&&\vers\Mhgo\Mdce &&\vers\Mhfi\Mfhb&&\vers\Mhfa\Mgog &&\vers\Mhei\Mbie &&\vers\Mheh\Mhad &&\vers\Mheg\Mcba\\
&\vers\Mhed\Mdbg &&\vers\Mhec\Mfha&&\vers\Mhea\Meic &&\vers\Mhdi\Mfgc &&\vers\Mhdc\Meee &&\vers\Mhdb\Mdba &&\vers\Mhci\Mbai &&\vers\Mhch\Mdai\\
&\vers\Mhce\Mfdc &&\vers\Mhcd\Mdag &&\vers\Mhca\Mfdg &&\vers\Mhco\Mdae &&\vers\Mhbi\Mede &&\vers\Mhbg\Mffi&&\vers\Mhbf\Mdac &&\vers\Mhbc\Mdoc\\
&\vers\Mhbb\Mdaa &&\vers\Mhba\Mbae &&\vers\Mhah\Mdoi &&\vers\Mhag\Mfac&&\vers\Mhae\Mbhe &&\vers\Mhac\Maie &&\vers\Mhaa\Mbce &&\vers\Mhoi\Mece\\
&\vers\Mhog\Mdch &&\vers\Mhof\Mfcd &&\vers\Mhoe\Mgba &&\vers\Mhob\Mdoa &&\vers\Mgii\Mceg &&\vers\Mgih\Mcii &&\vers\Mgig\Mhe &&\vers\Mgie\Mfhi\\
&\vers\Mgid\Mcig &&\vers\Mgio\Mcie &&\vers\Mghf\Mcic &&\vers\Mghe\Mfog &&\vers\Mghb\Mcia &&\vers\Mggi\Mefi&&\vers\Mggh\Mchi &&\vers\Mggg\Mfbc
\end{align*}
\begin{align*}
&\vers\Mggc\Mcha &&\vers\Mggo\Mche &&\vers\Mgfg\Meeg &&\vers\Mgff\Mchc &&\vers\Mgfc\Mfda &&\vers\Mgei\Mfga &&\vers\Mgeh\Mcgi &&\vers\Mged\Mcgg &&\\
&\vers\Mgea\Mcbc &&\vers\Mgdf\Mcgc &&\vers\Mgde\Mega &&\vers\Mgdb\Mcga &&\vers\Mgda\Mgoc &&\Mgci &&\vers\Mgcg\Mfgo &&\vers\Mgcd\Mcfg \\
&\vers\Mgca\Meba &&\vers\Mgco\Mcfe &&\vers\Mgah\Mcei &&\vers\Mgae\Mfaa &&\vers\Mgad\Mfdf &&\vers\Mgac\Mebc &&\vers\Mgoi\Mic &&\vers\Mgof\Mcec\\
&\vers\Mgoe\Meia &&\vers\Mfih\Mcdi &&\vers\Mfig\Mbee &&\vers\Mfid\Mcdg && \Mfia &&\vers\Mfio\Mcde &&\vers\Mfhe\Mebc &&\vers\Mfgi\Meda \\
&\vers\Mfgh\Mcci &&\vers\Mfgd\Mccg &&\vers\Mffg\Mcde &&\vers\Mffe\Mego &&\vers\Mffc\Meei &&\vers\Mffb\Mcca &&\vers\Mffa\Mcda &&\vers\Mfeh\Mcbi\\
&\vers\Mfed\Mcbg &&\vers\Mfdi\Meio&&\vers\Mfdb\Mebf &&\vers\Mfda\Mboc &&\vers\Mfch\Mcai &&\vers\Mfcd\Mcag &&\vers\Mfbi\Mddb &&\vers\Mfbf\Mcac\\
&\vers\Mfbb\Mcaa &&\vers\Mfag\Mboe &&\vers\Mfae\Meoa &&\vers\Mfad\Mcog &&\vers\Mfao\Meob &&\vers\Mfoi\Meea &&\vers\Mfof\Mcoc&&\vers\Mfob\Mcoa\\
&\vers\Mfoa\Mbgd &&\vers\Meii\Mee &&\vers\Meih\Mbii &&\vers\Meig\Mdih &&\vers\Meie\Meaa &&\vers\Meid\Mbig &&\vers\Mehi\Mdcd & &\vers\Mehg\Maai\\
&\vers\Mehf\Mbic &&\vers\Mehc\Mdah&& \vers\Mehb\Mbia &&\vers\Megi\Mbca &&\vers\Megg\Maei &&\vers\Megd\Mbao &&\vers\Megc\Maoe &&\vers\Meff\Mbhc\\
&\vers\Mefe\Mdgo &&\vers\Mefc\Mbdi &&\vers\Mefb\Mbha&&\vers\Mefa\Meao &&\vers\Meed\Mbgg &&\vers\Medg\Mabc &&\vers\Medf\Mbgc &&\vers\Mede\Mcao\\
&\vers\Medc\Mcff &&\vers\Medb\Mbga &&\vers\Mech\Mbfi &&\vers\Mecg\Mbdf &&\vers\Mece\Mdco &&\vers\Mecc\Mdao &&\vers\Meco\Mbfe &&\vers\Meai\Mdbf\\
&\vers\Meah\Mbei &&\Meag &&\Meae &&\vers\Mead\Mbeg &&\vers\Meoi\Maee &&\vers\Meof\Mbec &&\vers\Meoe\Mbio &&\vers\Meoc\Mddf\\
&\vers\Mdid\Mbdg&&\vers\Mdic\Mcgd &&\vers\Mdhb\Mbda &&\vers\Mdha\Mcgo &&\vers\Mdgh\Mahb &&\vers\Mdff\Mbcc &&\vers\Mdfb\Mdah &&\vers\Mdeh\Mbbi\\
&\vers\Mded\Mbbg &&\vers\Mddf\Mbbc &&\vers\Mddb\Mbba &&\vers\Mdbb\Mbaa &&\vers\Mdad\Mbog &&\vers\Mdog\Mbhf &&\vers\Mdof\Mcdf &&\vers\Mdob\Mboa\\
&\vers\Mcih\Maii &&\vers\Mcid\Maig &&\Mcio &&\vers\Mchb\Maia &&\vers\Mcfb\Maha &&\vers\Mceh\Magi &&\vers\Mced\Mbgh &&\vers\Mcdf\Magc\\
&\vers\Mcce\Mbfi &&\vers\Mccd\Mafg &&\vers\Mcco\Mafe &&\vers\Mcbi\Mbhb &&\vers\Mcbf\Mafc &&\vers\Mcbb\Mafa &&\vers\Mcad\Maeg &&\vers\Mcoi\Mbea\\
&\vers\Mcoe\Mbco &&\vers\Mbih\Madi &&\vers\Mbgh\Maob &&\vers\Mbfg\Mbbb &&\vers\Mbff\Macc &&\vers\Mbfc\Mbad &&\vers\Mbfb\Maca &&\vers\Mbeh\Mabi\\
&\vers\Mbed\Mabg &&\vers\Mbdg\Maio &&\vers\Mbda\Maof && \Mbch  &&\vers\Mbco\Maae &&\vers\Mbbf\Maac &&\vers\Mbah\Maoi &&\vers\Mbag\Maed\\
&\vers\Mbac\Magd &&\Mboi &&\vers\Mbof\Maoc &&\vers\Mbob\Mgh &&\vers\Maid\Mig &&\vers\Mahg\Mago && \Mahf &&\vers\Mahe\Maco\\
&\vers\Mahc\Maea &&\vers\Maff\Mhc &&\vers\Mafe\Made &&\vers\Madf\Mgc &&\vers\Madc\Maao &&\vers\Madb\Mga &&\Mada &&\vers\Maci\Maah\\
&\vers\Mach\Mfi &&\Macg &&\vers\Macd\Mfg &&\vers\Mabb\Mfa &&\vers\Maad\Meg &&\vers\Maaa\Mhi &&\vers\Maog\Mhf &&\vers\Maoa\Mhb\\
&\vers\Mid\Mdg &&\vers\Mia\Mgo &&\vers\Mhg\Mff &&\vers\Mgi\Mfb &&\Mgg &&\vers\Mgd\Mcg && \Mfi && \Mfe\\
&\vers\Mei\Mcd &&\vers\Meh\Mbi &&\vers\Mec\Mch &&\vers\Mea\Mdc &&\vers\Mdf\Mbc &&\vers\Mdb\Mba &&\vers\Mda\Mbf &&\vers\Mci\Mco\\
&\Mcg && \Mce &&\vers\Mcc\Mbi &&\vers\Mca\Mbb &&\vers\Mai\Mad &&\vers\Mag\Mao &&\vers\Mae\Mac \hskip 3mm \Maa\hskip -3mm && \vers\Mg\Mf  \Me  \Mc \vers\Mb\Ma
\end{align*}
\section*{Appendice D : de 1 \`a 199}
\begin{align*}
&\vers\Maii\Maff &&\vers\Maig\Mada &&\vers\Maie\Mahc&&\vers\Maid\Maao &&\vers\Maic\Maeh &&\vers\Maia\Maah &&\vers\Maio\Magh &&\vers\Mahg\Mago\\
&\vers\Mahf\Mic &&\vers\Mahe\Maco  &&\vers\Mahb\Mia &&\vers\Maha\Madf &&\vers\Magi\Mhb &&\vers\Magg\Maci &&\vers\Magd\Madb &&\vers\Magc\Mie\\
&\vers\Mafg\Mabc &&\vers\Mafe\Made &&\vers\Mafc\Macd &&\vers\Mafa\Mach &&\vers\Maei\Maca &&\vers\Maeg\Mgd &&\vers\Maee\Macg &&\vers\Maed\Mgg\\
&\vers\Maea\Mabb &&\vers\Madi\Mid  &&\vers\Madc\Maac &&\vers\Macc\Maad &&\vers\Mabi\Maog &&\vers\Mabg\Mfb &&\vers\Maai\Mgo &&\vers\Maae\Maoc \\
&\vers\Maaa\Mhi &&\vers\Maoi\Meh &&\vers\Maof\Mec &&\Maoe &&\vers\Maob\Mhf &&\vers\Maoa\Mci &&\vers\Mig\Mee &&\vers\Mhg\Mff\\
&\vers\Mhe\Mgc &&\vers\Mhc\Mdf &&\vers\Mgi\Mcc &&\Mgh &&\vers\Mga\Mce &&\vers\Mfi\Mfa &&\vers\Mfg\Mch &&\Mfe \\
&\vers\Mei\Mcd &&\vers\Meh\Mco &&\vers\Meg\Mdb &&\vers\Mea\Mdc &&\Mdg  &&\vers\Mda\Mbf &&\vers\Mcg\Mba && \vers\Mca\Mbb\\
&\vers\Mco\Mae&& \Mbi &&\Mbc &&\vers\Mai\Mad &&\Mag\ &&\vers\Mac\Mao && \Maa &&\vers\Mg\Mf\\
&\Me &&\Mc &&\vers\Mb\Ma
\end{align*}
\section*{Appendice E : de 1 \`a 96 avec les facteurs carr\'es}
Dans cet exemple, tous les nombres de $1$ \`a $96$ ont pu \^etre rang\'es par paires. Rappelons que $96$ est le dernier z\'ero de la fonction sommatoire de Liouville avant $906\, 150\, 256$.
\begin{align*}
&\vers\Mif\Mdh &&\vers\Mie\Mhi &&\vers \Mid\Mdg &&\vers\Mic\Mgi &&\vers\Mib\Mdf &&\vers\Mia\Mgh &&\vers\Mio\Mde &&\vers\Mhh\Mho\\
&\vers\Mhg\Mga &&\vers\Mhf\Mfh &&\vers\Mhe\Mgc &&\vers\Mhd\Mgf &&\vers\Mhc\Mcc &&\vers\Mhb\Mda &&\vers\Mha\Mfc &&\vers\Mgg\Mff\\
&\vers\Mge\Mfi &&\vers\Mgd\Mcg &&\vers\Mgb\Mcf &&\vers\Mgo\Mce &&\vers\Mfg\Mch &&\vers\Mfe\Mfa &&\vers\Mfd\Mcb &&\vers\Mfb\Mca\\
&\vers\Mfo\Meb &&\vers\Mei\Mcd &&\vers\Meh\Mdd &&\vers\Meg\Mec &&\vers\Mef\Mbh &&\vers\Mee\Meo &&\vers\Med\Mbg &&\vers\Mea\Mdc\\
&\vers\Mdi\Mdb &&\vers\Mdo\Mbo&&\vers\Mci\Mco &&\vers\Mbi\Mae &&\vers\Mbf\Mah &&\vers\Mbe\Mbc &&\vers\Mbd\Mab &&\vers\Mbb\Maa\\
&\vers\Mba\Mai &&\vers\Mag\Mad &&\vers\Maf\Mh &&\vers\Mac\Mi &&\vers\Mao\Me &&\vers\Mg\Mf &&\vers\Md\Mb &&\vers\Mc\Ma
\end{align*}
\ignore{
Ou alors :
\begin{align*}
&\vers\Mif\Mdh &&\vers\Mie\Mhi &&\vers \Mid\Mff &&\vers\Mic\Mgi && \vers\Mib\Mfo &&\vers\Mia\Mgh &&\vers\Mio\Mgo &&\vers\Mhh\Mho\\
&\vers\Mhg\Mga &&\vers\Mhf\Mfh &&\vers\Mhe\Mgc &&\vers\Mhd\Mgf &&\vers\Mhc\Mdf &&\vers\Mhb\Meb &&\vers\Mha\Mfc &&\Mgg\\
&\vers\Mge\Mfi &&\vers\Mgd\Mdb &&\vers\Mgb\Mef &&\vers\Mfg\Mch &&\vers\Mfe\Mfa &&\vers\Mfd\Mcb &&\vers\Mfb\Mdd &&\vers\Mei\Mcd\\
&\vers\Meh\Mco &&\vers\Meg\Mec &&\vers\Mee\Mdg &&\vers\Med\Mbg &&\vers\Mea\Mdc &&\vers\Meo\Mbe &&\Mdi &&\vers\Mde\Mci\\
&\vers\Mda\Mbf &&\vers\Mdo\Mbo &&\vers\Mcg\Mba &&\vers\Mcf\Mbh &&\Mce &&\vers\Mcc\Mbi &&\vers\Mca\Mbb &&\vers\Mbd\Mab\\
&\vers\Mbc\Mae &&\vers\Mai\Mad &&\vers\Mah\Mi &&\vers\Mag\Mao &&\vers\Maf\Mh &&\Mac &&\Maa &&\vers\Mg\Mf\\
&\Me &&\vers\Md\Mb &&\vers\Mc\Ma
\end{align*}
}
\section*{Appendice F : de 1 \`a 1000 avec les facteurs carr\'es}
\begin{align*}
&\vers\Maooo\Meoo && \vers\Miii\Mhoa &&\vers\Miih\Mdii &&\vers\Miig\Mhff &&\vers\Miif\Mdih &&\vers\Miie\Mghg &&\vers\Miid\Mdig &&\vers\Miic\Mgeg\\
&\vers\Miib\Mdif &&\vers\Miia\Mecg &&\vers\Miio\Mdie &&\vers\Mihi\Mgoa &&\vers\Mihh\Mdid &&\vers\Mihg\Mhic &&\vers\Mihf\Mdic &&\vers\Mihe\Mgfa\\
&\vers\Mihd\Mdib &&\vers\Mihc\Mhfb &&\vers\Mihb\Mdia &&\vers\Miha\Mhac &&\vers\Miho\Mdio &&\vers\Migi\Mfei &&\vers\Migh\Mdhi &&\vers\Migg\Mead\\
&\vers\Migf\Mdhh &&\vers\Mige\Miae &&\vers\Migd\Mdhg &&\vers\Migc\Mgfi &&\vers\Migb\Mdhf &&\vers\Miga\Mhdb &&\vers\Migo\Mdhe &&\vers\Mifi\Mghi\\
&\vers\Mifh\Mdhd &&\vers\Mifg\Mffb &&\vers\Miff\Mdhc &&\vers\Mife\Mgdc &&\vers\Mifd\Mdhb &&\vers\Mifc\Mgdi &&\vers\Mifb\Mdha &&\vers\Mifa\Mffa\\
&\hskip -3mm\vers\Mifo\Mdho &&\vers\Miei\Mhbb &&\vers\Mieh\Mdgi &&\vers\Mieg\Mhda &&\vers\Mief\Mdgh &&\vers\Miee\Mgbg &&\vers\Mied\Mdgg &&\vers\Miec\Mhch\\
&\vers\Mieb\Mdgf &&\vers\Miea\Mgdc &&\vers\Mieo\Mdge &&\vers\Midi\Mgco &&\vers\Midh\Mdgd &&\vers\Midg\Meha &&\vers\Midf\Mdgc &&\vers\Mide\Mhee\\
&\vers\Midd\Mdgb &&\vers\Midc\Mfae &&\vers\Midb\Mdga &&\vers\Mida\Mhah &&\vers\Mido\Mdgo &&\vers\Mici\Mgcc &&\vers\Mich\Mdfi &&\vers\Micg\Mgbc\\
&\vers\Micf\Mdfh &&\vers\Mice\Mgde. &&\vers\Micd\Mdfg &&\vers\Micc\Mgai &&\vers\Micb\Mdff &&\vers\Mica\Miag &&\vers\Mico\Mdfe &&\vers\Mibi\Mhob\\
&\vers\Mibh\Mdfd &&\vers\Mibg\Mgec &&\vers\Mibf\Mdfc &&\vers\Mibe\Mhea &&\vers\Mibd\Mdfb &&\vers\Mibc\Mged &&\vers\Mibb\Mdfa &&\vers\Miba\Mfeg\\
&\vers\Mibo\Mdfo &&\vers\Miai\Mceh &&\vers\Miah\Mdei &&\vers\Miaf\Mdeh &&\vers\Miad\Mdeg &&\vers\Miac\Mfca &&\vers\Miab\Mdef &&\vers\Miaa\Mgid\\
&\vers\Miao\Mdee &&\vers\Mioi\Mgag &&\vers\Mioh\Mded &&\vers\Miog\Mfce &&\vers\Miof\Mdec &&\vers\Mioe\Mfde &&\vers\Miod\Mdeb &&\vers\Mioc\Mhag\\
&\vers\Miob\Mdea &&\vers\Mioa\Mfac &&\vers\Mioo\Mdeo &&\vers\Mhii\Mfoa &&\vers\Mhih\Mddi &&\vers\Mhig\Mgic &&\hskip -5mm\vers\Mhif\Mddh &&\vers\Mhie\Mfgg\\
&\vers\Mhid\Mddg &&\vers\Mhib\Mddf &&\vers\Mhia\Mghc &&\vers\Mhio\Mdde &&\vers\Mhhi\Mfhc &&\vers\Mhhh\Mddd &&\vers\Mhhg\Mggh &&\vers\Mhhf\Mddc\\
&\vers\Mhhe\Mfii &&\vers\Mhhd\Mddb &&\vers\Mhhc\Mcia &&\hskip -3mm\vers\Mhhb\Mdda &&\vers\Mhha\Mgff &&\vers\Mhho\Mddo &&\vers\Mhgi\Mgfb &&\vers\Mhgh\Mdci\\
&\vers\Mhgg\Meff &&\vers\Mhgf\Mdch &&\vers\Mhge\Mhoe &&\vers\Mhgd\Mdcg &&\vers\Mhgc\Mfhg &&\vers\Mhgb\Mdcf &&\vers\Mhga\Mfai &&\vers\Mhgo\Mdce\\
&\vers\Mhfi\Mfhb &&\vers\Mhfh\Mdcd &&\vers\Mhfg\Mgca &&\vers\Mhfe\Mgao &&\hskip -5mm\vers\Mhfd\Mdcb &&\vers\Mhfc\Mgeh &&\vers\Mhfa\Mgog &&\vers\Mhfo\Mdco\\
&\vers\Mhei\Mcee &&\vers\Mheh\Mdbi &&\vers\Mheg\Mcba &&\vers\Mhef\Mdbh &&\vers\Mhed\Mdbg &&\vers\Mhec\Mfha &&\vers\Mheb\Mdbf &&\vers\Mheo\Mdbe\\
&\vers\Mhdi\Mcfi &&\hskip -4mm\vers\Mhdh\Mdbd &&\vers\Mhdg\Mggo &&\vers\Mhdf\Mdbc &&\vers\Mhde\Mgee &&\vers\Mhdd\Mdbb &&\vers\Mhdc\Mfgh &&\hskip -2mm\vers\Mhdo\Mdbo\\
&\vers\Mhci\Mgcd &&\vers\Mhcg\Mgaa &&\vers\Mhcf\Mdah &&\vers\Mhce\Mcbe &&\vers\Mhcd\Mdag &&\vers\Mhcc\Mgdi &&\hskip -5mm\vers\Mhcb\Mdaf &&\vers\Mhca\Mfdg\\
&\vers\Mhco\Mdae &&\vers\Mhbi\Mede &&\hskip -2mm\vers\Mhbh\Mdad &&\vers\Mhbg\Mgah &&\vers\Mhbf\Mdac &&\vers\Mhbe\Mgoe &&\hskip -2mm\vers\Mhbd\Mdab
&&\vers\Mhbc\Mdoc\\
&\vers\Mhba\Mgof &&\vers\Mhbo\Mdao &&\hskip -2mm\vers\Mhai\Mgda &&\hskip -4mm\vers\Mhaf\Mdoh &&\vers\Mhae\Mfgo &&\vers\Mhad\Mdog &&\vers\Mhab\Mdof
&&\vers\Mhaa\Mbce\\
&\vers\Mhao\Mdoe &&\vers\Mhoi\Mfid &&\vers\Mhoh\Mdod &&\vers\Mhog\Mfoc &&\vers\Mhof\Mfcd &&\vers\Mhod\Mdob &&\vers\Mhoc\Mega &&\hskip -4mm\vers\Mhoo\Mdoo\\
&\vers\Mgii\Mceg &&\vers\Mgih\Mcii &&\vers\Mgig\Meed &&\vers\Mgif\Mcih &&\vers\Mgie\Mfee &&\vers\Mgib\Mcif &&\vers\Mgia\Mfeh &&\vers\Mgio\Mcie\\
&\vers\Mghh\Mcid &&\vers\Mghf\Mcic &&\vers\Mghe\Mfog &&\hskip -6mm\vers\Mghd\Mcib &&\vers\Mghb\Mfad &&\vers\Mgha\Meda &&\hskip -3mm\vers\Mgho\Mcio
&&\vers\Mggi\Mefi
\end{align*}
\begin{align*}
&\vers\Mggg\Mgoc &&\vers\Mggf\Mchh &&\vers\Mgge\Mfhe &&\vers\Mggd\Mchg &&\vers\Mggc\Mcha &&\vers\Mggb\Mchf &&\vers\Mgga\Mcfc &&\hskip -7mm\vers\Mgfh\Mchd\\
&\vers\Mgfg\Meeg &&\vers\Mgfe\Mffc &&\vers\Mgfd\Mchb &&\vers\Mgfc\Mfed &&\hskip -4mm\vers\Mgfo\Mcho &&\hskip -4mm\vers\Mgef\Mcgh &&\hskip -4mm\vers \Mgeb\Mcgf
&&\vers\Mgea\Mcbc\\
&\vers\Mgeo\Mcge &&\vers\Mgdh\Mcgd &&\vers\Mgdg\Meig &&\vers\Mgdf\Mcgc &&\Mgde &&\vers\Mgdd\Mcgb &&\hskip -3mm\vers\Mgdb\Mcga &&\vers\Mgdo\Mcgo\\
&\vers\Mgci\Mbfb &&\vers\Mgch\Mfof &&\Mgcg &&\hskip -5mm\vers\Mgcf\Mcfh &&\hskip -3mm\vers\Mgce\Mfco &&\vers\Mgcc\Mfbf &&\vers\Mgcb\Mcff &&\hskip -4mm\vers\Mgbi\Mefg\\
&\hskip -4mm\vers\Mgbh\Mcfd &&\vers\Mgbf\Mehb &&\vers\Mgbe\Mffg &&\vers\Mgbd\Mcfb &&\vers\Mgbb\Mcfa &&\vers\Mgba\Meic &&\hskip -3mm\vers\Mgbo\Mcfo &&\vers\Mgaf\Mbdh\\
&\vers\Mgae\Mefe &&\vers\Mgad\Mceg &&\vers\Mgac\Mebc &&\vers\Mgab\Mcef &&\vers\Mgoi\Mbed &&\vers\Mgoh\Mced &&\hskip -3mm\vers\Mgod\Mceb &&\vers\Mgob\Mcea\\
&\vers\Mgoo\Mceo &&\vers\Mfih\Mcdi &&\vers\Mfig\Mbee &&\vers\Mfif\Mcdh &&\vers\Mfie\Meio &&\vers\Mfic\Mfbg &&\vers\Mfib\Mcdf &&\Mfia\\
&\vers\Mfio\Mcde &&\vers\Mfhi\Meoc &&\vers\Mfhh\Mcdd &&\vers\Mfhf\Mcdc&&\vers\Mfhe\Mbge &&\vers\Mfhd\Mcdb &&\vers\Mfho\Mcdo &&\Mfgi\\
&\vers\Mfgf\Mcch &&\vers\Mfge\Mfba &&\vers\Mfgd\Mccg &&\vers\Mfgc\Mahc &&\hskip -4mm \vers\Mfgb\Mccf &&\vers\Mfga\Mfao &&\vers\Mffi\Mecd &&\vers\Mffh\Mccd\\
&\vers\Mfff\Mccc &&\vers\Mffe\Mego &&\vers\Mffd\Mccb &&\vers\Mffo\Mcco &&\hskip -4mm\vers\Mfef\Mcbh &&\vers\Mfec\Mbhi &&\vers\Mfeb\Mcbf &&\vers\Mfea\Mehi\\
&\vers\Mfeo\Mfao &&\vers\Mfdi\Mbdb &&\hskip -2mm\vers\Mfdh\Mcbd &&\vers\Mfdg\Magg &&\vers\Mfdf\Mebf &&\vers\Mfdd\Mcbb &&\vers\Mfdb\Meaf &&\vers\Mfda\Medb\\
&\hskip -6mm\vers\Mfdo\Mcbo &&\vers\Mfci\Meai &&\vers\Mfch\Mcai &&\vers\Mfcg\Mfbc &&\vers\Mfcf\Mcah &&\vers\Mfcb\Mcaf &&\vers\Mfbi\Meah &&\vers\Mfbh\Mcad\\
&\vers\Mfbe\Mege &&\hskip -4mm\vers\Mfbd\Mcab &&\vers\Mfbb\Mcaa &&\vers\Mfah\Mcoi &&\vers\Mfag\Mboe &&\vers\Mfaf\Mcoh &&\vers\Mfab\Mcof &&\Mfaa\\
&\vers\Mfoi\Meea &&\hskip -4mm\vers\Mfoh\Mcod &&\vers\Mfoe\Meag &&\vers\Mfod\Mcob &&\vers\Mfob\Mcoa &&\vers\Mfoo\Mcoo &&\vers\Meii\Mbah &&\vers\Meih\Mbii\\
&\vers\Meig\Mbog &&\vers\Meif\Mbih &&\vers\Meie\Mece &&\vers\Meid\Mbig &&\hskip -2mm\vers\Meib\Mbif &&\Meia &&\hskip -2mm \vers\Mehh\Mbid &&\vers\Mehg\Mbad\\
&\vers\Mehf\Mbic &&\vers\Mehe\Medi &&\vers\Mehd\Mbib &&\vers\Mehc\Meco &&\vers\Meho\Mbio &&\vers\Megi\Mbhe &&\Megh &&\vers\Megg\Maei\\
&\hskip -6mm\vers\Megf\Mbhh &&\vers\Megd\Mbhg &&\vers\Megc\Mbeh &&\vers\Megb\Mbhf &&\vers\Mefi\Mbdg &&\vers\Mefh\Mbhd &&\Mefe &&\vers\Mefd\Mbhb\\
&\vers\Mefc\Mbof &&\vers\Mefb\Mbha &&\vers\Mefa\Meao &&\hskip -2mm\vers\Mefo\Mbho &&\vers\Meeh\Mbgi &&\vers\Meef\Mbgh &&\vers\Meec\Mbca &&\vers\Meeb\Mbgf\\
&\vers\Meeo\Mbge &&\vers\Medh\Mbgd &&\vers\Medg\Mbob &&\vers\Medf\Mbgc &&\hskip -4mm\vers\Medd\Mbgb &&\Medc &&\hskip -2mm\vers\Medo\Mbgo
&&\Meci\\
&\vers\Mech\Maio &&\vers\Mecf\Mbfh &&\vers\Mecc\Mcig &&\vers\Mecb\Mbff &&\vers\Meca\Maih &&\vers\Mebi\Mdai &&\vers\Mebh\Mbfd&&\vers\Mebg\Mchi\\
&\Mebe &&\Mebd &&\vers\Mebb\Mbfa &&\Meba &&\vers\Mebo\Mbfo &&\Meae &&\Meac &&\hskip -7mm\vers\Meab\Mbef\\
&\vers\Meaa\Mbde &&\vers\Meoi\Maid &&\vers\Meoh\Macb &&\Meog &&\vers\Meof\Mbec &&\vers\Meoe\Maie &&\hskip -3mm\vers\Meod\Mbeb &&\vers\Meob\Mbea
 \end{align*}
\begin{align*}
&\Meoa &&\hskip -4mm\vers\Mdcb\Mbaf &&\vers\Mdca\Mafb &&\vers\Mdba\Mabc &&\vers\Mdaa\Mafe &&\Mdoi &&\vers\Mdoa\Maeh &&\vers\Mche\Mcbi\\
 &\Mchc &&\vers\Mcgi\Mbia &&\Mcgg &&\vers\Mcfg\Ma &&\vers\Mcfe\Mcog &&\vers\Mcei\Mbfg &&\vers\Mcec\Madb &&\vers\Mcda\Mbeg\\
 &\vers\Mcci\Magd &&\vers\Mcce\Mbfi &&\vers\Mcca\Macd &&\vers\Mcbg\Mbga &&\Mcae &&\vers\Mcac\Madc &&\vers\Mcao\Mbbo &&\vers\Mcoe\Mbco\\
 &\vers\Mcoc\Mbdf &&\vers\Mbhc\Maff &&\vers\Mbgg\Maah &&\vers\Mbfc\Macc &&\vers\Mbeo\Mabe &&\vers\Mbdi\Maii &&\vers\Mbdd\Mabb &&\vers\Mbdc\Mahi\\
 &\vers\Mbda\Maof &&\vers\Mbdo\Mabo &&\vers\Mbci\Mia &&\vers\Mbch\Maai &&\vers\Mbcg\Maic &&\vers\Mbcf\Macf &&\vers\Mbcd\Maag &&\vers\Mbcc\Magg\\
 &\vers\Mbcb\Maaf &&\vers\Mbbi\Made &&\vers\Mbbh\Maad &&\Mbbg &&\vers\Mbbf\Maac &&\vers\Mbbe\Mage &&\hskip -4mm\vers\Mbbd\Maab &&\vers\Mbbc\Magh\\
 &\vers\Mbbb\Maaa &&\vers\Mbba\Maha &&\vers\Mbai\Maec &&\vers\Mbag\Mahf &&\vers\Mbae\Mago &&\vers\Mbac\Magc &&\vers\Mbab\Maeb &&\vers\Mbaa\Mid\\
 &\vers\Mbao\Maoe &&\vers\Mboi\Maed &&\vers\Mboh\Maod &&\vers\Mbod\Maob &&\Mboc &&\vers\Mboa\Mafc &&\vers\Mboo\Maoo &&\vers\Maig\Maae
 \end{align*}
 \begin{align*}
 &\hskip -3mm\vers\Maif\Mih &&\hskip -5mm\vers\Maib\Mif &&\vers\Maia\Mhf &&\Mahh &&\vers\Mahg\Madi &&\vers\Mahe\Maea &&\vers\Mahd\Mib &&\vers\Mahb\Maef\\
 &\vers\Maho\Mio &&\vers\Magi\Mhb &&\vers\Magf\Mhh &&\vers\Magb\Mhd &&\vers\Maga\Maei &&\vers\Mafi\Maco &&\vers\Mafh\Madd &&\vers\Mafg\Mfe\\
 &\vers\Mafe\Mada &&\vers\Mafd\Mfo &&\vers\Mafa\Mach &&\vers\Mafo\Mho &&\vers\Maeg\Mgd &&\vers\Maeo\Mge &&\Madh && \vers\Madg\Mabf\\
 &\vers\Mado\Mgo &&\vers\Maci\Mea &&\vers\Macg\Mic &&\Mace &&\Maca &&\vers\Mabi\Maoa &&\hskip -5mm\vers\Mabh\Mfd &&\vers\Mabg\Mfb\\
 &\vers\Mabf\Mfc &&\Mabd &&\vers\Maba\Mig &&\vers\Maao\Mee &&\vers\Maoi\Meh &&\vers\Maoh\Med &&\vers\Maog\Mdi &&\vers\Maoc\Mfi\\
 &\vers\Mii\Mgg &&\vers\Mie\Mhi &&\vers\Mhg\Mga &&\vers\Mhe\Mgc &&\vers\Mhc\Mdf &&\Mha &&\vers\Mgi\Mcc &&\vers\Mgh\Mci\\
 &\vers\Mgf\Mch &&\vers\Mgb\Mcf &&\vers\Mfh\Mcd &&\Mfg &&\Mff &&\vers\Mfa\Mce &&\vers\Mei\Mbb &&\vers\Meg\Mec\\
 &\vers\Mef\Mbh &&\vers\Meb\Mbf &&\vers\Meo\Mbe &&\vers\Mdh\Mbd &&\Mdg &&\Mde &&\vers\Mdd\Mdo &&\vers\Mdc\Mba\\
 &\Mdb &&\vers\Mda\Mae &&\Mcg &&\vers\Mcb\Maf &&\Mca &&\Mco &&\Mbi &&\Mbg\\
 &\Mbc &&\vers\Mbo\Mao &&\vers\Mai\Mad &&\vers\Mah\Mi &&\Mag &&\Mac &&\vers\Mab\Mf &&\Maa\\
 &\vers\Mh\Md &&\Mg &&\Me &&\Mc &&\vers\Mb\Ma
\end{align*}
 \vskip 30mm
\section*{Plusieurs mani\`eres d'obtenir un nombre premier par fusion des racines}
$$\Made \Madc \Mada \longmapsto \Maac$$

$$\Mahe\  \Mahc \Mafi \Mafa \longmapsto \Maea$$

$$\genfrac{}{}{0pt}{1}{\matulaacg\ \matulabi}{3973}\hskip 3mm \genfrac{}{}{0pt}{1}{\matulagi\ \matuladg}{3713} \hskip 3mm \genfrac{}{}{0pt}{1}{\matulacag\ \matulaaa}{3487} \hskip 3mm \genfrac{}{}{0pt}{1}{\matulaaac\ \matulaca}{3403}\hskip 3mm\genfrac{}{}{0pt}{1}{\matulabeg\ \matulaac}{3341}\hskip 3mm\genfrac{}{}{0pt}{1}{\matulafoa\ \matulae}{3005}\hskip 3mm\genfrac{}{}{0pt}{1}{\matulaigg\ \matulac}{2931}\hskip 3mm 
\longmapsto p_{330}=2213$$
%

\end{document}